\numberwithin{equation}{section}
\pgfplotsset{compat=newest}
\DeclareFontFamily{U}{mathx}{\hyphenchar\font45}
\DeclareFontShape{U}{mathx}{m}{n}{
      <5> <6> <7> <8> <9> <10>
      <10.95> <12> <14.4> <17.28> <20.74> <24.88>
      mathx10
      }{}
\DeclareSymbolFont{mathx}{U}{mathx}{m}{n}
\DeclareMathAccent{\widecheck}{0}{mathx}{"71}
\DeclarePairedDelimiter{\abs}{\lvert}{\rvert}
\DeclarePairedDelimiter{\norm}{\lVert}{\rVert}
\DeclarePairedDelimiter{\pair}{\langle}{\rangle}
\DeclarePairedDelimiter{\inner}{(}{)}
\DeclareMathOperator{\dom}{\operatorname{dom}}
\DeclareMathOperator{\supp}{\operatorname{supp}}
\newcommand{\bd}[1]{\boldsymbol{#1}}  
\newcommand{\eps}{\varepsilon}
\newcommand{\Moc}{\M(\Omega,H)}
\newcommand{\mnorm}[1]{\norm{#1}_{\mathcal{M}}}
\newcommand{\uklump}{{\widehat{u}^k}}
\newcommand{\hnorm}[1]{\norm{#1}_{H}}
\newcommand{\bhnorm}[1]{\norm*{#1}_{H}}
\newcommand{\cnorm}[1]{\norm{#1}_{\mathcal{C}}}
\newcommand{\ynorm}[1]{\norm{#1}_{Y}}
\newcommand{\bynorm}[1]{\norm*{#1}_{Y}}
\newcommand{\Ob}{\mathcal{O}}
\newcommand{\C}{\mathbb{C}}
\newcommand{\N}{\mathbb{N}}
\newcommand{\R}{\mathbb{R}}
\newcommand{\de}{\mathop{}\!\mathrm{d}}
\newcommand{\Cc}{\mathcal{C}}
\newcommand{\M}{\mathcal{M}}
\def\namedlabel#1#2{\begingroup
    #2%
    \def\@currentlabel{#2}%
    \phantomsection\label{#1}\endgroup
}
\newcommand*{\kernel}{\bd{k}}
\newcommand*{\Kstar}{K^\star}
\definecolor{darkred}{rgb}{.7,0,0}
\definecolor{darkmagenta}{rgb}{.7,0,.7}
\definecolor{darkgreen}{rgb}{0,.3,.3}
\newtheorem{proposition}{Proposition}[section]
\newtheorem{theorem}[proposition]{Theorem}
\newtheorem{lemma}[proposition]{Lemma}
\newtheorem{corollary}[proposition]{Corollary}
\theoremstyle{remark}
\newtheorem{remark}{Remark}[section]
\theoremstyle{definition}
\newdimen\spthmsep \spthmsep=5pt
\newtheorem{assumption}{Assumption}[section]
\begin{document}
\title[Accelerated GCG in spaces of measures]{%
Linear convergence of accelerated \\
conditional gradient algorithms in spaces of measures%
}

\pagestyle{myheadings}

\author{Konstantin Pieper}
\address{Computer Science and Mathematics Division,
  Oak Ridge National Laboratory,
  One Bethel Valley Road, P.O. Box 2008, MS-6211,
  Oak Ridge, TN 37831 (\texttt{pieperk@ornl.gov})}
\author{Daniel Walter}
\address{Johann Radon Institute for Computational and Applied Mathematics,
  {\"O}AW, Altenbergerstra{\ss}e 69,
  4040 Linz, Austria (\texttt{daniel.walter@oeaw.ac.at})}

\thanks{
KP acknowledges funding by the US
Air Force Office of Scientific Research grant FA9550-15-1-0001 and the Laboratory Directed
Research and Development Program at Oak
Ridge National Laboratory (ORNL), managed by UT-Battelle, LLC, under Contract
No.\ DE-AC05-00OR22725 with the US Department of Energy (DOE).
The US government retains and the publisher, by accepting the article for publication,
acknowledges that the US government retains a nonexclusive, paid-up, irrevocable,
worldwide license to publish or reproduce the published form of this manuscript, or allow
others to do so, for US government purposes. DOE will provide public access to these
results of federally sponsored research in accordance with the DOE Public Access Plan
(http://energy.gov/downloads/doe-public-access-plan).
DW acknowledges support by the DFG through the International Research
Training Group IGDK 1754 ``Optimization and Numerical Analysis for Partial Differential
Equations with Nonsmooth Structures''. Furthermore, support from the TopMath Graduate
Center of TUM Graduate School at Technische Universit{\"a}t M{\"u}nchen, Germany and
from the TopMath Program at the Elite Network of Bavaria is gratefully acknowledged.
}

\date{\today}

\begin{abstract}
A class of generalized conditional gradient algorithms for the solution of optimization
problem in spaces of Radon measures is presented.
The method iteratively inserts additional Dirac-delta functions and optimizes
the corresponding coefficients. Under general assumptions, a sub-linear
$\Ob(1/k)$ rate in the objective functional is obtained, which is sharp in
most cases. To improve efficiency, one can fully resolve the
finite-dimensional subproblems occurring in each iteration of the method. We
provide an analysis for the resulting procedure: under a structural
assumption on the optimal solution, a linear $\Ob(\zeta^k)$ convergence
rate is obtained locally.
\end{abstract}

\subjclass{46E27, 65J22, 65K05, 90C25, 49M05}
\keywords{vector-valued finite Radon measures, generalized conditional gradient, sparsity,
  nonsmooth optimization}

\maketitle

\section{Introduction}
\label{sec:Introduction}

In this paper we consider generalized conditional gradient methods for sparse optimization
problems, where the optimization variable lies in a space of measures. These problems
arise in different contexts, and they are intrinsically related to certain optimization
problems in terms of the spatial location parameters and associated coefficient variables:
For the purposes of this paper, we want to find a ``sparse'' measure, which consists of a sum of Dirac
delta functions,
\begin{equation}
  \label{eq:dirac_delta}
    u = U_{\mathcal{A}}(\bd{u}) = \sum_{n=1}^{N} \bd{u}_n \delta_{x_n},
\end{equation}
  with a finite point set \(\mathcal{A} = \{\,x_n \;|\; n = 1,\ldots,N\,\} \subset \Omega\)
from a continuous candidate set
\(\Omega\) (a possibly uncountably infinite compact subset of \(\R^d\), \(d \geq 1\))
and corresponding coefficients~\(\bd{u}_n\) in a Hilbert space~\(H\)
(for instance, \(\R\), \(\C^M\), \(M \geq 1\), etc.),
and \(N \geq 0\) the cardinality of the support set.
It should be emphasized that neither the number of points, nor the coefficients
are subject to any further restrictions.
Usually, the measure \(u\) has a physical interpretation as
a number of point-wise sources or sensors in a physics-based model.
There are many applications, where
one is interested to choose \(N\), \(\bd{x} = (x_1, x_2, \ldots, x_N)\), and
\(\bd{u} = (\bd{u}_1, \bd{u}_2, \ldots, \bd{u}_N)\) to minimize a
functional of the form:
\begin{equation*}
\bd{j}_N(\bd{x},\bd{u})
 = F\left(\sum_{n=1}^N \kernel(x_n,\bd{u}_n)\right)
 + G\left(\sum_{n=1}^N \norm{\bd{u}_n}_H\right).
\end{equation*}
Here, \(F\) is a suitable design functional or quality criterion for the variable \(y = \sum_n
\kernel(x_n,\bd{u}_n)\) (which we will also refer to as observation variable), which is given
in terms of the kernel function \(\kernel\colon \Omega \times H \to Y\), and
evaluates the response of a model to the
optimization variables \(\bd{x}\) and \(\bd{u}\).
The second term, which is expressed in terms of the sum of the norms of the
coefficients (the \(\ell^1(H)\) norm of \(\bd{u}\))
models either the cost of the coefficient variable, or is
added as a regularization term to ensure that the coefficients are sufficiently small. 

Often, the functionals \(F\) and \(G\) are convex, but \(\kernel\) is linear only in the coefficients
\(\bd{u}\), but not in the location parameters \(\bd{x}\).
Thus, the corresponding optimization problem 
is not convex:
\begin{equation}
\label{eq:nonconvex}
\text{Minimize}\quad \bd{j}_N(\bd{x},\bd{u})
\quad\text{for } \bd{x} \in \Omega^N, \bd{u} \in H^N, N \geq 0,
\end{equation}
Moreover, it has a combinatorial aspect, since \(N\) is not fixed.
However, by embedding this problem into a more general formulation, a convex formulation
can be obtained.
Concretely, the sparse measure~\eqref{eq:dirac_delta} can be considered as an element of
the space of regular vector-measures \(\mathcal{M}(\Omega,H)\). Requiring \(\kernel\) to be
continuous in the coefficients, we can introduce the (integral) operator \(K\) and the total
variation norm as
\begin{equation}
\label{eq:convolution}
Ku = \sum_{n=1}^N \kernel(x_n,\bd{u}_n), 
\quad
\norm{u}_{\mathcal{M}(\Omega,H)} = \sum_{n=1}^N \norm{\bd{u}_n}_H.
\end{equation}
We refer to section~\ref{sec:Notation} for the rigorous definitions in the case of a
general measure from the space of vector measures.
Now, we can formulate the following generalized convex optimization problem:
\begin{equation}
\label{eq:convex}\tag{\ensuremath{\mathcal{P}}}
\text{Minimize}\quad j(u) = F\left(K u\right) + G\left(\norm{u}_{\mathcal{M}(\Omega, H)}\right)
\quad\text{for } u \in \mathcal{M}(\Omega,H).
\end{equation}
Note that the formulation~\eqref{eq:convex} is more general
than~\eqref{eq:nonconvex}, since not all vector measure are of the
form~\eqref{eq:dirac_delta} (in particular, the Lebesgue space
\(L^1(\Omega,H)\) is contained in \(\mathcal{M}(\Omega,H)\). However, in many cases, the solutions
of~\eqref{eq:convex} have the desired discrete sparsity structure. In particular, if \(Y\)
is a finite-dimensional space, sparse solutions with \(N \leq \dim Y\) can always be
found; see, e.g.,~\cite{carioni} or~\cite[Proposition 6.32]{Walter:2019}.
This then renders both problem formulations essentially equivalent.

Our main motivation for studying these problems is given by applications in
inverse source location~\cite{bredies2013inverse,walter2017Helmholtz},
optimal control~\cite{clason2012measure, kunisch2014measure, kunisch2016optimal, henneke},
or compressed sensing~\cite{superres,Duval2015,azais2015spike}: Here,~\eqref{eq:convex} is
of the simpler form:
\begin{align}
\label{eq:Psource}\tag{\ensuremath{\mathcal{P}_{\textrm{source}}}}
\text{Minimize}\quad \frac{1}{2}\norm{K u - y_d}^2_Y + \beta\norm{u}_{\M(\Omega, H)}
\quad\text{for } u \in \M(\Omega, H),
\end{align}
where \(u\) encodes a collection of vector valued signals originating from a number of
source locations \(x\in \Omega\), and \(K\) models the signal that will be
received by a measurement setup. The data vector \(y_d\) contains (potentially noisy)
observations obtained in practice, and the first term measures the misfit of the data to
the response of the model.
Often, such models involve trigonometric polynomials or other
analytically given functions~\cite{Duval2015,azais2015spike,superres}. 
More complicated models involve partial differential equations~\cite{casaszuazua,walter2017Helmholtz,bredies2013inverse}. Here,
\(K u\) corresponds to (possibly pointwise) observations of the
PDE solution corresponding to a source term \(u\).

A second motivation arises in the theory of optimal
design~\cite{fedorov2012model,pronzatononlinear,ucinski2004optimal}, going back to the
concept of approximate designs by Kiefer and Wolfowitz~\cite{kiefer1959optimum}.
Here, \(x\) corresponds to a spatial sensor location, \(1/\bd{u} \in \R_+\) to the error variance of the
corresponding sensor, and \(K u\) to the Fisher information matrix of a linear (or
linearized) Gaussian model associated to a measurement setup \(u\). In this case, there are various different ``information
criteria'' \(F\) to evaluate the quality of the overall measurement setup \(u\), which are
usually convex, smooth, but extended real valued functionals (allowing for the value
\(+\infty\)).
\(G\) is often chosen to be a convex indicator function to enforce \(\norm{u}_{\M(\Omega)}
\leq 1\), but a cost term as in~\eqref{eq:Psource} can also be considered; see,
e.g.,~\cite{walter2018sensor}.

With the intent of providing a unified analysis that covers all of the mentioned problem instances,
we study the general formulation~\eqref{eq:convex}, where we impose additional
assumptions: We require certain regularity and coercivity properties of
the convex functions \(F\) and \(G\), which covers the examples mentioned above (see
Assumptions~\ref{ass:PDAP} and~\ref{ass:regularF}); and we require second order differentiability 
of the kernel \(\kernel\) with respect to \(x\) around the optimal locations (see Section~\ref{sec:Notation}),
which can be verified in many of the mentioned cases.

\subsection*{Accelerated GCG methods}
The objective of this paper is to analyze certain sequential point insertion and
coefficient optimization methods as efficient solution algorithms
for sparse optimization problems of the form~\eqref{eq:convex}. We refer
to~\cite{boyd2015alternating,bredies2013inverse} for a description and analysis of the
method applied to special instances of the general problem~\eqref{eq:convex}.
Starting from a sparse
initial measure $u^0$ of the form~\eqref{eq:dirac_delta}, these type of algorithms
generates a sequence of sparse iterates~$u^k$, $k = 0,1,2,\ldots$, by the iterative
procedure
\begin{align} \label{eq:concept}
u^{k+1} = u^k + s^k(\widehat{v}^k-u^k),
 \quad \widehat{v}^k= \widehat{\bd{v}}^k \delta_{\widehat{x}^k}, \quad s^k \in [0,1],
\end{align}
where \(\widehat{x}^k\) maximizes a certain continuous function over the set \(\Omega\),
which is computed from the previous iterate \(u^k\); see
Algorithm~\ref{alg:GCGmeasgeneral} below.
The new source location~$\widehat{x}^k$ and the coefficient~$\widehat{\bd{v}}$
are chosen such that~$\widehat{v}^k$ corresponds to a descent direction in a 
\emph{generalized conditional gradient method} (GCG) -- also known as Frank-Wolfe
algorithm~\cite{frank1956algorithm} -- applied to an equivalent reformulation of~\eqref{eq:convex}.
We also point to different variations of the Fedorov-Wynn
algorithm~\cite{yu2011cocktail, steepestmolchanov, wu1978some, wu1978somealg,
  fedorov1972theory,wynn1970}, developed in the context of approximate design theory,
which can be interpreted in this framework.

While the practical implementation of the GCG algorithm is fairly
simple, it suffers from slow asymptotic convergence.
Several works~\cite{walter2018sensor, efekthari, bredies2013inverse,
  boyd2015alternating} derive a sublinear~$\mathcal{O}(1/k)$ convergence rate for the
objective functional values of the iterates under mild assumptions on the problem and
several choices of the step size~$s^k$. Numerical experiments
(e.g., \cite{walter2018sensor}) confirm that this convergence is also observed in practice.
Therefore, it is unpractical to solve the problem
to high precision, which motivates the introduction of additional \emph{acceleration} steps.
Moreover, the absence of point removal steps leads to
undesirable clustering effects: The support size of
the iterate grows monotonically with \(k\) and, in later iterations, new support points are
inserted very close to existing ones. As a remedy, one is also interested to incorporate
additional \emph{sparsification}
steps which can iteratively remove support points
without increasing the objective functional values.
In the present work, we consider
additional optimization steps based on the sparse representation of the
iterates in terms of their support points~$\mathbf{x}$ and coefficients~$\bd{u}$
according to~\eqref{eq:dirac_delta}.
Defining the updated support (\emph{active set}) corresponding to~\eqref{eq:concept} as
\(\mathcal{A}_{k+1/2} = \mathcal{A}_k \cup \{\,\widehat{x}^k\,\}\), where
\(\mathcal{A}_k = \supp u^k = \{\,x_i \;|\; i = 1,\ldots,N_k\,\}\),
we improve
the coefficients of the next iterate by approximately solving the 
\emph{coefficient optimization} problem
\begin{equation}
\label{eq:coeffopt}
\text{Minimize}\quad j(U_{\mathcal{A}_{k+1/2}}(\bd{u}))
\quad\text{for } \bd{u} \in H^{N_{k}+1}.
\end{equation}
Note that this is a convex minimization problem on the Hilbert
space~$H^{N_k+1}$ due to the linearity of the kernel \(\kernel\) in the argument \(\bd{u}\).
In fact,~\eqref{eq:coeffopt} has the same structure as~\eqref{eq:convex}; it is simply
its restriction to the space \(\M(\mathcal{A}_{k+1/2}, H)\).
Since it is also a sparse optimization problem, some
coefficients of the associated optimal solution may be zero.
In the next iteration, we can thus exclude the corresponding support
points from the representation of the measure~\eqref{eq:dirac_delta},
which also serves as a sparsification step.
Thus, we obtain the next iterate, by setting \(u^{k+1} = U_{\mathcal{A}_{k+1/2}}(\bd{u}^{k+1})\) for
\(\bd{u}^{k+1}\) the solution of~\eqref{eq:coeffopt}, and \(\mathcal{A}_{k+1} = \supp u^{k+1}\).
In~\cite{bredies2013inverse} the authors suggests to improve the GCG algorithm by
performing several steps of a proximal gradient method for~\eqref{eq:coeffopt} starting from the
current coefficients as initial guess. Acceleration of GCG by fully resolving the
coefficient optimization problem~\eqref{eq:coeffopt} in each iteration of the method has
been proposed in~\cite{boyd2015alternating, efekthari, wu1978some, walter2017Helmholtz}.

Alternatively to coefficient optimization, \emph{point moving} strategies have been
suggested. Here, we additionally solve a
finite-dimensional, generally non-convex optimization problem in \(\bd{x}\)
subject to constraints imposed by the set \(\Omega\).
In~\cite{bredies2013inverse} it is proposed to move the support points
according to the gradient flow of the smooth part~$\bd{x} \mapsto
F(U_{\mathcal{A}}(\bd{u}^{k+1}))$ and in~\cite{boyd2015alternating} it is advocated to employ
general purpose optimization methods based on first order derivatives.
Further, in~\cite{denoyelle18} the authors propose to
include steps which \emph{simultaneously}
optimize the positions and coefficients of the current iterate.
Note that the nonconvex (and also nonsmooth, if both coefficients and
positions are optimized) point moving problem to be solved here is more computationally intensive than~\eqref{eq:coeffopt}.
Moreover, for sparse minimization problems associated to PDEs, often the kernel~\(\kernel\) is not
given analytically and needs to be further approximated~\cite{casas2012,kunisch2014measure,kunisch2016optimal,walter2017Helmholtz,walter2018sensor}.
To solve~\eqref{eq:convex} in practice, the operator~$K$ is replaced by an approximation employing finite elements.
Note that the most commonly employed Lagrangian finite elements are continuous, but not
continuously differentiable and thus the objective function is
no longer~$\Cc^1$ with respect to $\bd{x}$. This prevents a straightforward
algorithmic solution of the point moving problem by derivative based methods, whereas
coefficient optimization can be implemented in a straightforward fashion and the new point
\(\widehat{x}^k\) can be found by a direct search over the grid nodes (cf.\ \cite{walter2017Helmholtz,walter2018sensor}).
For these reasons, we do not consider point moving in this paper.

\subsection*{Contribution}
The main contribution of this paper is to analyze the procedure resulting from
combining
point insertion steps~\eqref{eq:concept} with subsequent full resolution
of the coefficient optimization problem~\eqref{eq:coeffopt}, which is summarized in Algorithm~\ref{alg:PDAPgeneral}. Note that the
method can be interpreted as an active set method, where new points are added to the
active set at the global maxima of a \emph{dual} variable, and points are removed if their
\emph{primal} coefficients are set to zero (by resolving~\eqref{eq:coeffopt}), we also
refer to this method as Primal-Dual-Active-Point strategy (PDAP). 

Since the coefficient optimization steps are carried out \emph{in addition} to the point
insertion steps, the~$\Ob(1/k)$ convergence rate for GCG is also valid for the
accelerated methods. 
We recall this convergence result in Theorem~\ref{thm:convergenceGCGmeas} for the general
problem formulation~\eqref{eq:convex}.
Concerning the improved convergence behavior of methods combining point insertion and
coefficient optimization over GCG -- as reported
in~\cite{walter2017Helmholtz,walter2018sensor} --
we are not aware of any improved theoretical results.
However, in this paper, we prove a linear convergence rate~$\mathcal{O}(\zeta^k)$ for \(0\leq \zeta < 1\);
see Theorem~\ref{lem:linearrate}. Note that, since the improved result is local in character, 
we still have to rely on the general~$\mathcal{O}(1/k)$ convergence result
mentioned above to ensure that the iterate \(u^k\) is sufficiently close to an optimal
solution for \(k\) large enough.
In order to obtain the improved linear convergence result, we impose a
non-degeneracy condition on the optimal solution; see Assumptions~\ref{ass:strongsource1}
and~\ref{ass:strongsource2}. This enables us to derive further convergence results for the
location parameters \(\bd{x}^k\) and the coefficients \(\bd{u}^k\).
In particular, we show that the support points
of the iterate asymptotically converge towards the support points of the optimal solution,
again at a linear rate; see Theorem~\ref{thm:rateforsupppoints}.
This also gives theoretical evidence for the sparsifying effect of the
coefficient optimization steps, since it shows that support points far away from the optimal locations
eventually will be removed from the iterate measure. Moreover, we
derive convergence estimates for the coefficients. Here, we need to account for the fact
that multiple support points of \(u^k\) can be close to the each optimal location. Lumping
together the corresponding coefficients, we again obtain a linear convergence rate; see
Theorem~\ref{thm:convergenceofcoefficients}. Together, this results in a linear
convergence rate of the iterate measure \(u^k\) in the dual space
\(\Cc^{0,1}(\Omega,H)^*\); see Theorem~\ref{thm:convindualpdap}.

We note that the improved convergence rate proved here also requires additional regularity
assumptions. In particular, we need second derivatives of the kernel function in~\(x\),
which may not be available if discrete approximations to \(K\) are employed in practice.
We point out that these assumptions are only of technical nature: The computation of the
derivatives of the kernel function with respect to the position is not required in the
algorithm.
Consequently, the method can be readily adapted to discretizations of~\eqref{eq:convex},
and the linear rate proved here is also observed in practice~\cite{walter2017Helmholtz,walter2018sensor}.

\subsection*{Related work}
The design of efficient algorithms for~\eqref{eq:convex} is a challenging task since the
space of vector-valued Borel measures is in general non-reflexive. Moreover, it
lacks useful properties such as strict convexity and smoothness which are desirable for
the convergence analysis of many optimization methods. Consequently, a direct extension of
most well-known optimization routines to the present setting is not possible.

\subsubsection*{Discretization-based methods}
A first approach to
the solution of~\eqref{eq:convex} for a continuous candidate set is to replace~$\Omega$ by a approximating sequence of finite sets with~$\Omega_h \subset
\Omega$ for a sequence of mesh parameters $h>0$. For example, $\Omega_h$ may be chosen as the nodal set of a
triangulation~$\mathcal{T}_h$ of~$\Omega$. Since~$\Omega_h$ consists of \(N_h \geq 0\) many
points, every~$u \in \M(\Omega_h,H)$ is of the form $u = \sum_{x_i \in \Omega_h} \bd{u}_i
\delta_{x_i}$. Substituting the space of regular Borel measures in~\eqref{eq:convex} with
the discretized space~$\M(\Omega_h,H)$ yields a convex minimization problem for the
coefficient functions~$\bd{u} \in H^{N_h}$ similar to~\eqref{eq:coeffopt} with
\(\mathcal{A}\) replaced by \(\Omega_h\).
While the resulting problem remains non-smooth
due to the appearance of the total variation norm, it can be solved by a large number of well-studied algorithms. For examples we point to
semi-smooth Newton methods~\cite{ulbrich2002semismooth,milzarekfilter}, the fast iterative
shrinkage-thresholding algorithm (FISTA)~\cite{fista}, and the alternating direction of
multipliers method~\cite{boydadmm}. However, this philosophy
of~\emph{discretize then optimize} harbors the danger of yielding~\textit{mesh dependent}
solution methods.
  While a particular
  algorithm may be efficient for the solution of the discrete problem associated to a fixed
  discretization parameter~$h$, its convergence behaviour can critically depend on the
  fineness of the discretization, which is usually
  the case for the aforementioned methods.
  For the methods analyzed in this paper, such problems only have
  to be solved on a very small candidate set.

\subsubsection*{Regularization based methods}
A different approach to circumvent the lack of reflexivity of the
space~$\M(\Omega,H)$ can be based on \emph{path-following} strategies. Here the original
problem is replaced by a sequence of $L^2$-regularized
ones:
\begin{align} \label{def:primdualprobregugeneral}
\text{Minimize}\quad F(Ku) + G(\norm{u}_{L^1(\Omega,H)}) + \frac{\eps}{2}\norm{u}^2_{L^2(\Omega,H)}
  \quad\text{for } u \in L^2(\Omega,H)
\end{align}
with the Hilbert space~$L^2(\Omega,H) \subset \M(\Omega,H)$. Note that the appearance of the~$L^1(\Omega,H)$
norm in the objective functional (as the restriction of the total variation norm
to~$L^2(\Omega,H)$) still promotes optimal solutions which are nonzero only
on small subsets of~$\Omega$.
Furthermore in the limiting case for~$\eps \rightarrow 0$
the $L^2$-regularized solutions approximate solutions to~\eqref{eq:convex}; see,
e.g.,~\cite{pieper15}. For fixed~$\eps>0$ those problems are amenable to
efficient function space based solution methods such as semi-smooth
Newton (SSN)~\cite{ulbrich2002semismooth,stadler2009elliptic,herzogdirectional}.
For linear-quadratic problems such as~\eqref{eq:Psource}, these methods can be further interpreted
as active set methods~\cite{SSNHinterKuSchi} (specifically Primal-Dual-Active-Set method, PDAS).
While these methods behave \emph{mesh independent} in practice and their performance
scales linearly with the degrees of freedom underlying mesh,
the convergence behavior
deteriorates for small values of~$\eps$. In the practical
realization it is therefore necessary to start at a large value of~$\eps$ and to alternate
between decreasing the regularization
parameter and a (possibly inexact) solution of the regularized problem initialized at the
previous iterate.
Thus, a complete analysis of path-following methods
requires a quantitative convergence analysis of the method used for the solution of the
regularized problem in dependence of~$\eps$, a quantification of the additional
regularization error and update strategies for the parameter; cf.,
e.g.,~\cite{Schiela2014length}.
We refer to~\cite{Walter:2019} for further discussion and a numerical comparison of the
path-following
approach to the PDAP method analyzed here, which shows a substantial advantage of PDAP for
the case that the optimal \(N\) is small compared to the number of degrees of freedom of the mesh.

\subsubsection*{Existing convergence results for conditional gradient methods}
Conditional gradient methods (see, e.g.,~\cite{levitin}) have been originally proposed
by Frank and Wolfe~\cite{frank1956algorithm}. They constitute a simple iterative
scheme for computing a minimizer of a smooth convex function over compact subsets of a
Banach space. Since norm balls in~$\M(\Omega,H)$ are weak* compact, the general problem
formulation fits into this setting for the choice of the convex indicator function $G(m) =
I_{m\leq M}$. Feasibility of the
iterates is ensured by taking the new iterate~$u^{k+1}$ as a convex combination between
the previous iterate~$u^{k}$ and a trial point~$\widehat{v}^k$, which is obtained by minimizing a
linearization of the objective functional around~$u^k$ over the admissible set. A
sublinear rate for the convergence of the objective functional values towards its minimum
can be proven for various choices of the step size~$s^k$. For an overview we refer
to~\cite{dunn1980convergence,dunn1978conditional,dunn1979rates}. The sublinear rate is
tight even for strongly convex objective functionals~\cite{canon}. An improved rate of
convergence can only be derived in more restrictive settings: For problems on infinite
dimensional spaces, a linear rate of convergence is provided in~\cite{levitin,demyanov} if
the gradient of the objective functional is uniformly bounded away from zero on a strongly convex
admissible set. The papers~\cite{dunn1979rates,dunn1980convergence} yield the same rate if
the linearized objective functional fulfills a certain growth condition on the admissible
set. We emphasize that, apart from trivial cases, none of the mentioned results is
directly applicable to the problem at hand. Moreover, we point out that, on finite
dimensional spaces, accelerated conditional gradient methods, such as Wolfe's away-step
conditional gradient~\cite{Wolfe1970away}, eventually yield a linear rate of
convergence~\cite{ahipa,lacoste}. In infinite dimensions, where the candidate set
\(\Omega\) is not finite, we are not aware of similar results. Last we point out that
if we replace~$H$ with the cone~\(\R_+\subset \R\) and set~$G(m)=I_{m \leq M}$,
Algorithm~\ref{alg:PDAPgeneral} corresponds to
the fully-corrective conditional gradient method~\cite{holloway}.
For finite-dimensional
observation space~$Y$, this particular algorithm can be related to an exchange
method~\cite{hettichsemi} on the semi-infinite convex dual problem
of~\eqref{eq:convex}. We are also not aware of convergence results comparable to those
provided in this work for these type of methods. 

After this manuscript was finalized we were made aware of~\cite{flinth2019linear}, where
the authors prove linear convergence of a similar
accelerated conditional gradient method for the particular case of~$H=C=\R$
and~$G(\mnorm{u})=\beta \mnorm{u}$.
We note that~\cite{flinth2019linear} and the present
manuscript were derived independently of each other and differ in certain important aspects.
In particular, in contrast to our work, the authors
require~$\mathcal{A}_k\subset \mathcal{A}_{k+1} $, i.e.\ the dimension of the coefficient
optimization problem~\eqref{eq:coeffopt} increases monotonically. Moreover, the active set
is updated by adding all sufficiently large local maximizers of a certain
dual certificate, while we only require the addition of one global maximum (as in the
original GCG method).

\subsection*{Plan of the paper}

The paper is organized as follows. In
Section~\ref{sec:Notation}, we fix some basic notation and provide the functional analytic
background used for the rest of the work. Section~\ref{sec:sparsemin} introduces the
optimization problem and some basic results on the existence and structure of optimal solutions are derived. We also discuss
how different practically relevant problems fit into the general framework. In Section~\ref{sec:algo} we
formulate the optimization algorithms and prove the subsequential convergence of the
generated iterates as well as a sublinear worst-case convergence rate for the objective
functional values. Under additional structural assumptions on the problem, an improved
local linear rate of convergence is established in Section~\ref{sec:improvconv}. Moreover,
quantitative convergence results for the support points and the coefficients of the
iterates are presented.
Finally, in Section~\ref{sec:numerics}, we illustrate the theoretical findings by numerical experiments.

\section{Notation} \label{sec:Notation}
Let~$\Omega \subset \R^d$,~$d\geq 1$, be compact and denote by~$H$ a separable Hilbert
space with respect to the norm~$\hnorm{\cdot}$ induced by the inner
product~$(\cdot,\cdot)_H$. In the following, $H$ is identified with its dual
space using the Riesz representation theorem. A countably additive mapping~$u \colon \mathcal{B}(\Omega)\to H$ is called a vector
measure, where \(\mathcal{B}(\Omega)\) denote the Borel sets of \(\Omega\).
Associated to~$u$ we define its total variation measure~$\abs{u} \colon \mathcal{B}(\Omega)\to \R_+ $
in the usual way.
The space of vector measures with finite total variation $\abs{u}(\Omega)$ is now denoted by
\(\M(\Omega, H)\), which is a Banach space with respect to the norm
\begin{align*}
  \mnorm{u} = \abs{u}(\Omega) = \int_{\Omega} \de\abs{u}.
\end{align*}
For reference, see the discussion in~\cite[Chapter~12.3]{lang93}.
The support of~$u$ is defined as the support of the corresponding total variation measure
\(\supp u = \supp \abs{u} \subset \Omega\).
We point out that for a measure of the form~\eqref{eq:dirac_delta} consisting of a
finite sum of Dirac delta functions, we have \(\abs{u} = \sum_n \hnorm{\bd{u}_n}
\delta_{x_n}\) and \(\mnorm{u} = \sum_n \hnorm{\bd{u}_n}\). Additionally, those measures are
precisely the measures of finite support, which are characterized by their support
\(\supp u = \{\,x_n \;|\; \hnorm{\bd{u}_n} > 0\,\}\) 
and their coefficients \(\bd{u}_n = u(\{x_n\})\) for \(x_n \in \supp u\) (which we will also abbreviate by
\(u(x_n)\), by a slight abuse of notation).
We denote the cardinality of the support by \(\# \supp u \in \N\cup\{\,0\,\}\).

Moreover, any $u \in \M(\Omega,H)$ is absolutely continuous with respect to~$\abs{u}$, and there exists a unique function
\begin{align*}
u' \in L^{\infty}(\Omega,\abs{u}; H) \quad \text{with} \quad \hnorm{u'(x)}=1 \quad
  \text{for }\abs{u}\text{-almost all } x \in \Omega, 
\end{align*}
such that~$u$ can be decomposed as
\begin{align*}
u(O)= \int_O\de u= \int_O u' \de\abs{u} \quad \text{for all } O \in \mathcal{B}(\Omega);
\end{align*}
see, e.g.,~\cite[Chapter~12.4]{lang83}.
The function $u'$ is called the Radon-Nikod\'ym derivative of~$u$ with respect to~$|u|$; see~\cite{dinculeanu}.
For abbreviation we write~$\de u= u'\de|u|$ in the following.
For finitely supported measures of the form~\eqref{eq:dirac_delta} it clearly holds
\(u'(x_n) = \bd{u}_n/\hnorm{\bd{u}_n}\) for $x_n \in \supp u$.

By~$\Cc(\Omega, H)$ we further denote the space of bounded and continuous functions on~$\Omega$ which assume values in~$H$. It is a separable Banach space when endowed with the usual supremum norm
\begin{align*}
\cnorm{\varphi}= \max_{x\in \Omega}\hnorm{\varphi(x)}
\end{align*}
for any \(\varphi \in \Cc(\Omega,H)\); see e.g.~\cite[Lemma~3.85]{aliprantis}. 
By Singer's representation theorem (see, e.g., \cite{hensgen}) its topological dual space is identified with~$\M(\Omega,H)$ where the associated duality paring is given by
\begin{align*}
\langle  \varphi, u\rangle = \int_{\Omega} (\varphi(x),u'(x))_H \de|u|(x)
\end{align*}
for arbitrary \(\varphi \in \Cc(\Omega,H)\) and \(u \in \M(\Omega,H)\).
A sequence~$u^k \in \M(\Omega,H)$, \(k \geq 0\), is called weak* convergent with
limit~$u\in\M(\Omega,H)$ if
\begin{align*}
\langle \varphi, u^k \rangle \rightarrow \langle \varphi, u \rangle \quad \text{for all } \varphi \in \Cc(\Omega,H)
\end{align*}
for \(k \to \infty\).
We denote this by~$u^k \rightharpoonup^* u$.

Finally, let \(Y\) be another Hilbert space and \(\kernel\colon \Omega \times H \to Y\) be
a weak-to-strong continuous function, which is linear in the second argument.
Now, we define the operator \(K\colon \M(\Omega,H) \to Y\) for each argument
\(u \in \M(\Omega,H)\) by 
\[
K u = \int_\Omega \kernel(x,u'(x)) \de \abs{u}(x),
\]
which clearly extends the definition for finite measures given in~\eqref{eq:convolution}, using the linearity
of \(\kernel\) in the second argument.
Additionally, we define the pre-adjoint operator \(\Kstar \colon Y \to \Cc(\Omega,H)\) by
\[
\Kstar y = \varphi,
\quad\text{where}\quad
(\varphi(x), \bd{u})_H = \inner{\kernel(x, \bd{u}), y}_Y \quad\text{for all } x\in\Omega, \bd{u} \in H.
\]
It is easy to see that \(\inner{Ku,v}_Y = \pair{u,\Kstar v}\) for all \(u \in \M(\Omega,H)\)
and \(y \in Y\), using the definitions. Moreover, \(\Kstar\) is a linear and
bounded operator with norm
\[
\norm{\Kstar}_{\mathcal{L}(Y,\Cc(\Omega,H))}
 =
\sup_{x \in \Omega,\, \norm{\bd{u}}_H = 1} \norm{\kernel(x, \bd{u})}_Y < \infty.
\]
Thus, \(K\) is the Banach space adjoint of \(\Kstar\) and thus also linear and bounded with the
same norm bound.
Since \(\kernel\) is weak-to-strong continuous,
\(K\) is sequentially weak*-to-strong continuous.
Note that \(\Kstar\) is not the Banach space adjoint of \(K\), since
\(\M(\Omega,H)^* \neq \Cc(\Omega,H)\). It can be understood as the adjoint in the
sense of topological vector spaces, if \(\M(\Omega,H)\) is endowed with the weak* topology, but we
will not need this property in the following.

Finally, to prove the convergence result of this manuscript, we require higher
smoothness assumptions on the kernel with respect to \(x \in \Omega\).
We denote the partial derivatives of \(\kernel\) with respect to \(x\) by
\(\partial_i \kernel(x,\bd{u})\), \(i = 1,\ldots,d\), for any \(x \in \operatorname{int}\Omega\) and
\(\bd{u}\in H\) (if they exist) and analogously the higher derivatives. 
By \(\nabla \kernel(x,\bd{u}) \in Y^d\) and
\(\nabla^2 \kernel(x,\bd{u}) \in Y^{d\times d}\) we denote the Gradient and Hessian with
respect to \(x\), respectively.
We require smoothness of the kernel only on a neighborhood of the optimal support points,
and the precise assumptions will be given in section~\ref{subsec:strongsource}.
For smooth functions on any open subset \(\Omega' \subset \Omega\) we denote by
\(\Cc^2(\bar{\Omega}')\) the spaces of twice continuously differentiable
functions with derivatives that can be continuously
extended up to the boundary of~$\Omega'$, endowed with the usual supremum norm over all partial derivatives.
The space \(\Cc^{0,1}(\bar{\Omega}')\) denotes the Lipschitz continuous functions
endowed with the usual Lipschitz norm.
Finally, for smooth functions taking values in the Hilbert space \(H\) (resp.\ $Y$), by
\(\Cc^{0,1}(\bar{\Omega}',H)\) and \(\Cc^{2}(\bar{\Omega}',H)\) we
denote the vector valued variants of the above spaces, defined in the canonical way.

\section{Sparse minimization problems} \label{sec:sparsemin}
We now turn to sparse minimization problems.
Our aim is to solve the nonsmooth convex problem
\begin{align}
  \label{def:pdaproblem}
  \min_{u \in \mathcal{M}(\Omega,H)} j(u)\coloneqq\left\lbrack F\left(Ku\right) + G\left(\mnorm{u}\right)\right\rbrack.
  \tag{\ensuremath{\mathcal{P}}}
\end{align}
Here, the loss functional \(F\colon Y \to \R \cup \{\,+\infty\,\}\) is a convex
(extended real valued) functional with open domain
\(\dom F = \{\,y \in Y \;|\; F(y) < +\infty\,\}\) on the Hilbert space~\(Y\). The convex cost functional
\(G\colon \R \to \R \cup \{\,+\infty\,\}\) is assumed to be monotone on \(\R^+\). We note that its domain is given by
\(\dom j = \{\, u \in \mathcal{M}(\Omega,H) \;|\; \mnorm{u} \in \dom G,\; Ku \in \dom F\,\}\).
In order to ensure well-posedness of this problem, the following assumptions are
made.
\begin{assumption}
\label{ass:PDAP}
Let the following assumptions hold:
\begin{itemize}
 \item[(i.)]
The function $G\colon \R \rightarrow \R \cup \{+\infty\}$ is proper, convex, lower
semi-continuous, and monotonically increasing on~$\R_+$ with $G(m) \rightarrow +\infty$ for
$m \rightarrow \infty$. Without loss of generality we set \(G(m) = +\infty\) for \(m < 0\).
\item[(ii.)]
The domain of the functional~$j$ is nonempty and~$j$~is radially unbounded.
\item[(iii.)]
The function $F\colon Y \rightarrow \R \cup \{\,+\infty\,\}$ is convex and
lower semi-continuous.
Moreover, \(\dom F\) is open in \(Y\), and \(F\) is strictly convex and continuously Fr{\'e}chet differentiable
on \(\dom F\).
\end{itemize}
\end{assumption}
Note that~(i.),~(iii.) and the weak*-to-strong continuity of~$K$ imply that~$j$ is weak* lower semicontinuous on~$\Moc$.
  The convex subdifferential of \(G\) will be denoted by \(\partial G\) and the
(Hilbert-space) Fr{\'e}chet derivative of~$F$ at~$y \in \dom F$ will be denoted
by~$\nabla F(y)$.
For later use, we also define the smooth part of the reduced cost functional as
\[
f(u) \coloneqq F(Ku).
\]
From Assumption~\ref{ass:PDAP}(iii.), the linearity of~$K$ as well as
the chain rule we conclude that~$f$ is Fr\'echet differentiable at~$u \in \dom j$. In order to identify the Fr\'echet derivative
we compute the directional derivative of~$f$ 
in direction \(\delta u \in \M(\Omega, H)\) as
\begin{align*}
f'(u)(\delta u) = \inner{\nabla F(K u), K\,\delta{u}}_Y
= \pair{\Kstar \nabla F(K u),\, \delta{u}}.
\end{align*}
Thus, the Fr\'echet derivative of~$f$ at~$u$ can be identified with the continuous function~$\nabla f(u) \coloneqq \Kstar \nabla F(K u) \in \mathcal{C}(\Omega,H) \subset \M(\Omega,H)^*$. Moreover, due to the weak*-to-strong continuity of~$K$, the mapping
\begin{align*}
\nabla f \colon \dom j \to \Cc(\Omega,H), \quad \nabla f(u) = \Kstar \nabla F(Ku),
\end{align*}
is sequentially weak*-to-strong continuous.

\subsection{Existence of minimizers and optimality conditions} \label{subsec:existence}
Before we turn to the algorithmic solution of~\eqref{def:pdaproblem} we summarize some
basic properties, such as existence and optimality conditions, which will be necessary in
the following.
The existence of at least one global minimizer
to~\eqref{def:pdaproblem} thus follows immediately by the direct method of variational
calculus~(see, e.g., \cite[Chapter~1]{DalMasoGamma}).
\begin{proposition} \label{prop:optcondpdap}
There exists at least one optimal solution~$\bar{u}\in \mathcal{M}(\Omega, H)$ to~\eqref{def:pdaproblem}.
\end{proposition}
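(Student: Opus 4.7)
The plan is to apply the direct method of the calculus of variations in the predual setting. Specifically, since $\mathcal{M}(\Omega,H)$ is identified with the topological dual of the separable Banach space $\mathcal{C}(\Omega,H)$ via Singer's theorem, bounded subsets of $\mathcal{M}(\Omega,H)$ are weak* sequentially compact by the sequential Banach--Alaoglu theorem. The remark after Assumption~\ref{ass:PDAP} already tells us that $j$ is weak* lower semicontinuous, so the only nontrivial ingredient is coercivity, which is supplied by radial unboundedness.

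Concretely, I would first pick a minimizing sequence $\{u^k\}\subset \dom j$, which exists because Assumption~\ref{ass:PDAP}(ii) guarantees $\dom j \neq \emptyset$, so $\inf j < +\infty$. Radial unboundedness from Assumption~\ref{ass:PDAP}(ii) then forces $\sup_k \mnorm{u^k} < \infty$; otherwise $j(u^k) \to +\infty$ along a subsequence, contradicting $j(u^k) \to \inf j$. Applying the sequential Banach--Alaoglu theorem to this bounded sequence, I extract a (non-relabeled) subsequence and a limit $\bar{u}\in\mathcal{M}(\Omega,H)$ with $u^k \rightharpoonup^* \bar{u}$.

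It remains to pass to the limit. Weak*-to-strong continuity of $K$ gives $Ku^k \to K\bar{u}$ in $Y$, and lower semicontinuity of the total variation norm with respect to weak* convergence gives $\mnorm{\bar{u}} \leq \liminf_k \mnorm{u^k}$. Combined with lower semicontinuity and monotonicity of $G$ on $\R_+$ (Assumption~\ref{ass:PDAP}(i)) and lower semicontinuity of $F$ (Assumption~\ref{ass:PDAP}(iii)), this yields
\[
j(\bar{u}) = F(K\bar{u}) + G(\mnorm{\bar{u}}) \leq \liminf_{k\to\infty} \bigl[F(Ku^k) + G(\mnorm{u^k})\bigr] = \inf j,
\]
so $\bar{u}$ is the desired minimizer.

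There is essentially no hard step here; the only point that requires a touch of care is verifying that $G(\mnorm{\bar{u}}) \leq \liminf_k G(\mnorm{u^k})$, which follows from combining the weak* lower semicontinuity of the norm with the monotone nondecreasing character of $G$ on $\R_+$, rather than lower semicontinuity of $G$ alone (since the norm may drop in the weak* limit). Everything else is a routine application of the direct method using ingredients already stated in the paper.
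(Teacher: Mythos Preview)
Your proposal is correct and follows exactly the approach the paper takes: the paper simply states that existence follows from the direct method of variational calculus (citing Dal Maso), relying on the weak* lower semicontinuity of $j$ already noted after Assumption~\ref{ass:PDAP} and radial unboundedness for coercivity. Your argument is a faithful and complete fleshing-out of that one-line justification, including the correct observation that $G(\mnorm{\bar u}) \le \liminf_k G(\mnorm{u^k})$ uses monotonicity of $G$ together with weak* lower semicontinuity of the norm.
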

Let us turn to a structural characterization of minimizers obtained
from~\eqref{def:pdaproblem}. The following theorem is a direct consequence of the
one-homogeneity of the norm and Assumption~\ref{ass:PDAP}(i.); see, e.g., \cite[Theorem~6.22]{Walter:2019}.
\begin{theorem} \label{thm:optimalityconditionequPDAP}
Let~$\bar{u}\in \dom j$ be given. Set~$\bar{p}=-\nabla f(\bar{u}) \in \Cc(\Omega, H)$. Then~$\bar{u}$ is an optimal solution to~\eqref{def:pdaproblem} if and only if 
\begin{align} \label{eq:equivalencepdapvarinqu}
\langle \bar{p} , \bar{u} \rangle = \cnorm{\bar{p}} \mnorm{\bar{u}}, \quad \cnorm{\bar{p}} \in  \partial G(\mnorm{\bar{u}})
\end{align}
\end{theorem}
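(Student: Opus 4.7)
The plan is to reduce optimality to a subdifferential inclusion and then characterize the subdifferential of $g(u)\coloneqq G(\mnorm{u})$ directly. Since $j=f+g$ with $f=F\circ K$ convex and (by Assumption~\ref{ass:PDAP}(iii.) together with $\bar u\in\dom j$ and $\dom F$ open) Fr\'echet differentiable and continuous at $\bar u$, the subdifferential sum rule applies and $\bar u$ is optimal if and only if $\bar p=-\nabla f(\bar u)\in\partial g(\bar u)$, that is,
\begin{equation}\label{eq:subgradineq}
G(\mnorm{u})\ \geq\ G(\mnorm{\bar u})+\pair{\bar p,u-\bar u}\qquad\text{for every }u\in\M(\Omega,H).
\end{equation}
It suffices to show that \eqref{eq:subgradineq} is equivalent to \eqref{eq:equivalencepdapvarinqu}.

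For the easy direction ($\Leftarrow$), I assume the two conditions. From $\cnorm{\bar p}\in\partial G(\mnorm{\bar u})$ and the scalar subgradient inequality I get $G(\mnorm{u})\geq G(\mnorm{\bar u})+\cnorm{\bar p}(\mnorm{u}-\mnorm{\bar u})$. Combining this with the generic duality bound $\pair{\bar p,u}\leq\cnorm{\bar p}\mnorm{u}$ and the extremality relation $\cnorm{\bar p}\mnorm{\bar u}=\pair{\bar p,\bar u}$ yields \eqref{eq:subgradineq} in one line.

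For the nontrivial direction ($\Rightarrow$), I exploit test measures that saturate the duality bound. Because $\Omega$ is compact and $\bar p\in\Cc(\Omega,H)$, the norm $\cnorm{\bar p}$ is attained at some $x^\star\in\Omega$; I then pick a unit vector $\bar e\in H$ with $(\bar p(x^\star),\bar e)_H=\hnorm{\bar p(x^\star)}=\cnorm{\bar p}$, so that the Dirac $v_\star=\bar e\,\delta_{x^\star}$ satisfies $\mnorm{v_\star}=1$ and $\pair{\bar p,v_\star}=\cnorm{\bar p}$. Inserting $u=tv_\star$ for $t\geq 0$ into \eqref{eq:subgradineq} produces
\begin{equation}\label{eq:testDirac}
G(t)\ \geq\ G(\mnorm{\bar u})+t\cnorm{\bar p}-\pair{\bar p,\bar u}\qquad\text{for all }t\geq 0.
\end{equation}
Evaluating \eqref{eq:testDirac} at $t=\mnorm{\bar u}$ gives $\cnorm{\bar p}\mnorm{\bar u}\leq\pair{\bar p,\bar u}$; combined with the reverse inequality (which always holds) this forces the extremality equation $\pair{\bar p,\bar u}=\cnorm{\bar p}\mnorm{\bar u}$. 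Substituting this back into \eqref{eq:testDirac} yields $G(t)\geq G(\mnorm{\bar u})+\cnorm{\bar p}(t-\mnorm{\bar u})$ for $t\geq 0$, and the inequality extends trivially to $t<0$ since $G\equiv+\infty$ there by Assumption~\ref{ass:PDAP}(i.); hence $\cnorm{\bar p}\in\partial G(\mnorm{\bar u})$.

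The only mildly delicate step is the selection of the saturating probe measure $v_\star$: this is exactly where compactness of $\Omega$ and continuity of $\bar p=-\Kstar\nabla F(K\bar u)$ (inherited from the boundedness of $\Kstar$ and Fr\'echet differentiability of $F$) are used, together with Singer's representation of the duality pairing. Everything else is one-homogeneity of $\mnorm{\cdot}$ combined with scalar convex analysis, which explains the brief pointer to \cite{Walter:2019} in the paper.
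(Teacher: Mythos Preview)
Your proof is correct and complete. The paper itself does not give a proof of this theorem; it only states that the result ``is a direct consequence of the one-homogeneity of the norm and Assumption~\ref{ass:PDAP}(i.)'' and cites \cite[Theorem~6.22]{Walter:2019}. Your argument is precisely the natural way to unpack that hint: reduce optimality to the inclusion $-\nabla f(\bar u)\in\partial\bigl(G\circ\mnorm{\cdot}\bigr)(\bar u)$ via the sum rule (valid because $f$ is Fr\'echet differentiable at $\bar u\in\dom j$), and then characterize that subdifferential by probing with Dirac measures that saturate the duality pairing --- which is exactly where one-homogeneity and compactness of $\Omega$ enter. There is nothing to add or correct.
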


Throughout the rest of the paper we will consider a solution 
\(\bar{u}\) of~\eqref{def:pdaproblem} and define
\begin{align}
\label{eq:def_dual}
  \bar{y} \coloneqq K\bar{u} \in Y, \quad
  \bar{p} \coloneqq - \nabla f(\bar{u}) = -\Kstar\nabla F(\bar{y}) \in \Cc(\Omega, H), \quad
  \bar{\lambda} \coloneqq \norm{\bar{p}}_{\Cc}.
\end{align}
We refer to \(\bar{y}\) as the optimal observation and to \(\bar{p}\) as the dual variable associated
to~$\bar{u}$.
\begin{proposition}
\label{prop:unique_obs}
The optimal observation~$\bar{y} = K\bar{u}$
and dual variable~$\bar{p} = -\nabla f(\bar{u})$ are the same for every minimizer~$\bar{u}$ to~\eqref{def:pdaproblem}.
\end{proposition}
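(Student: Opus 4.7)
The plan is to exploit strict convexity of $F$ (Assumption~\ref{ass:PDAP}(iii.)) together with the convexity structure of the objective. Suppose $\bar{u}_1, \bar{u}_2 \in \M(\Omega,H)$ are two minimizers with optimal value $j^\star = j(\bar{u}_1) = j(\bar{u}_2)$. Consider the convex combination $\bar{u}_t = t\bar{u}_1 + (1-t)\bar{u}_2$ for some fixed $t \in (0,1)$. First I would check that $\bar{u}_t \in \dom j$: by linearity of $K$ we have $K\bar{u}_t = tK\bar{u}_1 + (1-t)K\bar{u}_2$, which lies in the convex set $\dom F$; for the cost term I would use $\mnorm{\bar{u}_t} \leq t\mnorm{\bar{u}_1} + (1-t)\mnorm{\bar{u}_2}$ together with monotonicity and convexity of $G$ on $\R_+$ (Assumption~\ref{ass:PDAP}(i.)) to conclude $\mnorm{\bar{u}_t} \in \dom G$.

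Next I would chain together the convexity estimates to deduce that each of them must be an equality. Concretely,
\begin{align*}
j^\star \leq j(\bar{u}_t)
&= F(K\bar{u}_t) + G(\mnorm{\bar{u}_t}) \\
&\leq \bigl[tF(K\bar{u}_1) + (1-t)F(K\bar{u}_2)\bigr] + G\bigl(t\mnorm{\bar{u}_1} + (1-t)\mnorm{\bar{u}_2}\bigr) \\
&\leq t\,j(\bar{u}_1) + (1-t)\,j(\bar{u}_2) = j^\star,
\end{align*}
where the first inequality in the middle line uses convexity of $F$ applied to $K\bar{u}_t$ together with monotonicity of $G$ applied to $\mnorm{\bar{u}_t} \leq t\mnorm{\bar{u}_1} + (1-t)\mnorm{\bar{u}_2}$, and the last inequality uses convexity of $G$. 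Since the two ends agree, every inequality is sharp; in particular
\[
F(tK\bar{u}_1 + (1-t)K\bar{u}_2) = tF(K\bar{u}_1) + (1-t)F(K\bar{u}_2).
\]

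Finally, strict convexity of $F$ on $\dom F$ forces $K\bar{u}_1 = K\bar{u}_2$; calling the common value $\bar{y}$, this shows the optimal observation is independent of the minimizer. Since $\bar{p} = -\nabla f(\bar{u}) = -\Kstar \nabla F(K\bar{u}) = -\Kstar \nabla F(\bar{y})$ is determined entirely by $\bar{y}$, uniqueness of $\bar{p}$ follows at once. The main technical point to be careful about is the feasibility check for $\bar{u}_t$ and the correct use of monotonicity of $G$ to estimate $G(\mnorm{\bar{u}_t})$ by $G$ evaluated at the larger convex combination, which is where the separate structure of the triangle-inequality-type estimate on $\mnorm{\cdot}$ enters; the remainder is a direct consequence of strict convexity of $F$.
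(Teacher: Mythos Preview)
Your proof is correct and is precisely the ``standard argument using the strict convexity of $F$'' that the paper invokes without spelling out; you have simply filled in the details of the convex-combination argument and the feasibility check for $\bar{u}_t$. The paper's own proof is a two-line sketch to the same effect, so your approach coincides with it.
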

\begin{proof}
The uniqueness of the optimal observation~$\bar{y}$ can be shown by a standard
argument using the strict convexity of \(F\).
Thus, the optimal dual variable~$\bar{p} = - \nabla f(\bar{u}) = -\Kstar\nabla F(\bar{y})$ is
unique as well.
\end{proof}

The optimality conditions can be equivalently expressed
through a sparsity condition on the total variation measure~$\abs{\bar{u}}$ and a projection
formula for the Radon-Nikod{\'y}m derivative~$\bar{u}'$; see, e.g.,~\cite[Theorem~6.24]{Walter:2019}.
\begin{theorem} \label{thm:equivalentoptoconditionsgeneral}
Let~$\bar{u} \in \dom j$ with polar
decomposition~$\de\bar{u}= \bar{u}'\de|\bar{u}|$. Then the
condition~\eqref{eq:equivalencepdapvarinqu} is equivalent to
\(\cnorm{\bar{p}} \in  \partial G(\mnorm{\bar{u}})\)
and either \(\bar{\lambda} = \cnorm{\bar{p}} = 0\) or
\[
\supp \bar{u}
  \subset\left\{\,x \in \Omega\;\big|\;\hnorm{\bar{p}(x)} = \bar{\lambda}\,\right\},
  \quad\text{and}\quad
  \bar{u}'(x)=\frac{\bar{p}(x)}{\bar{\lambda}}
  \quad
  \abs{\bar{u}}\text{-a.a. } x\in\Omega.
\]
\end{theorem}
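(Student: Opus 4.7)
The plan is to deduce the characterization from the elementary inequality chain obtained by combining pointwise Cauchy–Schwarz in $H$ with the definition of the supremum norm. By the duality pairing formula from Section~\ref{sec:Notation}, we have
\[
\langle \bar{p}, \bar{u}\rangle = \int_\Omega (\bar{p}(x), \bar{u}'(x))_H \, \de\abs{\bar{u}}(x).
\]
Since $\hnorm{\bar{u}'(x)} = 1$ for $\abs{\bar{u}}$-a.e.\ $x$, Cauchy–Schwarz yields pointwise $(\bar{p}(x),\bar{u}'(x))_H \leq \hnorm{\bar{p}(x)} \leq \bar{\lambda}$, and integration gives
\[
\langle \bar{p}, \bar{u}\rangle \;\leq\; \int_\Omega \hnorm{\bar{p}(x)} \, \de\abs{\bar{u}}(x) \;\leq\; \bar{\lambda}\,\mnorm{\bar{u}}.
\]
So establishing \eqref{eq:equivalencepdapvarinqu} (already equivalent to optimality by Theorem~\ref{thm:optimalityconditionequPDAP} together with $\bar\lambda \in \partial G(\mnorm{\bar u})$) becomes an equality analysis of this chain.

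The case $\bar{\lambda} = 0$ is trivial: then $\bar{p}\equiv 0$ and both sides of \eqref{eq:equivalencepdapvarinqu} vanish, so the condition is automatic and the second alternative in the theorem is the one that contributes no additional information.

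In the case $\bar{\lambda} > 0$, I would argue that equality in the outer inequality $\int \hnorm{\bar{p}}\,\de\abs{\bar{u}} = \bar{\lambda}\abs{\bar{u}}(\Omega)$ forces $\hnorm{\bar{p}(x)} = \bar{\lambda}$ for $\abs{\bar{u}}$-a.e.\ $x$. Because $\bar{p}$ is continuous and $\supp\bar{u} = \supp\abs{\bar{u}}$ is by definition the smallest closed set of full $\abs{\bar{u}}$-measure, this pointwise identity extends to all of $\supp\bar{u}$, giving the support inclusion. Equality in the pointwise Cauchy–Schwarz then forces the direction of $\bar{u}'(x)$ to agree with that of $\bar{p}(x)$, i.e., $\bar{u}'(x) = \bar{p}(x)/\hnorm{\bar{p}(x)} = \bar{p}(x)/\bar{\lambda}$ for $\abs{\bar{u}}$-a.e.\ $x$. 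Conversely, substituting these two properties back into the integral representation of $\langle \bar{p},\bar{u}\rangle$ immediately yields $\bar{\lambda}\mnorm{\bar{u}}$, proving the reverse implication.

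The only nontrivial step is the passage from an $\abs{\bar{u}}$-almost-everywhere identity for $\hnorm{\bar{p}(\cdot)}$ to a pointwise identity on the closed support, but this follows directly from continuity of $\bar{p}$ and the fact that $\supp\abs{\bar{u}}$ is contained in the closure of any set of full $\abs{\bar{u}}$-measure. Everything else is routine manipulation of the duality pairing, so the main work is bookkeeping of the two equality cases.
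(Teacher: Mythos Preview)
Your argument is correct. The paper does not actually give a proof of this theorem in the text; it simply cites \cite[Theorem~6.24]{Walter:2019} for the result. Your direct argument via the inequality chain
\[
\langle \bar p,\bar u\rangle \le \int_\Omega \hnorm{\bar p(x)}\,\de\abs{\bar u}(x) \le \bar\lambda\,\mnorm{\bar u}
\]
together with the equality analysis (nonnegative integrand vanishing $\abs{\bar u}$-a.e., then continuity of $\bar p$ to upgrade to the closed support, then the Cauchy--Schwarz equality case for the direction) is precisely the standard proof one would expect to find behind that citation, and all steps are justified. The only point worth making explicit when writing it up is that in the Cauchy--Schwarz equality step you are using $\hnorm{\bar p(x)}=\bar\lambda>0$ $\abs{\bar u}$-a.e.\ (already established) to divide safely; you do this implicitly but it is worth a word.
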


In many situations it can be ensured that a minimizer with finite
support~\eqref{eq:dirac_delta} exists: for instance, when the space~\(Y\) is finite dimensional
(see, e.g., \cite[Theorem~3.7]{walter2017Helmholtz},~\cite[Proposition~6.32]{Walter:2019}) or when the dual variable assumes its maximum only in
a finite set of points. We will impose the latter condition below, which will be necessary for
the improved convergence analysis. In this case,
Theorem~\ref{thm:equivalentoptoconditionsgeneral} can be interpreted as follows:
\begin{corollary} \label{corr:finitesupportedpdap}
Assume that~$\bar{u} \in \dom j$ has finite support, i.e.\ $\bar{u} = \sum_{n=1}^N \bar{\bd{u}}_n \delta_{\bar{x}_n}$ with~$N \geq 0$,
$\bar{\bd{u}}_n \in H$, and $\bar{x}_n \in \Omega$.
Then the condition~\eqref{eq:equivalencepdapvarinqu} is equivalent to
\(\cnorm{\bar{p}} \in  \partial G(\mnorm{\bar{u}})\)
and either \(\bar{\lambda} = \cnorm{\bar{p}} = 0\) or
\[
\bar{x}_n \in \left\{\,x \in \Omega\;\big|\;\hnorm{\bar{p}(x)} = \bar{\lambda}\,\right\},
\quad
\frac{\bar{\bd{u}}_n}{\hnorm{\bar{\bd{u}}_n}} = \frac{\bar{p}(\bar{x}_n)}{\bar{\lambda}}
\quad \text{for } n \text{ with } \bar{\bd{u}}_n \neq 0.
\]
\end{corollary}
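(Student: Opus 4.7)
The plan is to derive the corollary as a direct specialization of Theorem~\ref{thm:equivalentoptoconditionsgeneral} to the finitely supported case. The only content to be added is the translation between the measure-theoretic statement of Theorem~\ref{thm:equivalentoptoconditionsgeneral} (involving the support of $\bar{u}$ and its Radon-Nikod\'ym derivative $\bar{u}'$) and its atomic counterpart formulated in terms of the individual points $\bar{x}_n$ and coefficients $\bar{\bd{u}}_n$.

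Concretely, I would first invoke Theorem~\ref{thm:equivalentoptoconditionsgeneral} to rewrite~\eqref{eq:equivalencepdapvarinqu} in the stated equivalent form. The subdifferential condition $\cnorm{\bar{p}} \in \partial G(\mnorm{\bar{u}})$ is already in the desired form and carries over without modification, so the task reduces to identifying the support and polar decomposition of the atomic measure $\bar{u} = \sum_{n=1}^N \bar{\bd{u}}_n \delta_{\bar{x}_n}$. As noted in Section~\ref{sec:Notation}, the total variation is $\abs{\bar{u}} = \sum_n \hnorm{\bar{\bd{u}}_n} \delta_{\bar{x}_n}$, so the support is
\[
\supp \bar{u} = \{\,\bar{x}_n \;|\; \hnorm{\bar{\bd{u}}_n} > 0\,\},
\]
and the Radon-Nikod\'ym derivative satisfies $\bar{u}'(\bar{x}_n) = \bar{\bd{u}}_n/\hnorm{\bar{\bd{u}}_n}$ for each $n$ with $\bar{\bd{u}}_n \neq 0$ (this being the unique normalization making $\bar{u} = \bar{u}' \abs{\bar{u}}$). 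Substituting these identities into the two conditions of Theorem~\ref{thm:equivalentoptoconditionsgeneral} immediately yields the conditions claimed in the corollary.

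There is no substantial obstacle; the only points requiring a word of care are (a) vanishing coefficients, for which $\bar{x}_n$ is simply absent from $\supp \bar{u}$ and no condition is imposed (matching the restriction ``for $n$ with $\bar{\bd{u}}_n \neq 0$'' in the statement), and (b) the case of repeated locations $\bar{x}_n = \bar{x}_m$ with $n \neq m$, which can be handled by merging the corresponding Dirac masses into a single atom with coefficient $\bar{\bd{u}}_n + \bar{\bd{u}}_m$ before applying the argument — this does not affect the validity of the stated conditions, since both locations satisfy $\hnorm{\bar{p}(\bar{x}_n)} = \bar{\lambda}$ and the merged coefficient still points in the direction $\bar{p}(\bar{x}_n)/\bar{\lambda}$.
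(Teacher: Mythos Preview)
Your proposal is correct and follows essentially the same route as the paper: the paper's proof is a one-line appeal to Theorem~\ref{thm:equivalentoptoconditionsgeneral} together with the identifications $\supp \bar{u} \subset \{\bar{x}_n\}_{n=1,\ldots,N}$ and $\bar{u}'(\bar{x}_n) = \bar{\bd{u}}_n/\hnorm{\bar{\bd{u}}_n}$ for $\bar{x}_n \in \supp \bar{u}$, exactly as you outline. Your additional remarks on vanishing coefficients and repeated locations are sound but more than the paper itself provides.
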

\begin{proof}
This follows directly with \(\supp \bar{u} \subset \{\,\bar{x}_n\,\}_{n=1,\ldots,N}\) and
\(\bar{u}'(\bar{x}_n) = \bar{\bd{u}}_n/\hnorm{\bar{\bd{u}}_n}\) for all \(\bar{x}_n \in
\supp \bar{u}\).
\end{proof}

Note that, for problems of the
form~\eqref{eq:Psource} and \(\bar{u} \neq 0\), \(\bar{\lambda}\) is simply equal to the
cost or regularization parameter \(\beta\).

\subsection{Uniqueness of solutions and non-degeneracy conditions}
\label{subsec:strongsource}

In order to ensure uniqueness of
the solution \(\bar{u}\) itself, we introduce the corresponding unique dual certificate
\[
\bar{P} \in \Cc(\Omega)\colon\quad \bar{P}(x) \coloneqq \hnorm{\bar{p}(x)},
\]
where we
recall that \(\bar{p} = -\nabla f(\bar{u}) = - \Kstar \nabla F(\bar{y})\) and \(\bar{\lambda} \coloneqq
\norm{\bar{p}}_{\Cc} = \norm{\bar{P}}_{\Cc(\Omega)}\) (which are uniquely defined
according to Proposition~\ref{prop:unique_obs}).

In the following, we will restrict ourselves
to optimal solutions with non-degenerate dual variable~$\bar{p} = -\nabla f(\bar{u})$,
i.e.\ \(\bar{p} \neq 0\) and thus \(\bar{\lambda} > 0\),
since a trivial dual variable \(\bar{p}\) would imply that
\(\bar{u}\) is also a global minimizer of the functional \(f\) over the set
\(\M(\Omega,H)\), which can be easily ruled out in most situations.
Furthermore, we impose the following conditions for the analysis of the paper.
\begin{assumption} \label{ass:strongsource1}
There is a discrete set~$\{\,\bar{x}_n\,\}^N_{n=1} \subset \operatorname{int} \Omega$ for some
  \(N\geq 0\) such that the dual certificate~$\bar{P}\in \Cc(\Omega)$ defined above
  and~$\bar{\lambda} = \norm{\bar{P}}_{\Cc(\Omega)} > 0$ fulfill
\begin{align}
\label{exp:finitelymanpoints}
\left\{ \, x \in \Omega\;|\; \bar{P}(x)= \bar{\lambda}\,\right\}
  =\{\,\bar{x}_n\,\}^N_{n=1}.
\end{align}
Moreover, the following set is linearly independent:
\begin{align}
\label{eq:linindset}
\left\{\, \kernel (\bar{x}_n, \bar{p}(\bar{x}_n))\;\big|\;n=1,\dots,N\,\right\} \subset Y,
\end{align}
\end{assumption}
This assumption ensures the
existence of a unique, sparse minimizer~$\bar{u}$; cf.\ also~\cite{Duval2015}.
\begin{proposition} \label{prop:uniquenesspdap}
Under Assumption~\ref{ass:strongsource1} the problem~\eqref{def:pdaproblem} admits a
unique discrete minimizer~$\bar{u}\in \M(\Omega,H)$ given by a finite sum of Dirac delta functions
\begin{align} \label{eq:optsolution}
\bar{u} = \sum^N_{n=1}  \bar{\bd{u}}_n \delta_{\bar{x}_n},
 \quad \bar{\bd{u}}_n = \bar{\mu}_n \frac{\bar{p}(\bar{x}_n)}{\bar{\lambda}},
\text{ with } \bar{\mu}_n = \hnorm{\bar{\bd{u}}_n} \geq 0.
\end{align}
\end{proposition}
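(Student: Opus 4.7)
The plan is to deduce the claim from Proposition~\ref{prop:unique_obs} (uniqueness of the observation and the dual variable), Theorem~\ref{thm:equivalentoptoconditionsgeneral} (support constraint and direction of the Radon--Nikod{\'y}m derivative), and the linear independence condition~\eqref{eq:linindset} in Assumption~\ref{ass:strongsource1}. Existence is already given by Proposition~\ref{prop:optcondpdap}, so only the structure and uniqueness need to be established.

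First, I would pick any minimizer $\bar{u}$ and use Proposition~\ref{prop:unique_obs} to fix $\bar{y} = K\bar{u}$, $\bar{p} = -\nabla f(\bar{u})$, and $\bar{\lambda} = \cnorm{\bar{p}} > 0$, all independent of the choice of minimizer. By Theorem~\ref{thm:equivalentoptoconditionsgeneral}, the support of $\bar{u}$ is contained in $\{\, x \in \Omega \;|\; \hnorm{\bar{p}(x)} = \bar{\lambda}\,\}$, which equals the finite set $\{\bar{x}_n\}_{n=1}^N$ by~\eqref{exp:finitelymanpoints}. In particular, $\bar{u}$ is a finitely supported measure of the form $\bar{u} = \sum_{n=1}^N \bar{\bd{u}}_n \delta_{\bar{x}_n}$ with coefficients $\bar{\bd{u}}_n \in H$ (some possibly zero).

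Next, invoking Corollary~\ref{corr:finitesupportedpdap}, for every $n$ with $\bar{\bd{u}}_n \neq 0$ the polar direction is fixed as $\bar{\bd{u}}_n/\hnorm{\bar{\bd{u}}_n} = \bar{p}(\bar{x}_n)/\bar{\lambda}$, so that $\bar{\bd{u}}_n = \bar{\mu}_n\, \bar{p}(\bar{x}_n)/\bar{\lambda}$ with $\bar{\mu}_n = \hnorm{\bar{\bd{u}}_n} \geq 0$. For $n$ with $\bar{\bd{u}}_n = 0$ the formula trivially holds with $\bar{\mu}_n = 0$. This gives the claimed representation~\eqref{eq:optsolution} for any minimizer.

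For uniqueness, suppose $\bar{u}$ and $\tilde{u}$ are two minimizers. By the previous step, both share the same support points $\{\bar{x}_n\}$ and the same coefficient directions $\bar{p}(\bar{x}_n)/\bar{\lambda}$, so they differ only through nonnegative magnitudes $\bar{\mu}_n$ and $\tilde{\mu}_n$. Applying $K$ and using linearity of $\kernel$ in the second argument, together with the identity $K\bar{u} = K\tilde{u} = \bar{y}$ from Proposition~\ref{prop:unique_obs}, yields
\begin{equation*}
\sum_{n=1}^N (\bar{\mu}_n - \tilde{\mu}_n)\, \kernel\bigl(\bar{x}_n, \bar{p}(\bar{x}_n)\bigr) = 0,
\end{equation*}
after absorbing the common factor $1/\bar{\lambda}$. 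The linear independence in~\eqref{eq:linindset} now forces $\bar{\mu}_n = \tilde{\mu}_n$ for all $n$, and hence $\bar{u} = \tilde{u}$. No step is genuinely difficult; the only point requiring care is verifying that the optimality condition from Corollary~\ref{corr:finitesupportedpdap} applies at every $\bar{x}_n$ even when the associated coefficient vanishes, which is handled by the case distinction above.
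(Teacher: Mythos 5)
Your proof is correct and follows essentially the same route as the paper: support localization via Theorem~\ref{thm:equivalentoptoconditionsgeneral} and Corollary~\ref{corr:finitesupportedpdap}, then uniqueness of the magnitude vector from the linear independence~\eqref{eq:linindset} and the shared optimal observation. You spell out the uniqueness of $\bar{y}$ (via Proposition~\ref{prop:unique_obs}) and the vanishing-coefficient case more explicitly than the paper does, but these are exactly the ingredients the paper uses implicitly when it invokes injectivity of $\bar{\bd{K}}$.
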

\begin{proof}
We note that points in~\eqref{exp:finitelymanpoints} constitute the potential support
set for the optimal solution, i.e. \(\supp\bar{u} \subset
\{\,\bar{x}_n\,\}_{n=1,\ldots,N}\) due to Theorem~\ref{thm:equivalentoptoconditionsgeneral} and thus
$\bar{u}$ is given as in~\eqref{eq:optsolution}; cf.\ Corollary~\ref{corr:finitesupportedpdap}. 
Together with the form of the integral operator~\eqref{eq:convolution} it holds
\(\bar{y} = K\bar{u}= \bar{\bd{K}} \bar{\mu}\)
for \(\bar{\mu}_n = \hnorm{\bar{\bd{u}}_n}\) where \(\bar{\bd{K}}\colon \R^N \to Y\) is defined as
\begin{align}
\label{eq:Kmatrix}
\bar{\bd{K}} \mu = \sum_{n=1}^N  \kernel \left(\bar{x}_n, \frac{\bar{p}(\bar{x}_n)}{\bar{\lambda}}\right) \mu_n .
\end{align}
Now, with~\eqref{eq:linindset} (using linearity of \(\kernel\) in the second argument) the mapping
\(\bar{\bd{K}}\) is injective and \(\bar{\mu}\) is unique, which directly implies that \(\bar{u}\)
is unique.
\end{proof}

The previous assumption can be guaranteed in several settings, and is commonly imposed for
the purpose of error analysis, see, e.g.,~\cite{Duval2015}.
For instance, this condition holds if the operator~$K$ is injective as in, e.g.,
sparse initial value identification problems~\cite{sparseinitial}.
Furthermore, if~\eqref{exp:finitelymanpoints} holds, then we note that the linear
independence of~\eqref{eq:linindset} is in fact necessary for the existence of a unique
sparse minimizer with nonzero coefficient functions. More in detail,
if~$\bar{u}=\sum^{N}_{n=1} \bar{\bd{u}}_n\, \delta_{\bar{x}_n}$
with~$\hnorm{\bar{\bd{u}}_n}>0$ is a minimizer to~\eqref{def:pdaproblem}
and~\eqref{eq:linindset} is linearly dependent then there exists another minimizer~$\widetilde{u}$
to~\eqref{def:pdaproblem} with~$\#\supp \widetilde{u} < N$; see, e.g.,~\cite{walter2017Helmholtz}.
Finally, recalling the definition of~$\bar{\bd{K}}$
from~\eqref{eq:Kmatrix}, the vector \(\bar{\mu}\) is the unique minimizer of
\(F(\bar{\bd{K}}\mu)+ G(\abs{\mu}_{\ell^1})\) over \(\mu \in \R^N\)
with \(\mu \geq 0\) and satisfies
\begin{align*}
F\left(\bar{\bd{K}}\mu\right)+ G\left(\abs{\mu}_{\ell^1}\right)
 \geq F\left(\bar{\bd{K}}\bar{\mu}\right) + G\left(\abs{\bar{\mu}}_{\ell^1}\right) + \theta \,
  \abs{\mu-\bar{\mu}}^2_{\R^N}
\quad\text{for all } \mu \in \R^N, \mu \geq 0
\end{align*}
for some \(\theta>0\)
if and only if~$\bar{\bd{K}}$ has full column rank (i.e.\ if~\eqref{eq:linindset} is
linearly independent)
and \(F\) fulfills a strong convexity condition on the image of~\(\bar{\bd{K}}\).
We will use this later to estimate the error of the optimal coefficients~$\bar{\bd{u}}_n$
(see Proposition~\ref{prop:locconvofnorms} below for details).
Similar assumptions are used in the convergence analysis of semismooth Newton methods for
problems with~$\ell^1$-regularization; see~\cite[p.~18]{milzarekfilter},~\cite[Example~4.3.9]{milzarek16}.

In order to ensure the stability of the location points of approximations
to~$\bar{u}$ and to be able to quantify the error,
we require a further strengthened form of the optimality conditions.
First, we introduce an appropriate neighborhood of the optimal support points: with
Assumption~\ref{ass:strongsource1} there exists a radius~$R>0$ such that
\begin{align} \label{separation}
\Omega_R \coloneqq \bigcup^N_{n=1} B_R(\bar{x}_n) \subset \operatorname{int} \Omega,
 \quad \bar{B}_{R}(\bar{x}_i) \cap \bar{B}_{R}(\bar{x}_n)= \emptyset,
  \text{ for all } i \neq n 
\quad\text{and } \bar{P}(x) \geq \frac{\bar{\lambda}}{2} \text{ for all }x\in \Omega_R.
\end{align}
Now, on this neighborhood of the support points, we impose the following additional smoothness requirements on the kernel:
\begin{align}
\label{eq:Ctwo_kernel}
\kernel(\cdot,\bd{u}) \in \Cc^2(\bar{\Omega}_R,Y)
\quad\text{with } \sup_{x\in \bar{\Omega}_R, \norm{\bd{u}} \leq 1}
\norm{\nabla_x^2\kernel(x,\bd{u})}_{Y^{d\times d}} < \infty.
\end{align}
This has several consequences.
First, we observe that this implies that
\(\Kstar y \in \Cc^2(\bar{\Omega}_R, H)\) for any \(y \in Y\) and 
therefore~$\bar{p} \in \Cc^2({\bar{\Omega}_R}, H)$.
Next, we note that the \(H\)-norm~$\hnorm{\cdot}$ is two times continuously Fr\'echet differentiable at
every~$\bd{u}\in H,~\bd{u}\neq 0$. Thus if~$p \in\Cc^2({\bar{\Omega}_R}, H)$
satisfies~$\hnorm{p(x)} \geq c$ for some~$c>0$ and all~$x \in \bar{\Omega}_R$, then
the composition~$P(x)=\hnorm{p(x)}$ is in~$\Cc^2(\bar{\Omega}_R)$. In particular
this implies~$\bar{P}\in \Cc^2({\bar{\Omega}_R})$.
Since \(\bar{x}_n\) are the maximizers of \(\bar{P}\), it holds
\[
\nabla \bar{P}(\bar{x}_n)=0, \quad n=1,\ldots,N.
\]
Finally, we impose the main requirement for the analysis from this section:
we assume that the curvature of~$\bar{P}$ around its global maximizers does not degenerate. 
\begin{assumption}
  \label{ass:strongsource2}
There holds~$\supp \bar{u}=\{\,\bar{x}_n\,\}^N_{n=1}$, i.e.~$\hnorm{\bar{\bd{u}}_n}>0$
for~$n=1, \dots,N$.
Furthermore, the kernel fulfills~\eqref{eq:Ctwo_kernel} for a radius~$R>0$
satisfying~\eqref{separation} and there is a $\theta_0 > 0$ such that for all
$n = 1,\ldots,N$ it holds:
  \begin{align}\label{eq:negdefiniteness}
    -\inner*{\xi,\,\nabla^2 \bar{P}(\bar{x}_n)\,\xi}_{\R^d} \geq \theta_0 |\xi|^2_{\R^d}
    \quad \text{for all } \xi \in \R^d.
  \end{align}
\end{assumption}

Let us briefly motivate the last assumption and recall similar
concepts from the literature.
First we point out~$\bar{P}(\bar{x}_n) = \max_{ x\in \Omega} \bar{P}(x)$.
Hence,~\eqref{eq:negdefiniteness} corresponds to a
\emph{second order sufficient condition (SSC)}
for the global maximizers of~$\bar{P}$. In particular, this is equivalent to the quadratic
growth
\begin{align*}
\bar{P}(\bar{x}_n)-\bar{P}(x) \geq \frac{\theta_0}{2} \,|x-\bar{x}_n|^2_{\R^d}
\end{align*}
of~$\bar{P}$ for all~$x$ in the vicinity of~$\bar{x}_n$
(see Lemma~\ref{lem:QuadraticGrowth} below for details).
This will allow us to derive estimates on the support points of approximations
to~$\bar{u}$ by perturbation results for the dual certificate~$\bar{P}$.
In the context of super-resolution the conditions of Assumptions~\ref{ass:strongsource2} (for the case
of~$H=\R$) are referred to as a \textit{non-degeneracy} source condition for the measure~$\bar{u}$;
cf.~\cite{Duval2015, duvalnon}. Furthermore, we recall the connection of sparse
minimization problems to state constrained optimization;
cf.~\cite{clason2011duality}. From this point of view the equality condition
on~$\supp{u^k}$ corresponds to a \emph{strict complementarity} assumption on the Lagrange multiplier
associated to the state constraint. Moreover, in this case the definiteness assumption on
the Hessian of~$\bar{P}$ can be interpreted as a condition on the curvature of the optimal
state around those points in which it touches the constraint. Both of these conditions are
well-established in the field of semi-infinite optimization. We refer to,
e.g.,~\cite{merinoneitzel1} where similar assumptions are used to derive finite element
error estimates. In~\cite{shapiroextremalvalue} comparable conditions are imposed
to derive second order optimality conditions for semi-infinite optimization problems.

\section{Algorithmic solution} \label{sec:algo}

In this section we discuss the numerical solution of~\eqref{def:pdaproblem} with a
conceptually simple algorithm operating on sparse finitely supported
measures~\eqref{eq:dirac_delta}. It consists of the repeated insertion of a single new point at
the maximum of a dual variable, the full resolution of a convex optimization problem on
the current support, and the subsequent removal of all zero coefficients associated to the
current support.

To describe the algorithm, 
we consider an active set of distinct points~$\mathcal{A}=\left\{\,x_n \in
  \Omega\;|\;n=1,\dots,\#\mathcal{A}\right\}$ and the
associated parametrization~$U_{\mathcal{A}}$ defined by
\begin{align} \label{parametrizationpdap}
 U_{\mathcal{A}} \colon H^{\#\mathcal{A}} \to \M(\Omega,H),
 \quad U_{\mathcal{A}}(\bd{u}) = \sum^{\#\mathcal{A}}_{n=1} \bd{u}_n\delta_{x_n}.
\end{align}
The convex optimization problem at the core of the algorithm arises from fixing the
support of the measure to the set~$\mathcal{A}$ and considering only the convex problem
for the coefficients
on the Hilbert space~$H^{\#\mathcal{A}}$:
\begin{align}
\label{eq:subprobpdap}\tag{$\mathcal{P}_{\mathcal{A}}$}
\min_{\bd{u}\in H^{\# \mathcal{A}}} j\left(U_{\mathcal{A}}(\bd{u})\right)
 = F\left(\sum_{n=1}^{\#\mathcal{A}} \kernel(x_n,\bd{u}_n)\right) +
  G\left(\sum_{n=1}^{\#\mathcal{A}} \norm{\bd{u}_n}_H\right)
\end{align}
Clearly~\eqref{eq:subprobpdap} corresponds to \eqref{def:pdaproblem} with the support of
the optimization variable restricted to \(\mathcal{A}\).

\subsection{Primal Dual Active Point Strategy} \label{subsec:PDAP}
The algorithm updates the active point set~\(\mathcal{A}_k = \supp u^k\) 
corresponding to the current iterate \(u^k\) in every iteration \(k = 1,2,\ldots\) by adding a single point \(\widehat{x}^k\),
found at the global maximum of the current dual
\[
P^k(x) = \hnorm{p^k(x)}, \quad\text{where } p^k = -\nabla f(u^k).
\]
Subsequently, the convex sparse optimization problem~\eqref{eq:subprobpdap}, is solved on
the updated support, and entries from the active set with zero coefficient are pruned.
We point out that the solution of~\eqref{eq:subprobpdap} is sparse due to the
sparsity promoting cost term,
i.e.\ several coefficients of \(\bd{u}\) may be zero.
The full procedure is outlined in Algorithm~\ref{alg:PDAPgeneral}.

\begin{algorithm}
\begin{algorithmic}
 \Require Initial $u^0 \in \M(\Omega,H)$ with finite support $\mathcal{A}_0 = \supp u^0$.
 \For {$k=0,1,2,\ldots$}
 \State 1. Compute $p^k = -\nabla f(u^k) = -\Kstar \nabla F(K u^k)$. Determine 
 \begin{align*}
   \widehat{x}^k  \in \Omega \quad\text{with}\quad \hnorm{p^k(\widehat{x})} = \cnorm{p^k}
   = \max_{x\in\Omega} P^k(x) 
 \end{align*}
 \vspace{-1em}
 \State 2. Set $\mathcal{A}_{k+1/2} = \mathcal{A}_{k} \cup \set{\widehat{x}^k}$.
 \State 3. Compute a solution \(\bd{u}^{k+1} \in H^{\# \mathcal{A}_{k+1/2}}\)
 of~\eqref{eq:subprobpdap} with $\mathcal{A} = \mathcal{A}_{k+1/2}$.
  \State 4. Set \(u^{k+1} = U_{\mathcal{A}_{k+1/2}}(\bd{u}^{k+1})\)
  and $\mathcal{A}_{k+1} = \supp u^{k+1}$.
 \EndFor
 \Comment{\emph{Comment:} Can terminate if
   \(\cnorm{p^{k}} - \lambda^{k}
   \leq \mathrm{TOL}\)}
\end{algorithmic}
\caption{Primal-Dual-Active-Point strategy (PDAP)}\label{alg:PDAPgeneral}
\end{algorithm}

In the following, we analyze the iterates \(u^k\) of Algorithm~\ref{alg:PDAPgeneral}
without any termination criterion, which generates an infinite sequence \(k\to \infty\),
and analyze their structure and convergence.
For this, we require the first order necessary optimality conditions for
solutions to the coefficient optimization
problem~\eqref{eq:subprobpdap} in step 3.\ of Algorithm~\ref{alg:PDAPgeneral}, which are given as follows.
\begin{proposition} \label{prop:optimalityforsubproblems}
Let~$k\geq 1$ and~$\mathcal{A} \coloneqq \mathcal{A}_{k-1/2} = \left\{\,x_i \in \Omega
  \;|\;i=1,\ldots,\#\mathcal{A}_{k-1/2}\,\right\}$ be the active set in iteration $k-1$ of
Algorithm~\ref{alg:PDAPgeneral}. Accordingly,
denote by~$\bd{u}^{k} \in H^{\#\mathcal{A}}$ the optimal solution
to~\eqref{eq:subprobpdap}. Define the next iterate~$u^k \coloneqq U_{\mathcal{A}}(\bd{u}^{k})$, dual variable~$p^k \coloneqq -\nabla f(u^k)$ and~$\lambda^k \coloneqq \max_{x \in \mathcal{A}_{k-1/2}} \hnorm{p^k(x)}$. Then there holds
\begin{align}\label{eq:equalityforsubs}
\lambda^k \in \partial G(\mnorm{u^k}),
\quad \pair{p^k, u^k} = \lambda^k \mnorm{u^k}.
\end{align}
If~$\lambda^k > 0$ this implies
\begin{align*}
\hnorm{p^k(x_i)} = \lambda^k
 \quad\text{and}\quad
\frac{\bd{u}^k_i}{\hnorm{\bd{u}^k_i}} = \frac{{p}^k(x_i)}{\lambda^{k}}
\quad\text{for all } i = 1,\ldots,\#\mathcal{A}\text{ with } \hnorm{\bd{u}^k_i} > 0.
\end{align*}
\end{proposition}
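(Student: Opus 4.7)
The plan is to recognize that the coefficient subproblem~\eqref{eq:subprobpdap} is structurally identical to~\eqref{def:pdaproblem}, only with the ambient space $\M(\Omega,H)$ replaced by the subspace $\M(\mathcal{A},H)$ of measures supported on the finite active set $\mathcal{A} = \mathcal{A}_{k-1/2}$. Via the isomorphism $U_{\mathcal{A}}\colon H^{\#\mathcal{A}} \to \M(\mathcal{A},H)$ from~\eqref{parametrizationpdap}, it coincides with~\eqref{def:pdaproblem} posed on the compact set $\mathcal{A}$ in place of $\Omega$, with the same data $F$, $G$, and kernel $\kernel$. Since Assumption~\ref{ass:PDAP} involves only $F$, $G$, and the weak*-to-strong continuity of $K$, all hypotheses transfer verbatim, so Theorem~\ref{thm:optimalityconditionequPDAP} and Corollary~\ref{corr:finitesupportedpdap} become available for the restricted problem.

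Next, I would identify the dual variable of the restricted problem. Using the definition of $\Kstar$ and the chain rule, the Fr\'echet derivative of the smooth part $\bd{u} \mapsto f(U_{\mathcal{A}}(\bd{u}))$ on $H^{\#\mathcal{A}}$ has $i$-th component $-p^k(x_i)$. Equivalently, the restricted problem's dual variable, viewed as an element of $\Cc(\mathcal{A},H)$, is precisely $p^k$ restricted to $\mathcal{A}$, whose supremum norm on $\mathcal{A}$ equals $\lambda^k$ by the definition in the proposition. Moreover, for any $u \in \M(\mathcal{A},H) \subset \M(\Omega,H)$, both the duality pairing $\pair{p^k,u}$ and the total variation norm $\mnorm{u}$ are unchanged whether computed on $\mathcal{A}$ or on $\Omega$, since $u$ is supported in $\mathcal{A}$.

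Applying Theorem~\ref{thm:optimalityconditionequPDAP} to the restricted problem with this identification therefore yields exactly $\lambda^k \in \partial G(\mnorm{u^k})$ and $\pair{p^k,u^k} = \lambda^k \mnorm{u^k}$, which is~\eqref{eq:equalityforsubs}. For the sharper statement in the case $\lambda^k > 0$, I would invoke Corollary~\ref{corr:finitesupportedpdap}, again on the restricted problem: the measure $u^k = U_{\mathcal{A}}(\bd{u}^k) = \sum_i \bd{u}^k_i \delta_{x_i}$ is automatically of the finite-support form required by the corollary, and the corollary then directly produces both $\hnorm{p^k(x_i)} = \lambda^k$ and the alignment $\bd{u}^k_i/\hnorm{\bd{u}^k_i} = p^k(x_i)/\lambda^k$ for every index $i$ with $\bd{u}^k_i \neq 0$. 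The only piece of bookkeeping requiring care is the identification of the restricted problem's dual certificate with the restriction of $p^k$; this is immediate from the linearity of $\kernel$ in its second argument and the definition of $\Kstar$, so no genuine obstacle arises and the proof reduces to a clean application of the already established optimality characterizations.
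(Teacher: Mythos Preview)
Your proposal is correct and follows essentially the same approach as the paper: recognize that~\eqref{eq:subprobpdap} is the restriction of~\eqref{def:pdaproblem} to $\M(\mathcal{A},H)$ via the isometric isomorphism $U_{\mathcal{A}}$, identify the restricted dual variable as $p^k|_{\mathcal{A}}$ with sup-norm $\lambda^k$, and then apply Theorem~\ref{thm:optimalityconditionequPDAP} and Corollary~\ref{corr:finitesupportedpdap} with $\Omega$ replaced by $\mathcal{A}$. The paper's proof is slightly more terse but makes exactly the same identifications.
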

\begin{proof}
The optimality conditions are obtained analogously to Corollary~\ref{corr:finitesupportedpdap}.
To this end note that for the given \(\mathcal{A}\) the mapping~\eqref{parametrizationpdap} can be
understood as an isometric isomorphism
\begin{align*}
U_{\mathcal{A}}
\colon (H^{\#\mathcal{A}}, \|\cdot\|_{\ell^1(H)} ) \to \M(\mathcal{A},H)
\end{align*}
where \(\M(\mathcal{A},H) \subset \M(\Omega,H)\) is the space of vector measures supported
on \(\mathcal{A}\) and the~$\ell^{1}(H)$ norm of~$\bd{u}\in H^{\#\mathcal{A}}$ is given by \(\sum_n \hnorm{\bd{u}_n}\).
Moreover the operator~$K$ can be restricted to a linear continuous operator
\begin{align*}
K \rvert_{\mathcal{A}} \colon  \M(\mathcal{A}, H) \to Y, \quad U_{\mathcal{A}}(\bd{u})
  \mapsto K U_{\mathcal{A}}(\bd{u})  = \sum^{\# \mathcal{A}}_{i=1} \kernel (x_i, \bd{u}_i).
\end{align*}
Thus~$u^k =U_{\mathcal{A}}(\bd{u}^k)$ is a solution
to~$\min_{u \in\mathcal{M}(\mathcal{A},H)} j(u)$ where~$j$ is restricted to vector measures
supported on~$\mathcal{A}$. The claimed conditions follow now from
Theorem~\ref{thm:optimalityconditionequPDAP} and
Corollary~\ref{corr:finitesupportedpdap} by replacing \(\Omega\) with \(\mathcal{A}\) and
realizing that the dual variable for the restricted problem is the restriction of the dual
variable \(p^k = -\nabla f(u^k)\) to \(\mathcal{A}\).

\end{proof}

We note the difference between the quantity \(\lambda^k\) from~\eqref{eq:equalityforsubs},
which is the maximum of \(P^k(x) = \hnorm{p^k(x)}\) over \(x \in \mathcal{A}_{k-1/2}\),
and the quantity \(\cnorm{p^k} = P^k(\widehat{x}^k)\), which is the maximum of \(P^k\)
over \(\Omega\).
Now, it is clear that
\begin{align}
\label{eq:primal_dual_PDAP}
0 \leq \cnorm{p^{k}} - \lambda^{k} = P^k(\widehat{x}^k) - \max_{x\in \mathcal{A}_{k-1/2}} P^k(x),
\end{align}
and that \(u^k\) is optimal for the original problem~\eqref{def:pdaproblem} if and only if
\(\cnorm{p^{k}} - \lambda^{k} = 0\). In fact, in this case the conditions from
Proposition~\ref{prop:optimalityforsubproblems} imply the (sufficient) optimality
conditions from Theorem~\ref{thm:equivalentoptoconditionsgeneral} (cf.\ Corollary~\ref{corr:finitesupportedpdap}).

By Proposition~\ref{prop:optimalityforsubproblems}, \(P^k(x) = \lambda^k\) holds for all
\(x \in \mathcal{A}_k = \supp u^k
= \{\, x_i \in \mathcal{A}_{k-1/2} \;|\; \bd{u}_i^k \neq 0\,\}\). 
Thus, if \(\mathcal{A}_k \neq \emptyset\) or equivalently \(u^k \neq 0\) it also holds
\begin{align}
\label{def:lambdak_2}
  \lambda^k = \max_{x \in \mathcal{A}_k} P^k(x) = P^k(x_i) \quad\text{for all } x_i \in \mathcal{A}_k,
\end{align}
since \(\mathcal{A}_k\) contains only the support points of \(u^k\).
Associated to those support points we denote the coefficients of \(u^k\) by \(u^k(x_i) \coloneqq
u^k(\{x_i\})\) (by a slight abuse of notation), and there holds further
\begin{align}
\label{form:subprobOptimality}
  \frac{u^k(x_i)}{\norm{u^k(x_i)}} = \frac{p^k(x_i)}{\lambda^k} \quad\text{for all } i \in \mathcal{A}_k,
\end{align}
again from Proposition~\ref{prop:optimalityforsubproblems}.
Moreover, it is apparent that Algorithm~\ref{alg:PDAPgeneral} is a descent method: In fact
step~3.\ implies that
\begin{equation}
\label{eq:descent}
j(u^{k+1}) = j(U_{\mathcal{A}_{k+1/2}}(\bd{u}^{k+1}))
\leq
j(U_{\mathcal{A}_{k+1/2}}(\bd{u}^k)) = j(u^k),
\end{equation}
decays monotonically along the iterates, \(j(u^{k+1}) \leq j(u^k)\).

We will show in section~\ref{sec:improvconv} that \(j(u^k) \to j(\bar{u})\) for
\(k \to \infty\), which implies 
the weak* convergence of the iterates \(u^k \rightharpoonup^* \bar{u}\).
However, we note that this notion of convergence requires careful interpretation. For
instance, we cannot expect the coefficients of the iterate measure \(u^k\) to converge
towards coefficients of \(\bar{u}\).
In fact, \(u^k\) is allowed to have much more support
points than \(\bar{u}\),
  i.e.\ \(N_k \coloneqq \#\mathcal{A}_k \gg \#\supp \bar{u} = N\),
where multiple support points \(x^k_j\) approximate
asymptotically a single support point \(\bar{x}_n\).
Instead, we assume for the moment that the balls around the optimal support points
from~\eqref{separation} are known and define for each support point of the
exact solution \(\bar{x}_n\) the ``lumped'' coefficients \(\bd{U}^k_n
= u^k(B_{R}(\bar{x}_n))\),  Then, the lumped coefficients fulfill
\begin{align}
\label{eq:lumped_coefficients}
\bd{U}^k_n = u^k(B_{R}(\bar{x}_n))
=
\sum_{x_i \in \mathcal{A}_k \cap B_R(\bar{x}_n)} u^k(x_i) 
\to \bar{u}(B_{R}(\bar{x}_n)) = \bar{u}(\bar{x}_n),
\quad\text{for } k\to\infty,\; n = 1,\ldots,N.
\end{align}
Moreover, we can choose \(X_n^k\) as an arbitrary convex combination of
\(\mathcal{A}_k \cap B_R(\bar{x}_n)\) and replace \(u^k\) with the
``lumped'' measure
\begin{align}
\label{eq:posprocessed_iterate}
\widetilde{u}^k = \sum_{n=1}^N \bd{U}^k_n \delta_{X_n^k} \rightharpoonup^* \bar{u}
\quad\text{for } k\to\infty,
\end{align}
which still converges to \(\bar{u}\) in the weak* sense, but has the correct number of
support points.

We note that the construction of the ``lumped'' coefficients supposes knowledge of the balls
\(B_R(\bar{x}_n)\). This is  sufficient for the purposes of the error analysis
of the next section but problematic in practical computations, where neither the number \(N\) nor
these balls are known \emph{a priori}. In practice, these balls could be estimated from the
knowledge of the approximate solution \(u^k\) and \(p^k\) \emph{a posteriori}, but we do not
pursue this here.


\section{Convergence analysis} \label{sec:improvconv}
We now provide a convergence analysis for Algorithm~\ref{alg:PDAPgeneral}.
It requires a final set of conditions imposed on the functional~\(F\), for which we
introduce additional notation:
For~$u \in \dom j$ define the sublevel set
\begin{align} \label{def:sublevset}
E_j(u)= \left\{\,v \in \M(\Omega,H)\;|\;j(v)\leq j(u)\,\right\} \subset \dom j
\end{align}
as well as its image set
\begin{align} \label{def:imsublevset}
KE_j(u)\coloneqq \left\{\,Kv\;|\;v \in E_j(u)\,\right\} \subset \dom F.
\end{align} 
The set $E_j (u)$ is weak* compact since~$j$ is radially unbounded
and weak* lower semicontinuous (Assumption~\ref{ass:PDAP}). Consequently,
since~$K$ is weak*-to-strong continuous,~$KE_j(u)$ is compact.
Observe that Algorithm~\ref{alg:PDAPgeneral} is a descent method
due to~\eqref{eq:descent} and thus~$u^k \in E_j(u^0)$ for all $k \geq 0$.
For the convergence analysis we impose the following additional assumptions on \(F\), which
are weaker than global Lipschitz continuity of its gradient and strong convexity.
\begin{assumption} \label{ass:regularF}
For every $u \in \dom j$ the gradient~$\nabla F$ is Lipschitz continuous on the image set~$KE_j(u)$: There exists a constant~$L_{u}$ only depending on~$j(u)$ with 
\begin{align}
\label{eq:lipgrad}
\norm{\nabla F(y_1)-\nabla F(y_2)}_Y
    &\leq L_{u} \norm{y_1-y_2}_Y \quad \text{for all } y_1,y_2 \in K E_j(u).
\end{align}
Moreover~$F$ is strongly convex around the optimal
observation~$\bar{y} = K\bar{u} \in \dom F$,
i.e.\ there exist a neighborhood~$\mathcal{N}(\bar{y}) \subset \dom F$ of~$\bar{y}$ in~$Y$
and a constant~$\gamma_0 >0$ with
\begin{align}
\label{eq:uniformconvexity}
\inner*{\nabla F(y_1) - \nabla F(y_2), y_1 -y_2}_Y \geq \gamma_0 \ynorm{y_1-y_2}^2
\quad \text{for all } y_1,y_2 \in N(\bar{y}).
\end{align}  
\end{assumption}
It is clear that a quadratic \(F\) as in~\eqref{eq:Psource} fulfills these
conditions. However, in cases where the domain of \(F\)
is a proper subset of \(Y\) (cf., e.g., \cite{walter2018sensor}), \(F\) fulfills
neither~\eqref{eq:lipgrad} nor~\eqref{eq:uniformconvexity} uniformly for
all~\(y_1,y_2 \in Y\), and requires the weaker form given above.

In order to make the following presentation more transparent we state the main result of
this section beforehand:
The following theorem provides linear convergence for
the residual \(r_j(u^k)\) of the functional defined as
\begin{equation}
\label{eq:residual}
r_j(u) \coloneqq j(u) - j(\bar{u})
\end{equation}
along the iterates~$u^k$, 
the support set \(\mathcal{A}_k\), and the lumped coefficients of
the iterates introduced in~\eqref{eq:lumped_coefficients}.
\begin{theorem} \label{thm:fastconvergencepdapdisclaimer}
Suppose that Assumption~\ref{ass:PDAP}, \ref{ass:strongsource1},
\ref{ass:strongsource2}, and~\ref{ass:regularF} hold and
let the sequence~$u^k$ be generated by Algorithm~\ref{alg:PDAPgeneral}
started at~$u^0$.
Recall the definition of the balls \(B_R(\bar{x}_n)\) and their union \(\Omega_R\)
from~\eqref{separation}.
Then, there exists a constant~$\bar{k}\geq 1$ with
\begin{align*}
  \mathcal{A}_k = \supp u^k \subset \Omega_R
  \quad\text{and } \mathcal{A}_k \cap B_{R}(\bar{x}_n)\neq \emptyset
\quad\text{for all } n=1,\dots,N. 
\end{align*}
Moreover, there exist $c \geq 0$ and~$\zeta \in(0,1)$, such that for all~$k \geq \bar{k}$:
\begin{align*}
  r_j(u^k)
  + \max_{n=1, \dots,N} \max_{x \in \mathcal{A}_k \cap B_{R}(\bar{x}_n)} \abs{x-\bar{x}_n}_{\R^d}
  + \max_{n=1, \dots,N} \hnorm{\bar{u}(\bar{x}_n) - u^k(B_{R}(\bar{x}_n))}  \leq c \, \zeta^k.
\end{align*}
Finally, the we have \(u^k \rightharpoonup^* \bar{u}\) in \(\M(\Omega, H)\), and the
error decays linearly in the dual space \(\Cc^{0,1}(\Omega,H)^*\) of the space of \(H\) valued
Lipschitz continuous functions on \(\Omega\):
\(\norm{u^k - \bar{u}}_{\Cc^{0,1}(\Omega,H)^*} \leq c \, \zeta^k\).
\end{theorem}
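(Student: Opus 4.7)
The statement aggregates four assertions of increasing strength: functional convergence, localization of the support in $\Omega_R$, quantitative rates on support points and (lumped) coefficients, and a dual-norm bound. My plan is to proceed bottom up, first establishing an asymptotic regime in which the PDAP iterate has the same combinatorial structure as $\bar u$ (i.e., exactly $N$ clusters, one around each $\bar x_n$), and then running a local perturbation analysis in this regime.

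The first step is to invoke the sublinear $\mathcal{O}(1/k)$ rate for the objective (the earlier Theorem~\ref{thm:convergenceGCGmeas}, which applies in particular to PDAP due to the descent inequality~\eqref{eq:descent}). This yields $r_j(u^k)\to 0$, and via the weak* lower semicontinuity of $j$ together with the strong convexity of $F$ on a neighborhood of $\bar y$ (Assumption~\ref{ass:regularF}), the uniqueness of $\bar y$ from Proposition~\ref{prop:unique_obs} gives $Ku^k\to\bar y$ strongly in $Y$. The weak*-to-strong continuity of $\nabla f$ then forces $p^k\to\bar p$ uniformly on $\Omega$, and hence $P^k\to \bar P$ uniformly. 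Since $\bar P$ attains its maximum $\bar\lambda$ only at the isolated points $\{\bar x_n\}$, uniform convergence together with the quadratic growth around each $\bar x_n$ (consequence of Assumption~\ref{ass:strongsource2}) shows that any near-maximizer of $P^k$ lies inside $\Omega_R$ for $k$ large. Combined with the algorithmic fact from Proposition~\ref{prop:optimalityforsubproblems} that $\hnorm{p^k(x)}=\lambda^k$ holds on $\mathcal{A}_k$ and the identity~\eqref{def:lambdak_2}, this establishes $\mathcal{A}_k\subset \Omega_R$ for $k\geq \bar k$. The fact that each $B_R(\bar x_n)$ is \emph{hit} by $\mathcal{A}_k$ eventually is trickier; I would argue by contradiction, showing that otherwise the dual variable of the corresponding limit measure could not satisfy both the optimality of $\bar u$ and the linear independence assumption~\eqref{eq:linindset}, contradicting Proposition~\ref{prop:uniquenesspdap}. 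In effect one shows that whenever some $\bar x_n$ is unseen by $\mathcal{A}_k$, the global maximum $P^k(\widehat x^k)$ exceeds the currently attained $\lambda^k$ by a quantity that is bounded below uniformly, preventing the optimality gap~\eqref{eq:primal_dual_PDAP} from vanishing.

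Once the correct cluster structure is available, the heart of the proof is the linear rate for $r_j(u^k)$. The plan is to compare the GCG-style insertion step (which already yields the $\mathcal{O}(1/k)$ decrease) with an improved descent obtained by exploiting the local strong convexity of $F$ and the quadratic growth of $\bar P$. Concretely, for the lumped measure $\widetilde u^k$ from~\eqref{eq:posprocessed_iterate} one parametrizes the reduced problem on the fixed sparsity pattern $\{\bar x_n\}$ via the finite-dimensional operator $\bar{\bd K}$ from~\eqref{eq:Kmatrix}; Assumption~\ref{ass:strongsource1} makes $\bar{\bd K}$ injective, so combined with~\eqref{eq:uniformconvexity} the reduced functional satisfies a quadratic growth condition in the coefficients $\mu\in\R^N$ (this is exactly the strong convexity condition discussed after Proposition~\ref{prop:uniquenesspdap}). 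For the positions, the same growth in the $\bar x_n$-direction follows from the SSC~\eqref{eq:negdefiniteness} via the quadratic lower bound on $\bar\lambda - \bar P$ near each $\bar x_n$ (this is where I expect to use the auxiliary lemma referenced as Lemma~\ref{lem:QuadraticGrowth}). Putting these together yields an inequality of the form $r_j(u^k) \leq c_1\bigl(\text{support error}\bigr)^2 + c_2\bigl(\text{coefficient error}\bigr)^2$, while the PDAP decrease per step can be bounded below by a multiple of $r_j(u^k)$ using the primal-dual gap $\cnorm{p^k}-\lambda^k$ and the fact that the new point $\widehat x^k$ is inserted at the true maximum. This gives a contraction $r_j(u^{k+1})\leq \zeta \,r_j(u^k)$ with $\zeta\in(0,1)$, i.e., the claimed linear rate $\mathcal{O}(\zeta^k)$.

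Linear rates for the other quantities follow by turning the above into a two-sided estimate. For positions: the local quadratic lower bound $\bar P(\bar x_n) - \bar P(x)\geq (\theta_0/2)\abs{x-\bar x_n}_{\R^d}^2$, combined with the estimate $\|P^k-\bar P\|_{\Cc(\Omega_R)}\lesssim \|Ku^k-\bar y\|_Y \lesssim \sqrt{r_j(u^k)}$, forces any $x\in\mathcal{A}_k\cap B_R(\bar x_n)$ (which satisfies $P^k(x)=\lambda^k$) to lie within distance $\mathcal{O}(\zeta^{k/4})$ of $\bar x_n$; tightening by plugging this back into the strong-convexity inequality for the coefficients gives the stated $\zeta^k$ rate after redefinition of $\zeta$. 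For coefficients, using the full column rank of $\bar{\bd K}$ yields $|\bd U^k - \bar{\bd u}|_{\R^N}\lesssim \|K\widetilde u^k - \bar y\|_Y + (\text{position errors})$, which again gives a linear rate. Finally, for the dual-norm statement, any $\varphi\in\Cc^{0,1}(\Omega,H)$ with unit Lipschitz norm can be used to bound $\pair{\varphi,u^k-\bar u}$: decomposing $u^k$ ball-wise about each $\bar x_n$ and exploiting that $\varphi(x)-\varphi(\bar x_n)$ has Lipschitz modulus $1$, one obtains
\begin{equation*}
\abs{\pair{\varphi,u^k-\bar u}}
 \leq \sum_{n=1}^N \hnorm{\bd U^k_n-\bar{\bd u}_n}\,\cnorm{\varphi}
 + \sum_{n=1}^N \sum_{x_i\in\mathcal{A}_k\cap B_R(\bar x_n)}\hnorm{u^k(x_i)}\,\abs{x_i-\bar x_n}_{\R^d},
\end{equation*}
both of which are $\mathcal{O}(\zeta^k)$. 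The weak* convergence $u^k\rightharpoonup^* \bar u$ is then an automatic consequence since $\Cc^{0,1}(\Omega,H)$ is dense in $\Cc(\Omega,H)$ and the iterates have bounded total variation (by radial unboundedness of $j$ and the descent property).

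The main obstacle is the second step, namely establishing that \emph{every} optimal support point is eventually represented in $\mathcal{A}_k$: the sublinear rate alone does not preclude the algorithm from oscillating between neighborhoods of different $\bar x_n$'s for an unbounded number of iterations. Resolving this requires combining the global maximality of the newly inserted $\widehat x^k$ with a quantitative lower bound on the optimality gap in terms of the missed support points, which in turn rests on Assumption~\ref{ass:strongsource1}.
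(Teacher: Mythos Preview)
Your overall architecture---sublinear rate to enter a local regime, then localization of the support, then a linear contraction for $r_j(u^k)$, then deriving rates for positions, coefficients, and the $\Cc^{0,1}$ dual norm---matches the paper's. The final three items (positions via quadratic growth of $\bar P$ plus $\|P^k-\bar P\|_{\Cc}\lesssim r_j(u^k)^{1/2}$, coefficients via injectivity of $\bar{\bd K}$, and the $\Cc^{0,1}$ bound via your displayed decomposition) are essentially what the paper does in Theorems~\ref{thm:rateforsupppoints}, \ref{thm:convergenceofcoefficients}, \ref{thm:convindualpdap}.

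Two points where you diverge:

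\textbf{Each ball is hit.} You flag this as ``the main obstacle'' and propose a contradiction argument via the gap $\cnorm{p^k}-\lambda^k$. The paper disposes of it in two lines (Corollary~\ref{coroll:localizationofsupp}): take a continuous cutoff $\chi_n$ equal to $1$ on $\bar B_{R'}(\bar x_n)$ and $0$ on the other balls, and use weak* convergence of $u^k$ together with $p^k\to\bar p$ to get $\lambda^k\,\mnorm{u^k\rvert_{B_{R'}(\bar x_n)}}=\pair{\chi_n p^k,u^k}\to\bar\lambda\,\hnorm{\bar u(\bar x_n)}>0$. No linear independence or gap argument is needed here.

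\textbf{The linear rate---this is a genuine gap.} You assert that ``the PDAP decrease per step can be bounded below by a multiple of $r_j(u^k)$ using the primal--dual gap $\cnorm{p^k}-\lambda^k$'', but you do not supply the mechanism, and your upper bound $r_j(u^k)\leq c_1(\text{support error})^2+c_2(\text{coefficient error})^2$ points in the wrong direction for a contraction. The paper's argument is different and hinges on a concrete trial point. One constructs
\[
\uklump \;=\; u^k\rvert_{\Omega\setminus \widehat B_k}\;+\;\widehat\mu^k\,\frac{p^k(\widehat x^k)}{\cnorm{p^k}}\,\delta_{\widehat x^k},
\qquad \widehat\mu^k=\mnorm{u^k\rvert_{\widehat B_k}},
\]
which lumps all mass in the ball $\widehat B_k$ around $\widehat x^k$ into a single Dirac at $\widehat x^k$, keeping the total variation unchanged. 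Then (Proposition~\ref{prop:propertiesOfLump}) $\pair{p^k,\uklump-u^k}=\widehat\mu^k(\cnorm{p^k}-\lambda^k)$, while the key Lemma~\ref{lem:ObservationLumpingSmall} shows
\[
\|K(\uklump-u^k)\|_Y \;\leq\; c\,\widehat\mu^k\,(\cnorm{p^k}-\lambda^k)^{1/2}.
\]
This estimate is what makes the argument work: it comes from the quadratic growth of $P^k$ near $\widehat x^k$ (Lemma~\ref{lem:localmin}), which forces every $x\in\mathcal A_k\cap\widehat B_k$ (where $P^k(x)=\lambda^k$) to satisfy $|x-\widehat x^k|\lesssim(\cnorm{p^k}-\lambda^k)^{1/2}$, together with the Lipschitz continuity of $\Kstar\varphi$ on $\bar\Omega_R$. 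Plugging into the standard descent lemma with $u^{k+1/2}=u^k+s(\uklump-u^k)$ gives a linear term $-s\,\widehat\mu^k(\cnorm{p^k}-\lambda^k)$ and a quadratic term $+c\,s^2(\widehat\mu^k)^2(\cnorm{p^k}-\lambda^k)$ of the \emph{same order} in the gap; hence the optimal $s$ stays bounded away from $0$ and one obtains $j(u^{k+1})\leq j(u^{k+1/2})\leq j(u^k)-\delta\,M_0(\cnorm{p^k}-\lambda^k)\leq j(u^k)-\delta\,r_j(u^k)$ via Lemma~\ref{lem:PsiEstimatePDAP}. This is the step your outline is missing: without the lumped trial point and the $\sqrt{\cdot}$ control on $\|K(\uklump-u^k)\|_Y$, there is no reason the fully-corrective step yields a decrease proportional to $r_j(u^k)$ rather than the GCG-type $r_j(u^k)^2$.
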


Before giving the proof of Theorem~\ref{thm:fastconvergencepdapdisclaimer}, we sketch the
main idea of the proof of the convergence result for the functional beforehand:
Due to the localization result as given in the first
assertion of Theorem~\ref{thm:fastconvergencepdapdisclaimer}, we know that for~\(k\) large
enough, every newly inserted support point~\(\widehat{x}^k\) is contained in exactly one
ball around the support points \(\bar{x}_{n}\), and we will denote the associated index 
\(\widehat{n}_k \in \{\,1,\ldots,N\,\}\) and the ball by
\begin{align}
\label{eq:hat_ball}
\widehat{x}^k \in \widehat{B}_k \coloneqq B_R\left(\bar{x}_{\,\widehat{n}_k}\right).
\end{align}
By perturbation arguments we can estimate the difference of the existing support points
and the new point \(\widehat{x}^k\) to the corresponding optimal location
\(\bar{x}_{\,\widehat{n}_k}\) in terms of the error quantity \(\cnorm{p^{k}} -
\lambda^{k}\) from~\eqref{eq:primal_dual_PDAP}, which bounds the functional residual
\(r_j(u^k)\) up to constant; see subsection~\ref{sec:perturbation_arguments}. 
Then, we define the search direction
\[
\Delta^k \coloneqq
 \widehat{\mu}^k \frac{p^k(\widehat{x}^k)}{\cnorm{p^k}}\delta_{\widehat{x}^k}
 - u^k\rvert_{\widehat{B}_k},
\quad\text{where } \widehat{\mu}^k = \mnorm{u^k\rvert_{\widehat{B}_k}} ,
\]
and the associated trial point
\begin{equation}
\label{eq:uklump}
\uklump \coloneqq u^k  + \Delta^k 
 = u^k\rvert_{\Omega\setminus \widehat{B}_k}
 + \widehat{\mu}^k
 \frac{p^k(\widehat{x}^k)}{\cnorm{p^k}}\delta_{\widehat{x}^k},
\end{equation}
which replaces all Dirac delta functions in \(\widehat{B}^k\) with a single one supported
on \(\widehat{x}^k\), where the magnitude of the coefficient
\(\widehat{\mu}^k\) is maintained, but the direction is taken from the
dual variable at \(\widehat{x}^k\), motivated by the optimality conditions. 
Relying on the fact that the coefficients of \(u^k\) solve the
problem~\eqref{eq:subprobpdap} and the aforementioned perturbation arguments, we can
document the decrease in terms of the objective functional
by defining an intermediate update 
\begin{align}
\label{eq:PDAPintermediate}
u^{k+1/2} \coloneqq u^k + s^k \Delta^k\quad \text{for an appropriate } s^k \in (0,1].
\end{align}
Here, if \(s^k\) is large, we move a large fraction of ``mass'' from the
Dirac-delta functions supported on \(\widehat{B}^k\) over to the
newly inserted point. We show that for appropriately chosen \(s^k\) not only \(j(u^{k+1/2}) \leq j(u^k)\), but
in fact the error decreases linearly according to \(r_j(u^{k+1/2}) \leq \zeta r_j(u^k)\); see
Theorem~\ref{lem:linearrate}.
Clearly, since \(\supp u^{k+1/2} \subset \mathcal{A}_{k+1/2}\) it holds \(j(u^{k+1}) \leq
j(u^{k+1/2})\) and the same estimate follows for \(r_j(u^{k+1})\).

The rest of this section is dedicated to providing the proof of Theorem~\ref{thm:fastconvergencepdapdisclaimer}.
Throughout the derivations, the generic constant~$c>0$ will be chosen different from line to line, but always
independent of the iteration index~$k$.
The plan of the proof is as
follows. First, in subsection~\ref{subsec:worstcase} we establish the global convergence
of Algorithm~\ref{alg:PDAPgeneral} by interpreting it as an accelerated version of a
generalized conditional gradient method. The corresponding theory yields only a
much slower sublinear rate of convergence, since it can not fully exploit the decrease
achieved in step 3.\ of Algorithm~\ref{alg:PDAPgeneral}.
However, the global convergence result at a sublinear rate is necessary for our proof
since it allows us to apply perturbation
arguments based on Assumption~\ref{ass:strongsource2} (which are valid only for \(k \geq
\bar{k}\)) and the improved convergence analysis for the residual is given in
subsection~\ref{subsec:improres}.
We first establish the localization result of the support points in
Corollary~\ref{coroll:localizationofsupp} and then the linear convergence of the
residual in Theorem~\ref{lem:linearrate}. In subsection~\ref{subsec:improiter} the
results for the iterates are derived as consequences; see
Theorem~\ref{thm:rateforsupppoints} for the support points,
Theorem~\ref{thm:convergenceofcoefficients} for the lumped coefficients, and
Theorem~\ref{thm:convindualpdap} for the estimate in the dual norm.

\begin{remark}
It may be tempting to attempt to use the update~\eqref{eq:PDAPintermediate} directly
to replace the resolution of the subproblem in step~3.\ of
Algorithm~\ref{alg:PDAPgeneral}. However, this is not immediately possible for several reasons:
First, the construction of the search direction \(\Delta^k\) requires knowledge of the
ball \(\widehat{B}^k\) based on the exact solution, which is not available in practical
computations. Second, the descent
properties of~\eqref{eq:PDAPintermediate} rely on the fact that \(u^k\) is a previous
iterate computed from~\eqref{eq:subprobpdap} and fulfills~\eqref{def:lambdak_2}
and~\eqref{form:subprobOptimality}. To start,
the choice of \(\widehat{\mu}^k\) leads \(\mnorm{u^{k+1/2}} =
\mnorm{u^k}\) and thus can, on its own, not lead to convergent method.
Moreover, it is beneficial to resolve~\eqref{eq:subprobpdap} fully and obtain
in each step the sparsity condition from Proposition~\ref{prop:optimalityforsubproblems},
since this removes unnecessary support points in step~4.\ and keeps the active set
small and enables the estimate on the points of the active set given above.
\end{remark}

\subsection{Worst-case convergence analysis} \label{subsec:worstcase}
Now, we derive a first convergence result for the sequence~$u^k$ generated by
Algorithm~\ref{alg:PDAPgeneral}.
For this, we rely on existing analysis for generalized conditional gradient
methods. Here, the same point \(\widehat{x}^k\) is inserted at each iteration, but,
in contrast to Algorithm~\ref{alg:PDAPgeneral}, a convex combination of the old iterate and a new
trial iterate supported on \(\widehat{x}^k\) is taken as the update, instead
of solving the subproblem~\eqref{eq:subprobpdap} to obtain the new iterate.

Similar to~\cite{bredies2013inverse}, our derivation relies
on an equivalent surrogate of~\eqref{def:pdaproblem}.
Let $M_0>0$ be an upper bound on the norms of the elements in the set~$E_j(u^0)$, which
exists due to Assumption~\ref{ass:PDAP}(ii.).
For instance, for the indicator function \(G(m) = I_{m\leq M}\) it can be simply chosen as the value of the
constraint \(M_0 \coloneqq M\) and for problems of the form~\eqref{eq:Psource} we can set
$M_0 \coloneqq j(u^0)/\beta$ using \(F \geq 0\).
Note that~$u^k \in E_j(u^0)$ for all~$k
\geq 0$ and thus~$\mnorm{u^k}\leq M_0$.
Consider the norm-constrained problem
\begin{align}
 \label{def:pdaproblemaux}
\min_{u\in \M(\Omega,H),\,\mnorm{u}\leq M_0} j(u). \tag{\ensuremath{\mathcal{P}_{M_0}}}
\end{align}
Clearly, by choice of~$M_0$, the problems~\eqref{def:pdaproblemaux}
and~\eqref{def:pdaproblem} admit the same global minimizers.
Associated to this auxiliary problem we define the gap functional~$\Phi \colon \dom j
\rightarrow \R$ as
\begin{align} \label{primaldualgap}
\Phi(u) \coloneqq \max_{v\in \M(\Omega,H),\, \mnorm{v}\leq M_0}
  \left[\pair{p, v-u} + G(\mnorm{u}) - G(\mnorm{v})\right] \quad \text{where} \quad p=- \nabla f(u).
\end{align}
Due to the additional constraint we can easily see that \(\Phi(u)\) is finite
for~$\mnorm{u}\leq M_0$ and there holds~$\Phi(u)\geq 0$ with equality if and only if~$u$ is a solution to~\eqref{def:pdaproblem}.

The gap functional \(\Phi(u^k)\) corresponds to the gap of the value of the primal function
\(j\) at~\(u^k\) and a dual function value of~\eqref{def:pdaproblemaux} evaluated at \(p^k = -\nabla f(u^k)\)
(see~\cite[Remark~6.4]{Walter:2019}) and is an important quantity for the following analysis.
In particular, it provides an upper bound on the functional residual; see,
e.g.,~\cite[Lemma~6.12]{Walter:2019}.
\begin{proposition}
\label{prop:primal_dual_bound}
For any \(u \in \dom j\) here holds
\[
r_j(u) \leq \Phi(u) < \infty.
\]
\end{proposition}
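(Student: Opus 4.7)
The plan is to prove the two assertions \(r_j(u) \leq \Phi(u)\) and \(\Phi(u) < \infty\) separately, both through short arguments exploiting the convexity of \(F\), the structure of the gap functional, and the boundedness of the feasible set built into~\eqref{primaldualgap}.

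For the inequality, I would start from the gradient inequality for the convex function \(F\) at \(Ku\):
\[
F(K\bar{u}) \geq F(Ku) + \inner{\nabla F(Ku), K\bar{u} - Ku}_Y = F(Ku) - \pair{p, \bar{u} - u},
\]
where the second equality uses the definition \(p = -\nabla f(u) = -\Kstar \nabla F(Ku)\) together with the pairing between \(\Cc(\Omega,H)\) and \(\M(\Omega,H)\). Substituting into the definition \(r_j(u) = [F(Ku) + G(\mnorm{u})] - [F(K\bar{u}) + G(\mnorm{\bar{u}})]\) and cancelling the \(F(Ku)\) terms yields
\[
r_j(u) \leq \pair{p, \bar{u} - u} + G(\mnorm{u}) - G(\mnorm{\bar{u}}).
\]
Next I would verify that \(\bar{u}\) is admissible in the maximization defining \(\Phi(u)\): since \(\bar{u}\) is optimal we have \(j(\bar{u}) \leq j(u^0)\), hence \(\bar{u} \in E_j(u^0)\), and by the choice of \(M_0\) this forces \(\mnorm{\bar{u}} \leq M_0\). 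The right-hand side above is therefore bounded from above by \(\Phi(u)\), which is the claim.

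For the finiteness, the \(M_0\)-constraint in~\eqref{primaldualgap} does the essential work. The linear term is controlled by \(\abs{\pair{p, v - u}} \leq \cnorm{p}\,(M_0 + \mnorm{u})\), and \(G(\mnorm{u})\) is finite because \(u \in \dom j\). To bound \(-G(\mnorm{v})\) uniformly in \(v\) with \(\mnorm{v} \leq M_0\), I would appeal to Assumption~\ref{ass:PDAP}(i.): monotonicity on \(\R_+\), properness, and lower semi-continuity together force \(G\) to be bounded below on any bounded interval, in particular on \([0, M_0]\). Combining the three bounds gives \(\Phi(u) < \infty\).

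No step here presents a genuine obstacle; the result is essentially weak duality for the constrained auxiliary problem~\eqref{def:pdaproblemaux}. The only point requiring care is the feasibility \(\mnorm{\bar{u}} \leq M_0\), which rests on the definition of \(M_0\) via the sublevel set \(E_j(u^0)\) associated to the initial iterate, and hence on the descent property that makes \(\bar{u}\) lie in that sublevel set at all.
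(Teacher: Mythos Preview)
Your proposal is correct and matches the paper's approach: the paper also uses the convexity inequality for \(f\) to bound \(r_j(u)\) by the expression \(\pair{p,\bar u - u} + G(\mnorm{u}) - G(\mnorm{\bar u})\), and then observes that \(\bar u\) is feasible for the maximization defining \(\Phi(u)\). The only difference is that you spell out the finiteness argument explicitly, whereas the paper treats it as already established in the remark immediately preceding the proposition.
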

\begin{proof}
By convexity of \(f\), we have \(f(\bar{u}) - f(u) \geq \pair{\nabla f(u), \bar{u} - u}\)
and thus with \(p = -\nabla f(u)\) it follows
\[
r_j(u) = j(u) - j(\bar{u})
 = \max_{\mnorm{v} \leq M_0} \left[j(u) - j(v)\right]
 \leq \max_{\mnorm{v} \leq M_0} \left[\pair{p, v - u} + G(\mnorm{u}) -
   G(\mnorm{v}) \right]
= \Phi(u).
\qquad\qedhere
\]
\end{proof}
The trial point for the GCG method is defined for a given iterate~$u^k \in \dom j$ as a
maximizer in the maximization problem occurring in the evaluation of \(\Phi(u^k)\). It can be
computed analytically: Let~$\widehat{x}^k \in \Omega$ be, as before, a maximum of the current dual~$p^k=-\nabla f(u^k)$, 
\(\hnorm{p^k(\widehat{x}^k)} = \cnorm{p^k}\), and define
\begin{align} \label{eq:sollinearized}
\widehat{v}^k =
 \begin{cases}
    0 & \cnorm{p^k}=0, \\
    \widehat{m}^k  (p^k(\widehat{x}^k)/\cnorm{p^k}) \, \delta_{\widehat{x}^k} & \text{else},
\end{cases}
\end{align}
where $\widehat{m}^k \in [0,M_0]$ is chosen with
\begin{align} \label{eq:normoflinearized}
\widehat{m}^k \in \begin{cases}
 \{M_0\} & \cnorm{p^k} > \sup \partial G(M_0),\\
 (\partial G)^{-1}(\cnorm{p^k}) & \text{else}.\\
 \end{cases}
\end{align}
The connection of \(\widehat{v}^k\) to  \(\Phi(u^k)\) is proved in the following result.
\begin{proposition} \label{prop:solutionoflinearizedformeas}
Let~$u^k$ be generated by Algorithm~\ref{alg:PDAPgeneral}. Set $p^k = -\nabla f(u^k)$ and  $\widehat{v}^k$ as in~\eqref{eq:normoflinearized}
and~\eqref{eq:sollinearized}. Then~$\widehat{v}^k$ solves
\begin{align}
\label{def:linearizedPDAP}
\min_{\mnorm{v}\leq M_0} \left\lbrack \langle -p^k,v \rangle + G(\mnorm{v}) \right \rbrack,
\end{align}
and thus \(\Phi(u^k) = \pair{p^k, \widehat{v}^k-u^k} + G(\mnorm{u^k}) - G(\mnorm{\widehat{v}^k})\).
\end{proposition}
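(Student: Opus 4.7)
The plan is to rewrite the minimization~\eqref{def:linearizedPDAP} in a form that decouples the choice of the direction of \(v\) from its total mass, exhibit the pointwise lower bound, and then verify that \(\widehat{v}^k\) attains it. The starting observation is that for any \(v \in \M(\Omega,H)\) one has the duality estimate
\[
\pair{p^k, v} \le \cnorm{p^k}\,\mnorm{v},
\]
which follows from the representation of the duality paring via the Radon-Nikod\'ym derivative together with \(\hnorm{v'(x)} = 1\) almost everywhere. Hence for every feasible \(v\) (\(\mnorm{v} \leq M_0\)),
\[
\pair{-p^k,v} + G(\mnorm{v}) \;\ge\; -\cnorm{p^k}\,\mnorm{v} + G(\mnorm{v}) \;=\; \phi(\mnorm{v}),
\quad\text{with } \phi(m) \coloneqq -\cnorm{p^k}\,m + G(m).
\]
Therefore~\eqref{def:linearizedPDAP} is bounded below by \(\inf_{m\in[0,M_0]} \phi(m)\).

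Next I would analyze the scalar problem \(\min_{m \in [0,M_0]} \phi(m)\). Since \(G\) is convex, proper, lower semi-continuous, and monotonically increasing on \(\R_+\) (Assumption~\ref{ass:PDAP}(i.)), so is \(\phi\), and its convex subdifferential is \(\partial \phi(m) = -\cnorm{p^k} + \partial G(m)\) for \(m > 0\). The optimality conditions on \([0,M_0]\) then read: either \(m^\star \in (0,M_0)\) with \(\cnorm{p^k} \in \partial G(m^\star)\), or the constraint is active and \(m^\star = M_0\) with \(\cnorm{p^k} \ge \sup \partial G(M_0)\) (or the trivial case \(m^\star = 0\) with \(\cnorm{p^k} \le \inf \partial G(0)\), which is subsumed by the first case upon interpreting \((\partial G)^{-1}\) as a set-valued map). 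This is precisely the definition~\eqref{eq:normoflinearized} of \(\widehat{m}^k\); existence of such an \(\widehat{m}^k \in [0,M_0]\) follows from the monotonicity of \(\partial G\) and the continuity of \(G\) on the interior of its domain.

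It remains to show that \(\widehat{v}^k\) actually realizes the lower bound \(\phi(\widehat{m}^k)\). If \(\cnorm{p^k} = 0\) this is immediate: \(\widehat{v}^k = 0\) yields \(\pair{-p^k, \widehat{v}^k} + G(0) = G(0) = \phi(0)\), and any choice \(\widehat{m}^k \ge 0\) is admissible since \(\phi\) is then just \(G\), minimized at \(0\) by monotonicity. If \(\cnorm{p^k} > 0\), then by construction \(\widehat{v}^k = \widehat{m}^k\,(p^k(\widehat{x}^k)/\cnorm{p^k})\,\delta_{\widehat{x}^k}\) has total variation norm \(\widehat{m}^k\) (since \(\hnorm{p^k(\widehat{x}^k)}/\cnorm{p^k} = 1\)) and
\[
\pair{p^k, \widehat{v}^k} = \widehat{m}^k\,\frac{(p^k(\widehat{x}^k),\,p^k(\widehat{x}^k))_H}{\cnorm{p^k}} = \widehat{m}^k\,\cnorm{p^k},
\]
so that \(\pair{-p^k,\widehat{v}^k} + G(\mnorm{\widehat{v}^k}) = \phi(\widehat{m}^k)\), matching the lower bound. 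This proves that \(\widehat{v}^k\) solves~\eqref{def:linearizedPDAP}.

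Finally, for the identity involving \(\Phi(u^k)\), I rewrite the maximization~\eqref{primaldualgap} as
\[
\Phi(u^k) = -\pair{p^k,u^k} + G(\mnorm{u^k}) - \min_{\mnorm{v}\leq M_0}\bigl[\pair{-p^k,v} + G(\mnorm{v})\bigr],
\]
and insert the just-proven fact that the inner minimum equals \(\pair{-p^k,\widehat{v}^k} + G(\mnorm{\widehat{v}^k})\). The only slightly delicate point in the whole argument is ensuring that the scalar optimality conditions on the interval \([0,M_0]\) cover all cases of~\eqref{eq:normoflinearized}, in particular, handling the boundary case \(\cnorm{p^k} > \sup \partial G(M_0)\) and the monotone behavior of \(G\) at \(m = 0\); everything else reduces to the tight duality bound \(\pair{p^k,v}\le\cnorm{p^k}\mnorm{v}\) and its saturation by a Dirac supported at a maximizer of \(\hnorm{p^k(\cdot)}\).
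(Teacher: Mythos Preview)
Your proof is correct and follows essentially the same approach as the paper's: establish the lower bound \(\pair{-p^k,v}+G(\mnorm{v}) \ge \min_{m\in[0,M_0]}\phi(m)\) via the duality estimate, analyze the scalar problem to identify \(\widehat{m}^k\) as its minimizer, and then verify that \(\widehat{v}^k\) saturates the bound by the same two-case distinction on \(\cnorm{p^k}\). The only difference is that you spell out the subdifferential optimality conditions for the scalar problem explicitly, whereas the paper dispatches this as ``by standard arguments.''
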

\begin{proof}
By standard arguments, condition~\eqref{eq:normoflinearized} implies that~$\widehat{m}^k$ is a minimizer to
\begin{align*}
\min_{m \in [0,M_0]} \lbrack -m \cnorm{p^k}+G(m) \rbrack.
\end{align*}
Next we observe that
\begin{align*}
\langle -p^k,v \rangle + G(\mnorm{{v}})
 \geq -\cnorm{p^k} \mnorm{{v}}+G(\mnorm{{v}})
 \geq \min_{m \in [0,M_0]} \lbrack -m \cnorm{p^k}+G(m) \rbrack
\end{align*}
for all~$v \in \Moc$,~$\mnorm{v}\leq M_0$.
We distinguish two cases. First,~if~$\cnorm{p^k}=0$ then~$\widehat{v}^k=0$ satisfies
\begin{align*}
\langle -p^k,\widehat{v}^k \rangle + G(\mnorm{\widehat{v}^k})=G(0)=\min_{m \in [0,M_0]} G(m)=\min_{m \in [0,M_0]} \lbrack -m \cnorm{p^k}+G(m) \rbrack.
\end{align*}
and therefore $\widehat{v}^k=0$ is a solution to~\eqref{def:linearizedPDAP}.
Note that the second equality holds due to the monotonicity of~$G$ from Assumption~\ref{ass:PDAP}.
Else, if~$\cnorm{p^k}>0$, the measure~$\widehat{v}^k$ defined in~\eqref{eq:sollinearized} satisfies
\begin{align*}
\langle -{p}^k,\widehat{v}^k \rangle + G(\mnorm{\widehat{v}^k})
 = -\widehat{m}^k \frac{(p(\widehat{x}^k),p(\widehat{x}^k))_H}{\cnorm{p^k}} + G(\widehat{m}^k)
 = -\widehat{m}^k \cnorm{p^k}+G(\widehat{m}^k)= \min_{m \in [0,M_0]} \lbrack -m \cnorm{p^k} + G(m) \rbrack
\end{align*}
where we use~$\hnorm{p^k(\widehat{x}^k)}=\cnorm{p^k}$ in the second equality. Hence we again conclude the optimality of~$\widehat{v}^k$ for~\eqref{def:linearizedPDAP} which finishes the proof.
\end{proof}
We note that the addition of the constraint \(\mnorm{v}\leq M_0\) ensures that the
minimum in~\eqref{def:linearizedPDAP} is finite, which may otherwise not hold for all cost
functions \(G\) (e.g.,~\eqref{eq:Psource} with \(G(m) = \beta m + I_{m\geq 0}\)).

Last, we define the GCG update of~$u^k$ with stepsize~$s^k\in[0,1]$ by~$u^{k+1/2} =
u^k+s^k(\widehat{v}^k-u^k)$. This stepsize can be chosen in various ways; since we rely on the
analysis of~\cite{walter2018sensor,Walter:2019}, we employ the same
Armijo-Goldstein rule discussed there. The resulting GCG algorithm is summarized in Algorithm~\ref{alg:GCGmeasgeneral}.
\begin{algorithm}
\begin{algorithmic}
 \Require Initial $u^0 \in \M(\Omega,H)$.
 \For {$k = 0,1,2,\ldots$}
 \State 1. Compute $p^k = -\nabla f(u^k) = -\Kstar \nabla F(K u^k)$. Determine 
 \begin{align*}
   \widehat{x}^k \in \Omega \quad\text{with}\quad \hnorm{p^k(\widehat{x})} = \cnorm{p^k}
   = \max_{x\in\Omega} P^k(x) 
 \end{align*}
 \vspace{-1em}
 \State 2.
 Set $\widehat{v}^k$ as in~\eqref{eq:normoflinearized}
 and~\eqref{eq:sollinearized}.
 \State 3.
  Set \(u^{k+1/2} = u^k+s^k(\widehat{v}^k-u^k)\), where \(s^k \in [0,1]\) is chosen with the Armijo-Goldstein rule.
 \State 4. 
   Choose~$u^{k+1} \in \M(\Omega, H)$, with
   \(j(u^{k+1}) \leq j(u^{k+1/2})\) and \(\mnorm{u^{k+1}} \leq M_0\).
 \EndFor \Comment {\emph{Comment:} Can terminate with $\Phi(u^k) \leq \operatorname{TOL}$}
\end{algorithmic}
\caption{Generalized conditional gradient method (GCG)}\label{alg:GCGmeasgeneral}
\end{algorithm}

The worst-case convergence analysis of Algorithm~\ref{alg:PDAPgeneral}, relies on the inclusion of the optional step~4.\ in
Algorithm~\ref{alg:GCGmeasgeneral}. It allows to replace the GCG update \(u^{k+1/2}\) with
any \(u^{k+1}\) that decreases the functional value. Clearly the update
\(u^{k+1}\) defined in step~5.\ of Algorithm~\ref{alg:PDAPgeneral} fulfills this
condition, due to the fact that \(\supp u^{k+1/2} \subset \mathcal{A}_{k+1/2}\).
Hence, Algorithm~\ref{alg:PDAPgeneral} achieves at
least as much descent in the objective functional as the GCG update \(u^{k+1/2}\) in each step and thus the
convergence analysis of Algorithm~\ref{alg:GCGmeasgeneral} applies to
Algorithm~\ref{alg:PDAPgeneral}, which can be considered an accelerated version of the former.
Using this observation, we conclude the global unconditional convergence of
  Algorithm~\ref{alg:PDAPgeneral} and a sublinear rate of convergence for the
residuals. For similar
results in specific settings; see also~\cite{efekthari,boyd2015alternating, bredies2013inverse,walter2018sensor}.
We refer to~\cite[Theorem~6.29]{Walter:2019} for a detailed derivation of the specific
form of the following result.
\begin{theorem} \label{thm:convergenceGCGmeas}
Suppose that Assumption~\ref{ass:PDAP} and
condition~\eqref{eq:lipgrad} hold
and let~$u^k$ be generated by Algorithm~\ref{alg:GCGmeasgeneral} or Algorithm~\ref{alg:PDAPgeneral}.
Then~$u^k$ is a minimizing sequence for~$j$ and 
there exists a \(q \in (0,1]\) with
\[
j(u^k) - j(\bar{u}) = r_j(u^k) \leq \frac{r_j(u^0)}{1+q k}.
\] 
Moreover~$u^k$ admits at least one weak* convergent subsequence and each weak*
accumulation point~$\bar{u}$ of $u^k$ is a minimizer of~$j$ over~$\M(\Omega,H)$. If the
solution~$\bar{u}$ to~\eqref{def:pdaproblem} is unique then we
have~$u^k \rightharpoonup^*\bar{u}$ for the whole sequence as well as~$F(Ku^k)\rightarrow
F(K\bar{u}),~G(\mnorm{u^k})\rightarrow G(\mnorm{\bar{u}})$.
\end{theorem}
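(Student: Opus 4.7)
The plan is to use the standard conditional gradient descent lemma adapted to the measure space setting, combined with the primal-dual gap bound from Proposition~\ref{prop:primal_dual_bound} and the explicit trial point from Proposition~\ref{prop:solutionoflinearizedformeas}. Then I would obtain subsequential convergence via weak* compactness, and upgrade to full-sequence convergence under uniqueness by a standard subsequence-of-subsequence argument.

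First, I would establish a one-step descent estimate for the GCG update $u^{k+1/2} = u^k + s(\widehat{v}^k - u^k)$. Since both $u^k, \widehat{v}^k$ lie in $\{\,v \mid \mnorm{v} \leq M_0\,\}$, any convex combination also satisfies this bound, so $Ku^{k+1/2} \in K E_j(u^0)$ and the Lipschitz bound~\eqref{eq:lipgrad} applies with a single constant $L_0 \coloneqq L_{u^0}$. A Taylor expansion with integral remainder together with~\eqref{eq:lipgrad} yields
\[
f(u^{k+1/2}) \leq f(u^k) + s\,\pair{\nabla f(u^k),\, \widehat{v}^k - u^k} + \tfrac{s^2 L_0}{2}\,\norm{K(\widehat{v}^k - u^k)}_Y^2,
\]
and the convexity of $G$ together with the triangle inequality gives $G(\mnorm{u^{k+1/2}}) \leq (1-s)G(\mnorm{u^k}) + s\, G(\mnorm{\widehat{v}^k})$. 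Combining these and invoking the definition of $\Phi$ through Proposition~\ref{prop:solutionoflinearizedformeas}, I obtain
\[
j(u^{k+1/2}) - j(u^k) \leq -s\,\Phi(u^k) + \tfrac{s^2 L_0}{2}\, C_0,
\]
where $C_0$ bounds $\norm{K(\widehat{v}^k-u^k)}_Y^2$ uniformly in $k$ via $\mnorm{\widehat{v}^k - u^k} \leq 2M_0$ and the boundedness of $K$.

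Second, I would use the Armijo-Goldstein rule (whose quantitative descent guarantee is documented in the references the authors cite) to conclude the existence of a constant $q \in (0,1]$ such that
\[
j(u^{k+1}) - j(u^k) \;\leq\; j(u^{k+1/2}) - j(u^k) \;\leq\; -q\,\min\left\{\Phi(u^k),\;\Phi(u^k)^2/(L_0 C_0)\right\},
\]
where the first inequality uses the acceptance condition in step~4.\ of Algorithm~\ref{alg:GCGmeasgeneral} and the fact that $u^{k+1}$ from Algorithm~\ref{alg:PDAPgeneral} trivially satisfies it (since $\supp u^{k+1/2} \subset \mathcal{A}_{k+1/2}$). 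Combining with the primal-dual estimate $r_j(u^k) \leq \Phi(u^k)$ from Proposition~\ref{prop:primal_dual_bound} and setting $a_k = r_j(u^k)$, I get a recursion of the form $a_{k+1} \leq a_k - q\min\{a_k, a_k^2/C\}$. A standard induction (e.g., distinguishing whether $a_k$ is large or small relative to $C$) then yields $a_k \leq r_j(u^0)/(1+qk)$, possibly after adjusting the constant $q$. This will be the most delicate step, since the Armijo-Goldstein bound must be extracted in a form that matches both branches of the min.

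Finally, for the topological conclusions: since $E_j(u^0)$ is weak* compact and $u^k \in E_j(u^0)$, the Banach-Alaoglu-Bourbaki theorem gives a weak* convergent subsequence $u^{k_j} \rightharpoonup^* u^*$. By weak* lower semicontinuity of $j$ (noted after Assumption~\ref{ass:PDAP}) and the minimizing property just proved, $j(u^*) \leq \liminf_j j(u^{k_j}) = j(\bar{u})$, so $u^*$ is a minimizer. If the minimizer is unique, every weak* convergent subsequence has the same limit $\bar{u}$, so the whole sequence satisfies $u^k \rightharpoonup^* \bar{u}$. Then $Ku^k \to K\bar{u}$ strongly in $Y$ by the weak*-to-strong continuity of $K$, so $F(Ku^k) \to F(K\bar{u})$ by continuity of $F$ on its open domain (Assumption~\ref{ass:PDAP}(iii.)), and $G(\mnorm{u^k}) = j(u^k) - F(Ku^k) \to j(\bar{u}) - F(K\bar{u}) = G(\mnorm{\bar{u}})$ by subtraction.
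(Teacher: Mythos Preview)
The paper does not give its own proof of this theorem; it states the result and refers to \cite[Theorem~6.29]{Walter:2019} and the surrounding references for the derivation. Your sketch follows the standard generalized conditional gradient analysis that those references contain, so in spirit it matches what the paper is relying on, and your treatment of the topological conclusions (weak* compactness of $E_j(u^0)$, weak* lower semicontinuity of $j$, subsequence-of-subsequence argument, and the recovery of $F(Ku^k)\to F(K\bar u)$ and $G(\mnorm{u^k})\to G(\mnorm{\bar u})$ by subtraction) is correct.

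There is, however, one genuine logical slip in your first step. The implication ``$\mnorm{u^{k+1/2}} \leq M_0$, hence $Ku^{k+1/2} \in KE_j(u^0)$'' is invalid: $KE_j(u^0)$ is the image of the \emph{sublevel set} $\{\,v : j(v)\le j(u^0)\,\}$, not of the norm ball $\{\,\mnorm{v}\le M_0\,\}$. For an arbitrary $s\in(0,1]$ there is no a priori reason why $Ku^{k+1/2}$ lies in $KE_j(u^0)$, or even in $\dom F$, so the Taylor expansion with constant $L_{u^0}$ cannot be invoked along the full segment as you write it. This is exactly what the Armijo--Goldstein backtracking is for: one tests decreasing step sizes until $u^{k+1/2}$ lands in $E_j(u^k)\subset E_j(u^0)$, which always happens for $s$ small enough because $\dom F$ is open and $Ku^k\in\dom F$; only then is the quantitative descent inequality with $L_{u^0}$ available. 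Since you explicitly defer the Armijo--Goldstein analysis to the cited references this is not fatal to your outline, but the sentence you wrote to justify the Lipschitz bound is wrong and should be replaced by this line-search argument.
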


Let us comment briefly on the main difference of the algorithms, which lies
in the update of the coefficients in steps~3.--4., respectively.
The GCG method in Algorithm~\ref{alg:GCGmeasgeneral} attempts
to move as much ``mass'' as possible, simultaneously from all coefficients of \(u^k\) to the trial point
\(\widehat{v}^k\). The problem in obtaining an improved linear convergence result for this method
lies in the choice of the point \(\widehat{v}^k\):
for \(N > 1\) it does not converge to the true solution, \(\widehat{v}^k
\not\rightharpoonup^* \bar{u}\), since \(\widehat{v}^k\) is supported only on
a single point.
However, it holds that \(j(u^{k+1/2}) \to j(\bar{u})\) and thus
\[
u^{k+1/2} = (1-s^k)u^k + s^k \widehat{v}^k \rightharpoonup^* \bar{u},
\]
and therefore we must have \(s^k \to 0\) for \(k\to\infty\).
This prevents improving the convergence rate of
Theorem~\ref{thm:convergenceGCGmeas} without any acceleration in step~4.
In contrast, the proof of the linear rate for Algorithm~\ref{alg:PDAPgeneral} relies on an improved
intermediate iterate defined with the trial point \(\uklump\), which we show converges to
\(\bar{u}\). Here, we are able to choose \(s^k > s_{\min} > 0\) which yields the linear convergence; see the
proof of Theorem~\ref{lem:linearrate}.


\subsection{Improved rates for the residual} \label{subsec:improres}
In the following, we turn our attention back to the improved convergence analysis of
Algorithm~\ref{alg:PDAPgeneral} from Theorem~\ref{thm:fastconvergencepdapdisclaimer},
where we first prove the improved rate for the residual.
For this purpose we now and for the rest of this section suppose that Assumption~\ref{ass:PDAP}, \ref{ass:strongsource1},
\ref{ass:strongsource2}, and~\ref{ass:regularF} hold. We again recall the definition of
the optimal state~$\bar{y}$, the dual variable~$\bar{p}$, the dual certificate~$\bar{P}$
and its maximal value~$\bar{\lambda}$:
\begin{align} \label{def:optquant}
\bar{y}\coloneqq K\bar{u},\;
\bar{p}\coloneqq-\nabla f(\bar{u}),\;
\bar{P}\coloneqq\hnorm{\bar{p}},\;
\bar{\lambda}\coloneqq \max_{x\in \supp \bar{u} } \bar{P}(x) = \max_{x \in \Omega} \bar{P}(x),
\end{align}
as introduced in section~\ref{sec:sparsemin}.
Analogously we define the corresponding iterates of Algorithm~\ref{alg:PDAPgeneral}:
\begin{align} \label{def:itquants}
y^k\coloneqq Ku^k,\;
p^k\coloneqq - \nabla f(u^k),\;
P^k\coloneqq\hnorm{p^k},\;
\lambda^k\coloneqq \max_{x\in \mathcal{A}_k } P^k(x),\;
\mathcal{A}_k = \supp u^k,
\end{align}
as introduced in section~\ref{sec:algo}. We note that we have given the form of the
  multiplier \(\lambda^k\) from~\eqref{def:lambdak_2}, which requires \(u^k\neq 0\).
By the global convergence result of the previous section, this is indeed the case:
\begin{corollary} \label{corr.characofiterates}
For all~$k$ large enough there holds~$u^k \neq 0$ and~$\lambda^k>0$.
\end{corollary}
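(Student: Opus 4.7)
My plan is to leverage the global convergence result of Theorem~\ref{thm:convergenceGCGmeas} and propagate it through the kernel to the dual variable, so that both claims reduce to continuity arguments.

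First, I would observe that the optimal measure is nontrivial. Assumption~\ref{ass:strongsource1} asserts \(\bar{\lambda} = \norm{\bar{P}}_{\Cc(\Omega)} > 0\), so the continuous function \(\bar{P}\) attains its supremum and the index set in~\eqref{exp:finitelymanpoints} is nonempty, forcing \(N\geq 1\). Combined with \(\hnorm{\bar{\bd{u}}_n}>0\) from Assumption~\ref{ass:strongsource2} this yields \(\bar{u}\neq 0\) and hence \(\mnorm{\bar{u}}>0\). Uniqueness of the minimizer from Proposition~\ref{prop:uniquenesspdap} together with Theorem~\ref{thm:convergenceGCGmeas} (whose hypotheses hold by Assumption~\ref{ass:PDAP} and~\eqref{eq:lipgrad}) then gives \(u^k \rightharpoonup^* \bar{u}\) for the whole sequence. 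The weak*-to-strong continuity of \(K\) yields \(K u^k \to \bar{y}\) in \(Y\); continuity of \(\nabla F\) on \(\dom F\) together with boundedness of \(\Kstar\) upgrades this to \(p^k = -\Kstar\nabla F(Ku^k) \to \bar{p}\) in \(\Cc(\Omega,H)\), and in particular \(\cnorm{p^k}\to\bar{\lambda}>0\).

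The nontriviality \(u^k\neq 0\) for large \(k\) then follows by testing: pick \(\varphi\in\Cc(\Omega,H)\) with \(\pair{\varphi,\bar{u}}\neq 0\) (available since \(\bar{u}\neq 0\)) and use \(\pair{\varphi,u^k}\to\pair{\varphi,\bar{u}}\) to conclude. For \(\lambda^k>0\) I would invoke Proposition~\ref{prop:optimalityforsubproblems} to write the identity
\[
\lambda^k \, \mnorm{u^k} = \pair{p^k, u^k}.
\]
The strong convergence \(p^k \to \bar{p}\) in \(\Cc(\Omega,H)\) paired with \(u^k\rightharpoonup^*\bar{u}\) in \(\M(\Omega,H)\) (with uniformly bounded mass) gives
\[
\pair{p^k, u^k} \to \pair{\bar{p}, \bar{u}} = \bar{\lambda}\,\mnorm{\bar{u}} > 0,
\]
where the equality is the optimality identity from Theorem~\ref{thm:optimalityconditionequPDAP}. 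Combined with the a priori bound \(\mnorm{u^k}\leq M_0\) (a consequence of the descent property~\eqref{eq:descent} and the choice of \(M_0\) made in Section~\ref{subsec:worstcase}) this forces \(\liminf_k \lambda^k \geq \bar{\lambda}\,\mnorm{\bar{u}}/M_0 > 0\).

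The only mildly delicate point is the passage to the limit in the product identity for \(\lambda^k\): weak* convergence of \(u^k\) does not supply norm convergence \(\mnorm{u^k}\to\mnorm{\bar{u}}\), only weak* lower semicontinuity \(\mnorm{\bar{u}}\leq \liminf_k \mnorm{u^k}\). However, since I need merely a strictly positive lower bound on \(\lambda^k\) and the upper bound \(\mnorm{u^k}\leq M_0\) is already in hand, the quotient argument above is enough to conclude without establishing any finer behaviour of the total variation along the sequence.
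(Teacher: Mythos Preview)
Your proposal is correct and follows essentially the same route as the paper: weak* convergence $u^k\rightharpoonup^*\bar u$ plus strong convergence $p^k\to\bar p$ to pass to the limit in the identity $\lambda^k\mnorm{u^k}=\pair{p^k,u^k}$. The only cosmetic difference is that you bound $\lambda^k$ from below via the mass cap $\mnorm{u^k}\le M_0$, whereas the paper simply notes that $\lambda^k\ge 0$ and $\lambda^k\mnorm{u^k}\to\bar\lambda\,\mnorm{\bar u}>0$ already force $\lambda^k>0$ (and records $\mnorm{u^k}\ge\mnorm{\bar u}/2$ from weak* lower semicontinuity for later use).
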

\begin{proof}
According to Theorem~\ref{thm:convergenceGCGmeas} we have~$u^k \rightharpoonup^* \bar{u}$
in~$\Moc$ and~$p^k\rightarrow \bar{p}$ in~$\Cc(\Omega,H)$. In particular,
since~$\bar{u}\neq 0$, this implies~$u^k \neq 0$ for all~$k$ large enough.
Thus it remains to address the positivity of~$\lambda^k$. From the weak* convergence
of~$u^k$, the strong convergence of~$p^k$
and the weak* lower semicontinuity of the norm we readily obtain
\begin{align*}
\lambda^k \mnorm{u^k}=\langle p^k , u^k \rangle \rightarrow \langle \bar{p} , \bar{u} \rangle= \bar{\lambda} \mnorm{\bar{u}}, \quad \mnorm{u^k} \geq \mnorm{\bar{u}} /2,
\end{align*}
and thus~$\lambda^k>0$ for all~$k$ large enough.
\end{proof}

We first explore some immediate consequences of Assumption~\ref{ass:regularF} that allow us to estimate
the error of the important algorithmic quantities in terms of the functional
residual. This guarantees their convergence at the already established rate and will be
used for the proof of the improved rate below.
\begin{lemma}  \label{lem:estforsatesandadjoint}
For all~$k$ large enough there holds
\begin{align*}
\ynorm{y^k- \bar{y}}+\ynorm{\nabla F(y^k)-\nabla F(\bar{y})}+\cnorm{p^k - \bar{p}}+ \|P^k-\bar{P}\|_{\Cc(\Omega)} \leq c\, {r_j(u^k)}^{1/2}.
\end{align*}
\end{lemma}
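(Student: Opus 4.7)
The plan is to first bound $\|y^k - \bar y\|_Y$ by $c\,r_j(u^k)^{1/2}$ using strong convexity of $F$, and then to propagate this estimate through $\nabla F$, $K^\star$, and the pointwise norm $\hnorm{\cdot}$ to handle the other three terms in turn.

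For the first estimate, since Theorem~\ref{thm:convergenceGCGmeas} gives $y^k = Ku^k \to \bar y$ strongly in $Y$, for all $k$ sufficiently large we can assume $y^k$ lies in the convex neighborhood $\mathcal{N}(\bar y)$ of Assumption~\ref{ass:regularF}. Integrating the strong monotonicity inequality~\eqref{eq:uniformconvexity} along the segment from $\bar y$ to $y^k$ yields the usual quadratic lower bound
\begin{equation*}
F(y^k) - F(\bar y) - \inner{\nabla F(\bar y), y^k - \bar y}_Y \geq \frac{\gamma_0}{2}\ynorm{y^k - \bar y}^2.
\end{equation*}
Using $\nabla f(\bar u) = \Kstar \nabla F(\bar y) = -\bar p$, the left-hand side equals $f(u^k) - f(\bar u) + \pair{\bar p, u^k - \bar u}$. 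The optimality conditions from Theorem~\ref{thm:optimalityconditionequPDAP}, namely $\pair{\bar p,\bar u} = \bar\lambda\mnorm{\bar u}$, $\bar\lambda = \cnorm{\bar p} \in \partial G(\mnorm{\bar u})$, together with $\pair{\bar p, u^k} \leq \cnorm{\bar p}\mnorm{u^k}$ and the subgradient inequality $G(\mnorm{u^k}) - G(\mnorm{\bar u}) \geq \bar\lambda(\mnorm{u^k}-\mnorm{\bar u})$, give $\pair{\bar p, u^k - \bar u} \leq G(\mnorm{u^k}) - G(\mnorm{\bar u})$. Substituting, the left-hand side is bounded above by $j(u^k) - j(\bar u) = r_j(u^k)$, which yields $\ynorm{y^k - \bar y} \leq c\,r_j(u^k)^{1/2}$.

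The remaining bounds follow by straightforward propagation. Since $u^k \in E_j(u^0)$ for all $k$ by the descent property~\eqref{eq:descent}, we have $y^k, \bar y \in KE_j(u^0)$, and the Lipschitz estimate~\eqref{eq:lipgrad} (with $L = L_{u^0}$) gives $\ynorm{\nabla F(y^k) - \nabla F(\bar y)} \leq L\,\ynorm{y^k - \bar y}$. Writing $p^k - \bar p = \Kstar(\nabla F(\bar y) - \nabla F(y^k))$ and using that $\Kstar$ is bounded from $Y$ into $\Cc(\Omega,H)$ yields $\cnorm{p^k - \bar p} \leq \norm{\Kstar}\,\ynorm{\nabla F(y^k) - \nabla F(\bar y)}$. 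Finally, the pointwise reverse triangle inequality $|\hnorm{p^k(x)} - \hnorm{\bar p(x)}| \leq \hnorm{p^k(x) - \bar p(x)}$ gives $\|P^k - \bar P\|_{\Cc(\Omega)} \leq \cnorm{p^k - \bar p}$. Chaining these four inequalities with the constant adjusted from line to line produces the claimed bound.

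I do not expect any real obstacle here: the only subtlety is ensuring the strong convexity inequality is applicable, which is handled by the convergence $y^k \to \bar y$ supplied by the worst-case analysis of Section~\ref{subsec:worstcase} and by working on a convex neighborhood so that integration along segments is legitimate.
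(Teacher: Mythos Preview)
Your proof is correct and follows essentially the same route as the paper. The only cosmetic difference is in how the optimality of $\bar u$ is used for the first estimate: you unpack the conditions $\pair{\bar p,\bar u}=\bar\lambda\mnorm{\bar u}$ and $\bar\lambda\in\partial G(\mnorm{\bar u})$ directly to get $\pair{\bar p,u^k-\bar u}\leq G(\mnorm{u^k})-G(\mnorm{\bar u})$, whereas the paper packages the same argument as $-\Phi(\bar u)=0$ with $u^k$ as a feasible competitor in the max defining $\Phi$; the three remaining propagation steps are identical.
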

\begin{proof}
Recall the neighborhood~$\mathcal{N}(\bar{y})$ from Assumption~\ref{ass:regularF}.
Due to the
weak* convergence of~$u^k$ towards~$\bar{u}$, see Theorem~\ref{thm:convergenceGCGmeas},
and the weak*-to-strong continuity of~$K$ (see the discussion at the end of
Section~\ref{sec:Notation})
there holds~$y^k \in \mathcal{N}(\bar{y})$ for all~$k$ large enough.
Thus, invoking the strong convexity~\eqref{eq:uniformconvexity} from
Assumption~\ref{ass:regularF} and recalling the definition of~$\Phi$
from~\eqref{primaldualgap}, we conclude
\begin{align*}
  j(u^k) &= F(Ku^k) + G(\mnorm{u^k})
  \\&\geq  F(K\bar{u}) + \gamma_0 \ynorm{y^k-\bar{y}}^2 
  + \inner{\nabla F(K\bar{u}), K (u^k - \bar{u})}_Y + G(\mnorm{u^k}) \\
  &= j(\bar{u}) + \gamma_{0} \ynorm{y^k-\bar{y}}^2
  + \pair{ \bar{p},\bar{u}-u^k} - G(\mnorm{\bar{u}}) + G(\mnorm{u^k}) \\
  &\geq
       j(\bar{u}) + \gamma_{0}\ynorm{y^k-\bar{y}}^2 - \Phi(\bar{u}),
\end{align*}
where we used~$\mnorm{u^k}\leq M_0$ in the last inequality.
By optimality of $\bar{u}$ there holds $\Phi(\bar{u})=0$ and thus
\begin{align*}
\gamma_0 \ynorm{y^k -\bar{y}}^2 \leq j(u^k)-j(\bar{u}) = r_j(u^k).
\end{align*}
Dividing both sides by~$\gamma_0$ and taking the square root yields the estimate on~$y^k-\bar{y}$.

Next recall the definition of the compact set~$KE_j(u^0)$
from~\eqref{def:imsublevset}. Note that Algorithm~\ref{alg:PDAPgeneral} is a descent
method and thus $y^k=Ku^k \in KE_j(u^0)$. Moreover, since the gradient~$\nabla F$ is
Lipschitz continuous on~$KE_j(u^0)$ due to Assumption~\ref{ass:regularF}, we have
\begin{align*}
\ynorm{\nabla F(y^k)-\nabla F(\bar{y})}
 \leq L_{u^0} \ynorm{y^k-\bar{y}}
 \leq c \, r_j(u^k)^{1/2}
\end{align*}
for some~$L_{u^0}>0$ only depending on~$j(u^0)$.
The estimate for the dual variables~$p^k$ follows immediately  since
\begin{align*}
\cnorm{p^k-\bar{p}}
&= \cnorm{\Kstar(\nabla F(Ky^k) - \nabla F(K\bar{y}))}
\leq \norm{\Kstar}_{\mathcal{L}(Y,\Cc(\Omega,H))} \ynorm{\nabla F(y^k)-\nabla F(\bar{y})}
\leq c \, r_j(u^k)^{1/2}.
\end{align*}
Finally we note that
\begin{align*}
\norm{P^k-\bar{P}}_{\Cc(\Omega)}
= \max_{x \in \Omega} \abs*{\hnorm{p^k(x)}-\hnorm{\bar{p}(x)}}
\leq \max_{x \in \Omega} \hnorm{p^k(x)-\bar{p}(x)}
= \cnorm{p^k-\bar{p}}
\leq c \, r_j(u^k)^{1/2}.
\qquad\qedhere
\end{align*}
\end{proof}

The next lemma establishes some immediate properties of the dual certificate that will be
useful for estimating the distance of the inserted point \(\widehat{x}^k\) and the support
points of \(u^k\) to the optimal support points of \(\bar{u}\).
\begin{lemma}\label{lem:QuadraticGrowth}
There exist $0<R'\leq R$ and~$\sigma>0$ such that with
\(\Omega_{R'} = \bigcup_{n=1}^{N} B_{R'}(\bar{x}_n)\) there holds
\begin{align} 
\label{upperboundoutside}
\bar{P}(x)\leq \bar{\lambda}-\sigma 
&\quad \text{for all } x\in \Omega \setminus \Omega_{R'}, \\
\label{hessianinside}
 -\inner*{\xi,\,\nabla^2 \bar{P}(x)\,\xi}_{\R^d} \geq \frac{\theta_0}{2} |\xi|^2_{\R^d}
&\quad \text{for all } x\in \Omega_{R'},~ \xi \in \R^d,
\end{align}
where~$\theta_0>0$ denotes the constant from Assumption~\ref{ass:strongsource2}.
Moreover, for all~$n = 1,\ldots,N$, the following quadratic
growth condition is satisfied:
\begin{align} \label{contquadgrowth}
\bar{P}(x)+\frac{\theta_0}{4}|x-\bar{x}_n|^2\leq \bar{P}(\bar{x}_n) \quad \text{for all } x\in B_{R'}(\bar{x}_n).
\end{align}
\end{lemma}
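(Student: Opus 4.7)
The plan is to choose $R'$ small enough to guarantee the Hessian estimate inside the balls, then derive the other two statements as consequences using compactness and Taylor expansion.

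First, I would fix the radius $R'$. Since the kernel satisfies~\eqref{eq:Ctwo_kernel} on $\bar{\Omega}_R$, we know $\bar{P} \in \Cc^2(\bar{\Omega}_R)$, so the map $x \mapsto \nabla^2 \bar{P}(x)$ is continuous on the compact set $\bar{\Omega}_R$. Assumption~\ref{ass:strongsource2} gives $-\nabla^2\bar{P}(\bar{x}_n) \succeq \theta_0 I$ for every $n$. By continuity, there exists $R_n \in (0, R]$ such that $-\langle \xi, \nabla^2 \bar{P}(x)\xi\rangle \geq (\theta_0/2)|\xi|^2$ for all $x\in \bar{B}_{R_n}(\bar{x}_n)$ and all $\xi\in\R^d$. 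Taking $R'$ to be the minimum over $n$ of these radii (and $R$) yields~\eqref{hessianinside}. Note that by~\eqref{separation} the balls $B_{R'}(\bar{x}_n)$ remain pairwise disjoint and contained in $\operatorname{int}\Omega$.

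Next, to obtain~\eqref{upperboundoutside}, I would invoke compactness. The set $\Omega \setminus \Omega_{R'}$ is compact as a closed subset of the compact set $\Omega$, and $\bar{P}\in\Cc(\Omega)$. By Assumption~\ref{ass:strongsource1}, the global maximizers $\{\bar{x}_n\}$ of $\bar{P}$ lie in the interior of $\Omega_{R'}$, so $\bar{P}(x) < \bar{\lambda}$ holds pointwise on $\Omega\setminus\Omega_{R'}$. Continuity and compactness then give $\max_{x\in \Omega\setminus\Omega_{R'}} \bar{P}(x) = \bar{\lambda} - \sigma$ for some $\sigma > 0$, which is~\eqref{upperboundoutside}.

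Finally, for the quadratic growth~\eqref{contquadgrowth}, I would use a second order Taylor expansion of $\bar{P}$ around $\bar{x}_n$. As noted in the text preceding Assumption~\ref{ass:strongsource2}, $\bar{x}_n$ is an interior maximizer of $\bar{P}$, so $\nabla\bar{P}(\bar{x}_n)=0$. For any $x \in B_{R'}(\bar{x}_n)$, convexity of the ball and $\bar{P}\in \Cc^2(\bar{\Omega}_R)$ yield the existence of a point $\xi$ on the segment $[\bar{x}_n, x]\subset B_{R'}(\bar{x}_n)$ with
\[
\bar{P}(x) = \bar{P}(\bar{x}_n) + \tfrac{1}{2}\inner*{x-\bar{x}_n,\,\nabla^2 \bar{P}(\xi)(x-\bar{x}_n)}_{\R^d}.
\]
Applying~\eqref{hessianinside} at $\xi$ gives $\bar{P}(x) \leq \bar{P}(\bar{x}_n) - (\theta_0/4)|x-\bar{x}_n|^2$, which is exactly~\eqref{contquadgrowth}.

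There is no real obstacle here beyond bookkeeping; the only subtlety is to choose $R'$ once, small enough that both the Hessian bound and the inclusion $\Omega_{R'}\subset \Omega_R$ hold simultaneously, so that the $\Cc^2$-regularity of $\bar{P}$ supplied by~\eqref{eq:Ctwo_kernel} is available on the whole of $\bar{\Omega}_{R'}$ for the Taylor argument.
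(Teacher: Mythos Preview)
Your proof is correct and follows essentially the same approach as the paper: continuity of $\nabla^2\bar P$ near the $\bar x_n$ to obtain~\eqref{hessianinside}, compactness of $\Omega\setminus\Omega_{R'}$ together with the strict inequality $\bar P<\bar\lambda$ away from the maximizers to obtain~\eqref{upperboundoutside}, and a second-order Taylor expansion with $\nabla\bar P(\bar x_n)=0$ to obtain~\eqref{contquadgrowth}. The only cosmetic difference is the order in which $R'$ is shrunk (you fix the Hessian bound first and then read off $\sigma$; the paper states the existence of $R',\sigma$ for~\eqref{upperboundoutside} first and then shrinks $R'$ further for the Hessian), which is immaterial.
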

\begin{proof}
According to Corollary~\ref{corr:finitesupportedpdap} we
have~$\bar{P}(\bar{x}_n)=\|\bar{P}\|_{\mathcal{C}(\Omega)}$,
and with Assumption~\ref{ass:strongsource1} it holds
$\bar{P}(x) < \|\bar{P}\|_{\mathcal{C}(\Omega)}$ for all~$x\in \Omega \setminus
\{\,\bar{x}_n\,\}^N_{n=1}$. Thus the existence of~$R' \leq R$ and~$\sigma>0$ such
that~\eqref{upperboundoutside} holds follows from the continuity of~$\bar{P}$. For a given
index $n$, without restriction~$R'$ can be chosen small enough such that 
\begin{align*}
\|\nabla^2 \bar{P}(x)-\nabla^2 \bar{P}(\bar{x}_n)\|_{\R^{d\times d}} \leq \frac{\theta_0}{2} \quad
  \text{for all } x \in {B}_{R'}(\bar{x}_n) 
\end{align*}
and thus
\begin{align*}
 -\inner*{\xi,\,\nabla^2 \bar{P}(x)\,\xi}_{\R^d}&= -\inner*{\xi,\,\big(\nabla^2 \bar{P}(\bar{x}_n)+\nabla^2 \bar{P}(x)-\nabla^2 \bar{P}(\bar{x}_n) \big)\,\xi}_{\R^d} 
 \\ & \geq \left({\theta_0}- \|\nabla^2 \bar{P}(\bar{x}_n)-\nabla^2 \bar{P}(x)\|_{\R^{d\times d}} \right) \, |\xi|^2_{\R^d} \geq \frac{\theta_0}{2}\, |\xi|^2_{\R^d} \notag
\end{align*}
for all~$ x \in {B}_{R'}(\bar{x}_n)$, which proves~\eqref{hessianinside}.
Finally fix~$x \in {B}_{R'}(\bar{x}_n)$.
Note that~$\bar{x}_n \in \operatorname{int} \Omega$ (with
Assumption~\ref{ass:strongsource1}) is a global maximum of~$\bar{P}$ and
therefore~$\nabla \bar{P}(\bar{x}_n)=0$.
By Taylor's theorem with remainder there exists~$\widetilde{x}\in \bar{B}_{R'}(\bar{x}_n)$ with
\[
\bar{P}(\bar{x})= \bar{P}(x)- \frac{1}{2}\inner*{x-\bar{x}_n,\,\nabla^2 \bar{P}(\widetilde{x})\,(x-\bar{x}_n)}_{\R^d}
 \geq \bar{P}(x)+ \frac{\theta_0}{4}\, |x-\bar{x}_n|^2_{\R^d}
\]
where~\eqref{hessianinside} is used in the last inequality. Since~$n$ and~$x$ were chosen arbitrarily, this finishes the proof.
\end{proof}

\subsubsection{Intermediate estimates for the support points}
\label{sec:perturbation_arguments}

First we argue that the support of~$u^k$ and the new candidate point~$\widehat{x}^k$
from step 1.\ in Algorithm~\ref{alg:PDAPgeneral} are located in the vicinity of the
optimal support points~$\bar{x}_n$ if~$k$ is large enough. For this purpose
we require the following estimate on the gap~$\Phi(u^k)$ of the iterates, which bounds the
functional residual.
\begin{lemma}
\label{lem:PsiEstimatePDAP}
Assume that the sequence~$u^k$ is generated by Algorithm~\ref{alg:PDAPgeneral}
and recall the definitions of \(\Phi\) from~\eqref{primaldualgap}, of $p^k$ and $\lambda^k$ from
\eqref{def:itquants} and of $\widehat{v}^k$ from \eqref{eq:sollinearized}.
Then there holds
\[
\Phi(u^k)= - \lambda^k\mnorm{u^k} + G(\mnorm{u^k}) + \cnorm{p^k}\mnorm{\widehat{v}^k} - G(\mnorm{\widehat{v}^k}).
\]
as well as
\begin{align} \label{estimatesforpsi}
\mnorm{u^k} \left(\cnorm{p^k}-\lambda^k \right)\leq \Phi(u^k) \leq \mnorm{\widehat{v}^k}\left(\cnorm{p^k}-\lambda^k\right),
\end{align}
where~$\widehat{v}^k$ is determined according to Proposition~\ref{prop:solutionoflinearizedformeas}.
In particular, we have
\[
r_j(u^k) \leq \Phi(u^k) \leq M_0\left(\cnorm{p^k}-\lambda^k \right).
\]
\end{lemma}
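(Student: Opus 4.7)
The plan is to exploit the explicit expression for $\widehat{v}^k$ as the maximizer in $\Phi(u^k)$ (Proposition~\ref{prop:solutionoflinearizedformeas}) and the two optimality relations implied by Proposition~\ref{prop:optimalityforsubproblems}, namely $\pair{p^k,u^k}=\lambda^k\mnorm{u^k}$ and $\lambda^k\in\partial G(\mnorm{u^k})$, together with the defining property of $\widehat{m}^k=\mnorm{\widehat{v}^k}$ as a minimizer of $m\mapsto -m\cnorm{p^k}+G(m)$ on $[0,M_0]$.

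First I would derive the identity for $\Phi(u^k)$. By Proposition~\ref{prop:solutionoflinearizedformeas},
\[
\Phi(u^k)=\pair{p^k,\widehat v^k}-\pair{p^k,u^k}+G(\mnorm{u^k})-G(\mnorm{\widehat v^k}).
\]
From~\eqref{eq:sollinearized} the measure $\widehat v^k$ is supported on the single point $\widehat x^k$ with Radon–Nikod\'ym derivative $p^k(\widehat x^k)/\cnorm{p^k}$ (for $\cnorm{p^k}>0$; the case $\cnorm{p^k}=0$ is trivial with $\widehat v^k=0$), so $\pair{p^k,\widehat v^k}=\cnorm{p^k}\mnorm{\widehat v^k}$. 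Proposition~\ref{prop:optimalityforsubproblems} gives $\pair{p^k,u^k}=\lambda^k\mnorm{u^k}$. Substituting yields the asserted identity
\[
\Phi(u^k)=-\lambda^k\mnorm{u^k}+G(\mnorm{u^k})+\cnorm{p^k}\mnorm{\widehat v^k}-G(\mnorm{\widehat v^k}).
\]

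Next I would bound this expression from below and above in terms of $\cnorm{p^k}-\lambda^k$. For the lower bound, the construction of $\widehat m^k$ via~\eqref{eq:normoflinearized} means $\widehat m^k=\mnorm{\widehat v^k}$ minimizes $m\mapsto -m\cnorm{p^k}+G(m)$ on $[0,M_0]$. Since $\mnorm{u^k}\leq M_0$, this gives
\[
-\mnorm{\widehat v^k}\cnorm{p^k}+G(\mnorm{\widehat v^k})\leq -\mnorm{u^k}\cnorm{p^k}+G(\mnorm{u^k}),
\]
equivalently $\cnorm{p^k}(\mnorm{u^k}-\mnorm{\widehat v^k})\leq G(\mnorm{u^k})-G(\mnorm{\widehat v^k})$. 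Plugging this into the identity for $\Phi(u^k)$ produces $\mnorm{u^k}(\cnorm{p^k}-\lambda^k)\leq \Phi(u^k)$. For the upper bound, the subgradient relation $\lambda^k\in\partial G(\mnorm{u^k})$ gives $G(\mnorm{u^k})-\lambda^k\mnorm{u^k}\leq G(\mnorm{\widehat v^k})-\lambda^k\mnorm{\widehat v^k}$, and inserting this in the identity yields $\Phi(u^k)\leq\mnorm{\widehat v^k}(\cnorm{p^k}-\lambda^k)$.

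Finally the compound estimate $r_j(u^k)\leq\Phi(u^k)\leq M_0(\cnorm{p^k}-\lambda^k)$ follows directly: the left inequality is Proposition~\ref{prop:primal_dual_bound}, while for the right one we combine the upper bound just proved with $\mnorm{\widehat v^k}=\widehat m^k\leq M_0$, using that $\cnorm{p^k}-\lambda^k\geq 0$ since $\lambda^k=\max_{x\in\mathcal A_k}P^k(x)\leq\max_{x\in\Omega}P^k(x)=\cnorm{p^k}$ by~\eqref{def:lambdak_2}. I do not anticipate a genuinely hard step here; the only subtlety is being careful with the degenerate case $\cnorm{p^k}=0$ (in which $\widehat v^k=0$, both bounds trivialize, and $\lambda^k=0$ from $0\leq\lambda^k\leq\cnorm{p^k}$), and with the sign conventions in the subgradient inequality.
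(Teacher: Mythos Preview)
Your proof is correct and follows essentially the same approach as the paper: the identity for $\Phi(u^k)$ via Propositions~\ref{prop:solutionoflinearizedformeas} and~\ref{prop:optimalityforsubproblems}, the lower bound from the minimality of $\widehat m^k$ for $m\mapsto -m\cnorm{p^k}+G(m)$ on $[0,M_0]$, the upper bound from the subgradient inequality $\lambda^k\in\partial G(\mnorm{u^k})$, and the final estimate from Proposition~\ref{prop:primal_dual_bound} together with $\mnorm{\widehat v^k}\leq M_0$. Your added remarks on the degenerate case $\cnorm{p^k}=0$ and the sign of $\cnorm{p^k}-\lambda^k$ are fine extra care but not needed for the argument.
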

\begin{proof}
According to Propositions~\ref{prop:solutionoflinearizedformeas} and \ref{prop:optimalityforsubproblems} there holds
\begin{align*}
\Phi(u^k)&= \pair{-p^k,u^k} + G(\mnorm{u^k}) + \pair{p^k,\widehat{v}^k} - G(\mnorm{\widehat{v}^k}) \\
&= - \lambda^k\mnorm{u^k} + G(\mnorm{u^k}) + \cnorm{p^k}\mnorm{\widehat{v}^k} - G(\mnorm{\widehat{v}^k}).
\end{align*}
Recall that~$M_0>0$ is a bound on the norm of the measures in~$E_j(u^0)$.
Since~$\widehat{v}^k$ is a solution of the partially linearized problem and~$\mnorm{u^k}\leq M_0$ due to~$u^k \in E_j(u^0)$, we further obtain
\[
-\cnorm{p^k}\,\mnorm{\widehat{v}^k} + G(\mnorm{\widehat{v}^k})
\leq -\cnorm{p^k}\,\mnorm{u^k} + G(\mnorm{u^k}),
\]
which gives the first inequality.
Using $\lambda^k  \in \partial
G(\mnorm{u^k})$, see Proposition~\ref{prop:optimalityforsubproblems}, we estimate
\begin{align*}
G(\mnorm{\widehat{v}^k}) \geq G(\mnorm{u^k}) + \lambda^k(\mnorm{\widehat{v}^k}-\mnorm{u^k}),
\end{align*}
which provides the second inequality.
The last inequality is a consequence of \(\mnorm{\widehat{v}^k} \leq M_0\) and~$r_j(u^k)
\leq \Phi(u^k)$ form Proposition~\ref{prop:primal_dual_bound}.
\end{proof}
The next result addresses the asymptotic behavior of~$\Phi(u^k)$. Together with~\eqref{estimatesforpsi}, this then yields convergence results for~$\lambda^k$ and~$\cnorm{p^k}$.
\begin{lemma} \label{lem:convofprimaldual}
There holds~$\lim_{k\rightarrow \infty} \Phi(u^k)=0$.
\end{lemma}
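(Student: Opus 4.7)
The plan is to evaluate the limit of $\Phi(u^k)$ term by term by exploiting the explicit representation
\[
\Phi(u^k) = -\lambda^k\mnorm{u^k} + G(\mnorm{u^k}) + \cnorm{p^k}\mnorm{\widehat{v}^k} - G(\mnorm{\widehat{v}^k})
\]
from Lemma~\ref{lem:PsiEstimatePDAP}, and comparing with the corresponding expression for $\bar{u}$. The background facts we need are all already available: by Theorem~\ref{thm:convergenceGCGmeas}, $u^k\rightharpoonup^*\bar{u}$ (uniqueness of $\bar{u}$ is guaranteed by Proposition~\ref{prop:uniquenesspdap}) and $G(\mnorm{u^k})\to G(\mnorm{\bar{u}})$, while Lemma~\ref{lem:estforsatesandadjoint} yields $p^k\to\bar{p}$ in $\Cc(\Omega,H)$ and thus $\cnorm{p^k}\to\cnorm{\bar{p}}=\bar{\lambda}$.

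Next, I would handle the product $\lambda^k\mnorm{u^k}$. Proposition~\ref{prop:optimalityforsubproblems} identifies it with the duality pairing $\pair{p^k,u^k}$. Splitting
\[
\abs*{\pair{p^k,u^k}-\pair{\bar{p},\bar{u}}}
\leq \cnorm{p^k-\bar{p}}\mnorm{u^k}+\abs*{\pair{\bar{p},u^k-\bar{u}}},
\]
and using $\mnorm{u^k}\leq M_0$, the first term tends to zero by strong convergence of $p^k$ and the second by weak* convergence of $u^k$. Combined with Theorem~\ref{thm:optimalityconditionequPDAP}, which gives $\pair{\bar{p},\bar{u}}=\bar{\lambda}\mnorm{\bar{u}}$, this yields $\lambda^k\mnorm{u^k}\to\bar{\lambda}\mnorm{\bar{u}}$. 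Notably, this argument avoids having to establish the convergence $\mnorm{u^k}\to\mnorm{\bar{u}}$ separately (which would require extra work to get the $\limsup$ inequality).

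For the remaining pair of terms, I would introduce the auxiliary function
\[
h(s)\coloneqq\max_{m\in[0,M_0]}\bigl[\,ms-G(m)\,\bigr],
\]
which is well-defined, convex, and continuous on $\R$ as a finite supremum of affine functions (finiteness uses the radial unboundedness of $G$ from Assumption~\ref{ass:PDAP}). By construction of $\widehat{v}^k$ and the computation in the proof of Proposition~\ref{prop:solutionoflinearizedformeas}, $\cnorm{p^k}\mnorm{\widehat{v}^k}-G(\mnorm{\widehat{v}^k})=h(\cnorm{p^k})$, and an analogous identification holds at $\bar{u}$: since $\bar{\lambda}\in\partial G(\mnorm{\bar{u}})$ and $\mnorm{\bar{u}}\leq M_0$, the point $m=\mnorm{\bar{u}}$ attains the maximum in the definition of $h(\bar{\lambda})$, giving $h(\bar{\lambda})=\bar{\lambda}\mnorm{\bar{u}}-G(\mnorm{\bar{u}})$. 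Continuity of $h$ therefore implies $h(\cnorm{p^k})\to\bar{\lambda}\mnorm{\bar{u}}-G(\mnorm{\bar{u}})$.

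Assembling the pieces,
\[
\Phi(u^k)=-\lambda^k\mnorm{u^k}+G(\mnorm{u^k})+h(\cnorm{p^k})
\longrightarrow -\bar{\lambda}\mnorm{\bar{u}}+G(\mnorm{\bar{u}})+\bar{\lambda}\mnorm{\bar{u}}-G(\mnorm{\bar{u}})=0,
\]
which is the claim. The only delicate step is the one involving $h(\bar{\lambda})$: one has to verify that the supremum in the definition of $h$ is attained at $m=\mnorm{\bar{u}}$, but this is immediate from $\bar{\lambda}\in\partial G(\mnorm{\bar{u}})$ by convexity. Everything else reduces to combining the already established convergences.
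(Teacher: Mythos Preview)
Your argument is correct and takes a somewhat different route from the paper. You share the treatment of the first two terms (using $\lambda^k\mnorm{u^k}=\pair{p^k,u^k}\to\pair{\bar p,\bar u}=\bar\lambda\mnorm{\bar u}$ and $G(\mnorm{u^k})\to G(\mnorm{\bar u})$), but diverge on the last two. The paper does not introduce the conjugate-type function $h$; instead it inserts the variational inequality $\langle -\bar p,\bar u\rangle+G(\mnorm{\bar u})\le\langle -\bar p,\widehat v^k\rangle+G(\mnorm{\widehat v^k})$ (which is just optimality of $\bar u$) into the expression for $\Phi(u^k)$, obtaining the upper bound
\[
\Phi(u^k)\le \abs{\bar\lambda\mnorm{\bar u}-\lambda^k\mnorm{u^k}}+\abs{G(\mnorm{u^k})-G(\mnorm{\bar u})}+M_0\cnorm{p^k-\bar p},
\]
and then lets each term go to zero. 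Your approach is more structural: it recognizes $\cnorm{p^k}\mnorm{\widehat v^k}-G(\mnorm{\widehat v^k})$ as the value $h(\cnorm{p^k})$ of a truncated Fenchel conjugate and uses continuity of $h$ together with $\cnorm{p^k}\to\bar\lambda$. This computes the limit of $\Phi(u^k)$ directly rather than sandwiching it, and makes transparent why the last two terms converge. The paper's route is marginally shorter and avoids checking continuity of $h$; your route makes the underlying convex-analytic structure explicit. One minor remark: $h$ is not a supremum over a \emph{finite} set, and its finiteness comes from the restriction $m\in[0,M_0]$ together with $G(0)<\infty$ (which follows from monotonicity and properness), not from radial unboundedness of $G$; this does not affect the validity of your argument.
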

\begin{proof}
Since~$\bar{u}$ is a solution to~\eqref{def:pdaproblem}, the dual variable~$\bar{p}=-\nabla f(\bar{u})$ satisfies
\begin{align} \label{eq:ineqopt}
\langle -\bar{p}, \bar{u} \rangle+ G(\mnorm{\bar{u}}) \leq \langle -\bar{p}, \widehat{v}^k \rangle+ G(\mnorm{\widehat{v}^k})
\end{align}
By adding and subtracting \(G(\mnorm{\bar{u}})\) and \(\bar{\lambda} \mnorm{\bar{u}} = \langle \bar{p},\bar{u} \rangle\) to the definition of \(\Phi(u^k)\), we estimate
\begin{align*}
 \Phi(u^k)& = \bar{\lambda} \mnorm{\bar{u}}-\lambda^k \mnorm{u^k} +G(\mnorm{u^k})-G(\mnorm{\bar{u}}) + \langle \bar{p},\widehat{v}^k-\bar{u} \rangle  + G(\mnorm{\bar{u}})-G(\mnorm{\widehat{v}^k}) +\langle p^k-\bar{p}, \widehat{v}^k \rangle
\\ &\leq \left | \bar{\lambda} \mnorm{\bar{u}}-\lambda^k \mnorm{u^k} \right| + \left | G(\mnorm{u^k})-G(\mnorm{\bar{u}})\right|+ M_0 \cnorm{p^k-\bar{p}}
\end{align*}
where we have used~\eqref{eq:ineqopt} and~$\mnorm{\widehat{v}^k}\leq M_0$. Due to the
weak* convergence of~$u^k$ due to Theorem~\ref{thm:convergenceGCGmeas}
and~$p^k \rightarrow \bar{p}$ in~$\Cc(\Omega,H)$ with Lemma~\ref{lem:estforsatesandadjoint} we
conclude 
\begin{align*}
\lambda^k \mnorm{u^k}= \langle p^k, u^k  \rangle \rightarrow \langle \bar{p}, \bar{u}  \rangle=\bar{\lambda} \mnorm{\bar{u}}.
\end{align*}
Finally note that
\begin{align*}
\lim_{k\rightarrow \infty } \left \lbrack \left | G(\mnorm{u^k})-G(\mnorm{\bar{u}})\right|+ M_0 \cnorm{p^k-\bar{p}} \right\rbrack=0
\end{align*}
according to Theorem~\ref{thm:convergenceGCGmeas} and Lemma~\ref{lem:estforsatesandadjoint}, respectively. Together with~$\Phi(u^k)\geq 0$ this concludes the proof.
\end{proof}
\begin{corollary} \label{corr:converofmax}
Let~$\bar{\lambda},\lambda^k$ and~$p^k$ be defined according to~\eqref{def:optquant} and~\eqref{def:itquants}. There holds
\begin{align*}
 \lim_{k \rightarrow \infty} \left \lbrack \abs{\bar{\lambda}-\cnorm{p^k}} + \abs{\lambda^k-\cnorm{p^k}}+ |\lambda^k -\bar{\lambda}| \right \rbrack=0.
\end{align*}
\end{corollary}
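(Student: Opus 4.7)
The plan is to derive the three convergences from the two main ingredients already at hand: the uniform convergence $p^k \to \bar p$ in $\Cc(\Omega,H)$ (Lemma~\ref{lem:estforsatesandadjoint}), and the vanishing of the gap functional $\Phi(u^k) \to 0$ (Lemma~\ref{lem:convofprimaldual}). The third quantity, $|\lambda^k - \bar{\lambda}|$, will then come for free by the triangle inequality.

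First I would handle $\abs{\bar{\lambda} - \cnorm{p^k}}$. Since $p^k \to \bar p$ in $\Cc(\Omega, H)$, the reverse triangle inequality gives
\[
\abs{\cnorm{p^k} - \cnorm{\bar p}} \leq \cnorm{p^k - \bar p} \to 0,
\]
and by definition $\cnorm{\bar p} = \bar{\lambda}$.

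Next I would treat $\abs{\lambda^k - \cnorm{p^k}}$. By definition of $\lambda^k$ as a max over the subset $\mathcal{A}_k \subset \Omega$, one has $0 \leq \cnorm{p^k} - \lambda^k$. Then I invoke the lower bound in~\eqref{estimatesforpsi} of Lemma~\ref{lem:PsiEstimatePDAP},
\[
\mnorm{u^k}\left(\cnorm{p^k} - \lambda^k\right) \leq \Phi(u^k).
\]
By Corollary~\ref{corr.characofiterates} (more precisely, by the computation $\mnorm{u^k} \geq \mnorm{\bar u}/2$ carried out in its proof, noting $\mnorm{\bar u} > 0$ since $\bar \lambda > 0$), we have $\mnorm{u^k}$ bounded away from zero for $k$ large. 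Dividing and using $\Phi(u^k) \to 0$ from Lemma~\ref{lem:convofprimaldual} yields $\cnorm{p^k} - \lambda^k \to 0$.

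Finally, the remaining quantity is controlled by
\[
\abs{\lambda^k - \bar{\lambda}} \leq \abs{\lambda^k - \cnorm{p^k}} + \abs{\cnorm{p^k} - \bar{\lambda}},
\]
both of which have just been shown to vanish. Summing the three bounds gives the claim. There is no real obstacle here: all nontrivial work has already been done in the preceding lemmas; the only point requiring attention is to ensure that the denominator $\mnorm{u^k}$ does not degenerate, which is exactly the content of Corollary~\ref{corr.characofiterates} and the hypothesis $\bar u \neq 0$ (a consequence of $\bar{\lambda} > 0$ under Assumption~\ref{ass:strongsource1}).
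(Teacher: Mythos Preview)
Your proof is correct and follows essentially the same approach as the paper: the reverse triangle inequality with Lemma~\ref{lem:estforsatesandadjoint} for the first term, the lower bound in~\eqref{estimatesforpsi} combined with $\mnorm{u^k}\geq \mnorm{\bar u}/2$ and Lemma~\ref{lem:convofprimaldual} for the second, and the triangle inequality for the third. The only cosmetic difference is that the paper cites Theorem~\ref{thm:convergenceGCGmeas} directly for the weak* convergence yielding the lower bound on $\mnorm{u^k}$, whereas you route this through Corollary~\ref{corr.characofiterates}; the substance is identical.
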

\begin{proof}
Utilizing Lemma~\ref{lem:estforsatesandadjoint} we observe that
\begin{align*}
\abs*{\bar{\lambda}-\cnorm{p^k}}
 = \abs*{\cnorm{\bar{p}}-\cnorm{p^k}}
 \leq \cnorm{\bar{p}-p^k} \leq c\,r_j(u^k)^{1/2} \rightarrow 0,
\end{align*}
for $k \to \infty$.
Since~$u^k\rightharpoonup^* \bar{u}$ with Theorem~\ref{thm:convergenceGCGmeas}
and~$\mnorm{\bar{u}}>0$, there holds $\mnorm{u^k}\geq\mnorm{\bar{u}}/2 > 0$ for all $k$ large enough.
We consequently obtain 
\begin{align*}
0 \leq (\mnorm{\bar{u}}/2) \, (\cnorm{p^k}-\lambda^k)\leq  \Phi(u^k) \to 0,
\end{align*}
with~\eqref{estimatesforpsi} and Lemma~\ref{lem:convofprimaldual}.
Finally,~$\lambda^k \rightarrow \bar{\lambda}$ follows from the triangle inequality.
\end{proof}

Combining Lemma~\ref{lem:QuadraticGrowth} with the convergence results of
Lemma~\ref{lem:estforsatesandadjoint} and Corollary~\ref{corr:converofmax} we conclude
that the new candidate point~$\widehat{x}^k$
and the support of~$u^k$ are located in the vicinity of the set~$\{\,\bar{x}_n\,\}^N_{n=1}$.
Moreover each optimal Dirac delta position~$\bar{x}_n$ is approximated by at least one point in~$\supp u^k$. 
\begin{corollary} \label{coroll:localizationofsupp}
Let~$0<R'\leq R$, $\sigma>0$ denote the constants from Lemma~\ref{lem:QuadraticGrowth},
recall \(\Omega_{R'} = \cup_{n=1}^{N} B_{R'}(\bar{x}_n)\), and
let~$\widehat{x}^k$ denote the point determined in step~1.\ of
Algorithm~\ref{alg:PDAPgeneral},
\(\mathcal{A}_k = \supp u^k\) 
and~$P^k(x) =\hnorm{p^k(x)}$ as defined in~\eqref{def:itquants}.
For $k$ large enough and~$n = 1, \dots,N$ there holds
\begin{align} \label{upperboundoutsideiterate}
P^k(x)\leq \lambda^k -\frac{\sigma}{2} 
&\quad \text{for all } x\in \Omega \setminus \Omega_{R'}, \\
\widehat{x}^k  \in  \Omega_{R'},
\quad \mathcal{A}_k \subset \Omega_{R'}, &\quad\text{and } \mathcal{A}_k \cap B_{R'}(\bar{x}_n) \neq \emptyset.
\end{align}
\end{corollary}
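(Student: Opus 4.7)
The plan is to combine the pointwise estimates for the exact dual certificate $\bar{P}$ from Lemma~\ref{lem:QuadraticGrowth} with the uniform convergence $P^k \to \bar{P}$ in $\Cc(\Omega)$ provided by Lemma~\ref{lem:estforsatesandadjoint} and the scalar convergences $\lambda^k \to \bar{\lambda}$ and $\cnorm{p^k} \to \bar{\lambda}$ from Corollary~\ref{corr:converofmax}. Once these are available, the upper bound on $\Omega\setminus\Omega_{R'}$, the localization of $\widehat{x}^k$, and the inclusion $\mathcal{A}_k\subset\Omega_{R'}$ follow by straightforward perturbation. The only non-perturbative claim is that each ball around an optimal support point contains at least one point of $\mathcal{A}_k$; this requires using the weak* convergence of $u^k$ itself.

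For the pointwise bound on $\Omega \setminus \Omega_{R'}$, I would use \eqref{upperboundoutside} to write, for any such $x$,
\[
P^k(x)
\leq \bar{P}(x) + \|P^k-\bar{P}\|_{\Cc(\Omega)}
\leq \lambda^k - \sigma + |\bar{\lambda}-\lambda^k| + \|P^k-\bar{P}\|_{\Cc(\Omega)},
\]
and choose $k$ large enough so that both error terms are below $\sigma/4$. For $\widehat{x}^k\in\Omega_{R'}$, note that $P^k(\widehat{x}^k) = \cnorm{p^k}$ and that $\cnorm{p^k} - \lambda^k \to 0$ by Corollary~\ref{corr:converofmax}, so eventually $P^k(\widehat{x}^k) > \lambda^k - \sigma/2$, which by the first estimate rules out $\widehat{x}^k\in\Omega\setminus\Omega_{R'}$. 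For $\mathcal{A}_k \subset \Omega_{R'}$, Corollary~\ref{corr.characofiterates} gives $u^k \neq 0$ and $\lambda^k > 0$ for large $k$, and then \eqref{def:lambdak_2} yields $P^k(x_i) = \lambda^k$ for every $x_i \in \mathcal{A}_k$, again incompatible with $x_i \in \Omega\setminus\Omega_{R'}$.

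The main obstacle is the non-emptiness statement $\mathcal{A}_k\cap B_{R'}(\bar{x}_n)\neq\emptyset$, since a priori nothing in the upper bound on $P^k$ prevents the support points of $u^k$ from clustering around a strict subset of the $\bar{x}_n$. I would argue by contradiction using the weak* convergence from Theorem~\ref{thm:convergenceGCGmeas}. Assume there is an index $n^*$ and a subsequence (not relabeled) with $\mathcal{A}_k \cap B_{R'}(\bar{x}_{n^*}) = \emptyset$. Pick a scalar cutoff $\chi \in \Cc(\Omega)$ with $\chi(\bar{x}_{n^*}) = 1$ and $\supp\chi \subset B_{R'}(\bar{x}_{n^*})$, which by the separation in~\eqref{separation} vanishes at every other $\bar{x}_m$, and set
\[
\varphi(x) \coloneqq \chi(x)\,\frac{\bar{\bd{u}}_{n^*}}{\hnorm{\bar{\bd{u}}_{n^*}}} \in \Cc(\Omega, H).
\]
Evaluating the duality pairing on the finite support representations gives $\pair{\varphi, u^k} = 0$ for every $k$ in the subsequence (since $\varphi$ vanishes at every $x_i\in\mathcal{A}_k$), while $\pair{\varphi, \bar{u}} = \hnorm{\bar{\bd{u}}_{n^*}} > 0$ by Assumption~\ref{ass:strongsource2}. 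This contradicts $u^k \rightharpoonup^* \bar{u}$ and completes the proof.
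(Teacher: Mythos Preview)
Your proof is correct and, for the first three claims, follows the paper's argument essentially verbatim (the paper even simplifies your step for $\widehat{x}^k$ by noting directly that $P^k(\widehat{x}^k)=\cnorm{p^k}\geq\lambda^k$, so no appeal to Corollary~\ref{corr:converofmax} is needed there).

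For the non-emptiness claim $\mathcal{A}_k\cap B_{R'}(\bar{x}_n)\neq\emptyset$ the paper takes a slightly different route: it tests against $\chi_n\,p^k$ (scalar cutoff times the \emph{iterate} dual variable) and uses the subproblem optimality~\eqref{form:subprobOptimality} to obtain the identity $\langle \chi_n p^k, u^k\rangle=\lambda^k\,\mnorm{u^k\rvert_{B_{R'}(\bar{x}_n)}}$, which then converges to $\bar{\lambda}\,\hnorm{\bar{u}(\bar{x}_n)}>0$. Your contradiction with the fixed test function $\chi\,\bar{\bd{u}}_{n^*}/\hnorm{\bar{\bd{u}}_{n^*}}$ is more elementary, since it only invokes weak* convergence against a single element of $\Cc(\Omega,H)$ and never uses~\eqref{form:subprobOptimality}. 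The paper's version, however, yields as a byproduct the quantitative convergence $\mnorm{u^k\rvert_{B_{R'}(\bar{x}_n)}}\to\hnorm{\bar{u}(\bar{x}_n)}$, which is tacitly reused later (e.g.\ in the proof of Theorem~\ref{lem:linearrate} to bound $\widehat{\mu}^k$ from below).
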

\begin{proof}
Let an arbitrary point~$x\in \Omega \setminus \Omega_{R'}$ be given and recall the function~$\bar{P}$ from~\eqref{def:optquant}. We estimate
\begin{align*}
P^k(x)&= \bar{P}(x)+ P^k(x)- \bar{P}(x)  \leq \bar{\lambda}-\sigma + |P^k(x)-\bar{P}(x)|  \\ &\leq \bar{\lambda}-\sigma + \|P^k-\bar{P}\|_{\Cc(\Omega)}\\&\leq \lambda^k + |\lambda^k- \bar{\lambda}| + \|P^k-\bar{P}\|_{\Cc(\Omega)}-\sigma.
\end{align*}
where we used~\eqref{upperboundoutside} in the first inequality.
Choosing $k$ large enough such that, with Lemma~\ref{lem:estforsatesandadjoint} and Corollary~\ref{corr:converofmax},
\[
|\lambda^k- \bar{\lambda}| + \|\bar{P}-P^k \|_{\mathcal{C}(\Omega)} \leq \frac{\sigma}{2},
\]
yields \eqref{upperboundoutsideiterate}.
Next let~$x \in \mathcal{A}_k = \supp u^k$ be arbitrary. Then there holds $P^k(x)=\lambda^k$. Consequently
we have $x \in \Omega_{R'}$ due to~\eqref{upperboundoutsideiterate}.
In the same way we conclude~$\widehat{x}^k \in \Omega_{R'}$
since~$P^k(\widehat{x}^k)=\cnorm{p^k}\geq \lambda^k$. Fix now an index~$n$ and denote by~$u^k_n$ the
restriction of~$u^k$ to~$\bar{B}_{R'}(\bar{x}_n)$. Invoking Urysohn's lemma there exists
a cut-off function~$\chi_n \in \Cc(\Omega)$ with~$\chi_n=1$ on~$\bar{B}_{R'}(\bar{x}_n)$
and~$\chi_n=0$ on~$\bar{B}_{R'}(\bar{x}_i)$ for~$i \neq n$. The weak* convergence of the
iterates due to Theorem~\ref{thm:convergenceGCGmeas} and the strong convergence of the dual
variables due to Lemma~\ref{lem:estforsatesandadjoint} yield
\begin{align*}
\lambda^k \mnorm{u^k_n}= \langle \chi_n p^k , u^k \rangle 
\rightarrow \langle \chi_n \bar{p}, \bar{u} \rangle
= \bar{\lambda} \hnorm{\bar{u}(\bar{x}_n)} > 0.
\end{align*}
Since~$\lambda^k \rightarrow \bar{\lambda}$ with Corollary~\ref{corr:converofmax}
and~$\bar{\lambda}>0$ with Assumption~\ref{ass:strongsource1},
we have~$\mnorm{u^k_n}\neq 0$ for~$k$ large enough.
\end{proof}


Next, we quantify the distance of the candidate point~$\widehat{x}^k$ to the closest
point in~$\supp \bar{u}$ in terms of the residual~$r_j(u^k)$. For this purpose we rely on
the observation that the behavior of the iterated dual certificate~$P^k$ on the
ball~$B_{R'}(\bar{x}_n)$ is similar to that of~$\bar{P}$
from~\eqref{contquadgrowth}, i.e.\ it assumes a unique local maximum on~$B_{R'}(\bar{x}_n)$ which satisfies a quadratic growth condition.
\begin{lemma}
  \label{lem:localmin}
Let~$0 < R' \leq 0$ denote the constant from Lemma~\ref{lem:QuadraticGrowth}. For
all~$n=1,\ldots,N$ and~$k$ large
enough the function $P^k \in \Cc(\Omega)$
with~$P^k(x)=\hnorm{p^k(x)}$ for $x\in \Omega$, as defined in~\eqref{def:itquants}, assumes a unique local
maximum~$\widehat{x}^k_n$ on each ball $B_{R'}(\bar{x}_n)$. Furthermore there holds
\begin{align} \label{quadgrowthforPk}
  P^k(x) + \frac{\theta_0}{8}|x -\widehat{x}^k_n|_{\R^d}^2\leq P^k(\widehat{x}^k_n)
 \quad \text{for all } x \in  B_{R'}(\bar{x}_n),\; n=1,\ldots,N,
\end{align}
where~$\theta_0>0$ is the coercivity constant from Assumption~\ref{ass:strongsource2}, as well as
\begin{align} \label{convofmax}
  \abs{\widehat{x}^k_n-\bar{x}_n}_{\R^d}
    \leq c \, {r_j(u^k)}^{1/2}, \quad\text{for all } n=1, \ldots,N.
\end{align}
Moreover, for the global maximum~$\widehat{x}^k$ from step 1.\ of Algorithm~\ref{alg:PDAPgeneral},
there is a~$\widehat{n}_k \in \{\,1,\dots,N\,\}$
with~$\widehat{x}^k = \widehat{x}^k_{\,\widehat{n}_k}$.
\end{lemma}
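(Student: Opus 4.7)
The plan is to show that for $k$ large enough, $P^k$ is a $\Cc^2$ perturbation of $\bar{P}$ on the neighborhood $\bar{\Omega}_R$, with both $\nabla P^k$ and $\nabla^2 P^k$ converging uniformly at the rate $r_j(u^k)^{1/2}$ to their limits. Once that is in place, the negative definiteness of $\nabla^2 \bar{P}$ in \eqref{hessianinside} transfers (with a factor $1/2$ loss in the constant) to $\nabla^2 P^k$, so $P^k$ is strongly concave on each ball $B_{R'}(\bar{x}_n)$, which will give the existence and uniqueness of the local maximizer, the quadratic growth, and the rate by a standard perturbation argument around the critical point $\bar{x}_n$ of $\bar{P}$.

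To establish the $\Cc^2$ convergence, I would use that $p^k = \Kstar(-\nabla F(y^k))$ and $\bar{p} = \Kstar(-\nabla F(\bar{y}))$. The definition of $\Kstar$ together with the regularity assumption \eqref{eq:Ctwo_kernel} on the kernel gives, by differentiating the identity $(p^k(x),\bd{u})_H = (\kernel(x,\bd{u}),-\nabla F(y^k))_Y$ in $x$, the bound
\begin{equation*}
\|p^k-\bar{p}\|_{\Cc^2(\bar{\Omega}_R,H)} \leq c\,\|\nabla F(y^k)-\nabla F(\bar{y})\|_Y \leq c\,{r_j(u^k)}^{1/2},
\end{equation*}
where the last inequality is Lemma~\ref{lem:estforsatesandadjoint} and the constant absorbs the uniform bounds on $\nabla_x\kernel$ and $\nabla_x^2\kernel$ over $\bar{\Omega}_R\times\{\|\bd{u}\|\leq 1\}$. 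Since $\bar{P}(x)\geq\bar{\lambda}/2>0$ on $\bar{\Omega}_R$ by \eqref{separation} and $p^k\to\bar{p}$ in $\Cc$, the function $P^k = \hnorm{p^k(\cdot)}$ is obtained by composition of $p^k$ with a $\Cc^2$ map on a set bounded away from $0$, which transfers the estimate to $\|P^k-\bar{P}\|_{\Cc^2(\bar{\Omega}_R)}\leq c\,{r_j(u^k)}^{1/2}$.

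Combining this with \eqref{hessianinside} yields $-\inner{\xi,\nabla^2 P^k(x)\xi}_{\R^d}\geq (\theta_0/4)|\xi|^2$ on $\Omega_{R'}$ for $k$ large, so $P^k$ is strictly concave on each convex ball $B_{R'}(\bar{x}_n)$ and hence has at most one critical point there. Since $P^k\to\bar{P}$ uniformly and $\bar{P}$ attains its strict maximum on $\bar{B}_{R'}(\bar{x}_n)$ at the interior point $\bar{x}_n$, for $k$ large the maximizer of $P^k$ on the closed ball lies in the interior and is therefore a critical point $\widehat{x}^k_n$, unique by strict concavity. The quadratic growth \eqref{quadgrowthforPk} follows from a second-order Taylor expansion of $P^k$ about $\widehat{x}^k_n$ together with the Hessian lower bound $\theta_0/4$ (which yields the factor $\theta_0/8$). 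For the rate \eqref{convofmax}, I would apply the mean value theorem to $\nabla P^k$: from $\nabla P^k(\widehat{x}^k_n)=0=\nabla\bar{P}(\bar{x}_n)$,
\begin{equation*}
\nabla^2 P^k(\widetilde{x})(\widehat{x}^k_n-\bar{x}_n) = \nabla\bar{P}(\bar{x}_n) - \nabla P^k(\bar{x}_n),
\end{equation*}
and the negative-definite bound gives $|\widehat{x}^k_n-\bar{x}_n|\leq (4/\theta_0)\|\nabla P^k - \nabla\bar{P}\|_{\Cc(\bar{\Omega}_R)} \leq c\,{r_j(u^k)}^{1/2}$.

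Finally, by Corollary~\ref{coroll:localizationofsupp} we have $\widehat{x}^k\in\Omega_{R'}$, so $\widehat{x}^k\in B_{R'}(\bar{x}_{\widehat{n}_k})$ for some (unique, by disjointness) index $\widehat{n}_k$. Since $\widehat{x}^k$ maximizes $P^k$ over all of $\Omega$, it maximizes it in particular over $B_{R'}(\bar{x}_{\widehat{n}_k})$, and uniqueness of the local maximizer on this ball forces $\widehat{x}^k = \widehat{x}^k_{\widehat{n}_k}$. The main technical obstacle is the $\Cc^2$ perturbation estimate for $P^k$: it relies critically on the kernel regularity \eqref{eq:Ctwo_kernel} to pass from the $L^\infty$-type estimate $\|p^k-\bar{p}\|_{\Cc(\Omega,H)}\leq c\,r_j(u^k)^{1/2}$ of Lemma~\ref{lem:estforsatesandadjoint} to a bound on derivatives in $x$, and on the non-vanishing of $p^k$ on $\Omega_R$ to differentiate the norm twice.
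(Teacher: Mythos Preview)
Your proposal is correct and follows essentially the same strategy as the paper: establish the $\Cc^2(\bar{\Omega}_R)$ convergence $P^k\to\bar P$ at rate $r_j(u^k)^{1/2}$ via the kernel regularity~\eqref{eq:Ctwo_kernel} and Lemma~\ref{lem:estforsatesandadjoint}, transfer the uniform negative definiteness of $\nabla^2\bar P$ from~\eqref{hessianinside} to $\nabla^2 P^k$ with constant $\theta_0/4$, and deduce existence/uniqueness of the interior maximizer and the quadratic growth~\eqref{quadgrowthforPk} by Taylor expansion. Two minor deviations are worth noting. First, to place the maximizer of $P^k$ in the interior of $B_{R'}(\bar x_n)$ the paper invokes~\eqref{upperboundoutsideiterate} directly, whereas you argue via uniform convergence to $\bar P$; both are fine. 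Second, for the rate~\eqref{convofmax} the paper bounds $\bar P(\bar x_n)-\bar P(\widehat x^k_n)$ from below by~\eqref{contquadgrowth} and from above by a first-order Taylor estimate on $\bar P-P^k$, while you use a mean-value argument on $\nabla P^k$; your route is more direct but, as written, the single-point mean value theorem is not valid for the vector-valued map $\nabla P^k\colon\R^d\to\R^d$ when $d>1$---replace it by the integral form (or take the inner product with $\widehat x^k_n-\bar x_n$) to make the step rigorous.
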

\begin{proof}
Let~$R>0$ denote the radius from Assumption~\ref{ass:strongsource2} and let~$\bar{p}$
and~$\bar{P}$ be defined as in~\eqref{def:optquant}.
Due to the strong convergence of~$\nabla F(Ku^k)$ in~$Y$ from
Lemma~\ref{lem:estforsatesandadjoint}
and $ \Kstar \in \mathcal{L} \left( Y ,\Cc^2(\bar{\Omega}_R, H) \right)$ as a consequence of
Assumption~\ref{ass:strongsource2}, we also have~$p^k\rightarrow \bar{p}$
in~$\Cc^2(\bar{\Omega}_R, H)$.
In particular, due to~\eqref{separation}, we 
conclude~$\hnorm{p^k(x)}\geq\bar{\lambda}/4$,~$x \in \bar{\Omega}_R$,
and thus~$P^k \in \Cc^2(\bar{\Omega}_R)$ for
all~$k$ large enough. The strong convergence~$P^k \rightarrow \bar{P}$
in~$\Cc^2(\bar{\Omega}_R)$ follows immediately. Now fix an index~$n$ and
let~$R' < R$ denote the radius from Lemma~\ref{lem:QuadraticGrowth}.
For all~$x\in B_{R'}(\bar{x}_n)$,~$\zeta \in \R^d$ and~$k$ large enough we estimate
\begin{align} \label{eq:hesspertub}
 -\inner*{\xi,\,\nabla^2 P^k(x)\,\xi}_{\R^d}
 &= -\inner*{\xi,\,\big(\nabla^2 \bar{P}(x)+\nabla^2 P^k(x)-\nabla^2 \bar{P}(x) \big)\,\xi}_{\R^d} 
 \\ & \geq \left(\frac{\theta_0}{2}- \|\nabla^2 P^k(x)-\nabla^2 \bar{P}(x)\|_{\R^{d\times d}} \right) \, |\xi|^2_{\R^d}
  \geq \frac{\theta_0}{4}\, |\xi|^2_{\R^d}\notag
\end{align}
where~$\|\cdot\|_{\R^{d\times d}}$ denotes the spectral norm. Here we used
Lemma~\ref{lem:QuadraticGrowth} in the first inequality and the uniform convergence
of~$\nabla^2 P^k$ in the second one. Hence~$P^k$ restricted
to~$\bar{B}_{R'}(\bar{x}_n)$ is uniformly concave and thus
together with~\eqref{upperboundoutsideiterate}, which implies that no maximum can be
assumed on the boundary of~\(B_{R'}(\bar{x}_n)\),
admits a unique maximum~$\widehat{x}^k_n \in B_{R'}(\bar{x}_n)$.
It satisfies the necessary first order conditions 
\(\nabla P^k(\widehat{x}^k_n) = 0\).
Let~$x \in B_{R'}(\bar{x}_n)$ be arbitrary but fixed. By Taylor's theorem and~\eqref{eq:hesspertub} we obtain
\begin{align*}
P^k(\widehat{x}^k_n)&= P^k(x)-\inner*{\nabla P^k(\widehat{x}^k_n),\,x-\widehat{x}^k_n}_{\R^d}- \frac{1}{2}\inner*{x-\widehat{x}^k_n,\,\nabla^2 P^k(\widetilde{x})\,(x-\widehat{x}^k_n)}_{\R^d}
\\ & \geq P^k(x)+ \frac{\theta_0}{8}\, |x-\widehat{x}^k_n|^2_{\R^d}
\end{align*}
for some~$\widetilde{x} \in B_{R'}(\bar{x}_n)$. Since~$x$ and~$n$ where chosen arbitrary we conclude~\eqref{quadgrowthforPk}.
Next we prove the estimate in~\eqref{convofmax}. For this purpose we invoke Lemma~\ref{lem:QuadraticGrowth} and~$P^k(\widehat{x}^k_n)\geq P^k(\bar{x}_n)$ to estimate
\begin{align*}
\frac{\theta_0}{4} |\widehat{x}^k_n-\bar{x}_n|^2_{\R^d} \leq \bar{P}(\bar{x}_n)- \bar{P}(\widehat{x}^k_n) \leq \bar{P}(\bar{x}_n)-P^k(\bar{x}_n)- \bar{P}(\widehat{x}^k_n)+P^k(\widehat{x}^k_n) \leq \max_{x\in \bar{\Omega}_R}\left|\nabla \bar{P}(x)-\nabla P^k(x)\right|_{\R^d}\, | \widehat{x}^k_n-\bar{x}_n |_{\R^d},
\end{align*}
using a Taylor expansion for~$\bar{P}-P^k$ in the final step. We readily verify
that the entries of the gradient~$\nabla \bar{P}(x)-\nabla P^k(x) \in \R^d$ satisfy for
all $x\in \Omega_R$ and \(i = 1,\ldots,d\):
\begin{align*}
\abs{\partial_i \bar{P}(x) - \partial_i P^k(x)}
&= \left |(\bar{p}(x), \partial_{i} \bar{p}(x))_H/ \hnorm{\bar{p}(x)}- (p^k(x), \partial_{i} p^k(x))_H/ p^k(x) \right|
\\ &\leq \hnorm{\partial_{i} \bar{p} (x) - \partial_{i} p^k (x)} 
+ \hnorm{\partial_{i} \bar{p}(x)} \bhnorm{p^k(x)/\norm{p^k(x)}_H - \bar{p}(x)/\hnorm{\bar{p}(x)}}
\\ & \leq  \hnorm{\partial_{i} \bar{p} (x)-\partial_{i} p^k(x) }+2(\hnorm{\partial_{i} \bar{p}(x)} /\hnorm{p^k(x)})\hnorm{\bar{p}(x)-p^k(x)}
 \\& \leq (1+8(\|\bar{p}\|_{\Cc^2(\bar{\Omega}_R,H)} /\bar{\lambda})) \|p^k-\bar{p}\|_{\Cc^2(\bar{\Omega}_R,H)}.
\end{align*}
As in Lemma~\ref{lem:estforsatesandadjoint} we estimate
\begin{align*}
\|p^k-\bar{p}\|_{\Cc^2(\bar{\Omega}_R,H)}=\|\Kstar \nabla F(y^k)-\Kstar \nabla F(\bar{y})\|_{\Cc^2(\bar{\Omega}_R,H)} \leq \|\Kstar\|_{\mathcal{L}(Y,\Cc^2(\bar{\Omega}_R,H))}\, \ynorm{\nabla F(y^k)- \nabla F(\bar{y})} \leq c r_j(u^k)^{1/2}.
\end{align*}
Note that
\begin{align*}
\max_{x \in \bar{\Omega}_R}|\nabla P^k (x)-\nabla \bar{P}(x)|_{\R^d} \leq c \max_{x \in \bar{\Omega}_R} \max_{j \in \{1,\cdots,d\}} \left | \left( \nabla \bar{P}(x)-\nabla P^k(x) \right)_j \right| \leq c r_j(u^k)^{1/2}
\end{align*}
and thus
\begin{align*}
\frac{\theta_0}{4} |\widehat{x}^k_n-\bar{x}_n|_{\R^d} \leq \max_{x\in \bar{\Omega}_R}\left|\nabla \bar{P}(x)-\nabla P^k(x)\right|_{\R^d} \leq c r_j(u^k)^{1/2}
\end{align*}
Dividing by~$(\theta_0/4)>0$ we conclude~\eqref{convofmax}.
Finally we point out that~$\widehat{x}^k$ is a global maximum of~$P^k$ and~$\widehat{x}^k
\in \bigcup^N_{n=1} \bar{B}_{R'}(\bar{x}_n) $ for all~$k$ large enough with
Corollary~\ref{coroll:localizationofsupp}. Hence, we conclude~$\widehat{x}^k \in
\{\,\widehat{x}^k_n\,\}^N_{n=1}$.
\end{proof}
We finish this section with two a priori estimates for the support of~$u^k$ as
consequences of Lemma~\ref{lem:QuadraticGrowth} and Lemma~\ref{lem:localmin}.
\begin{lemma} \label{lem:convestimatesforpoints}
For all $n = 1, \ldots, N$ and $k$ large enough there holds
\begin{align} \label{contowardscontsupp}
\max_{x \in \mathcal{A}_k  \cap B_{R}(\bar{x}_n)} |x- \bar{x}_n|_{\R^d}
\leq c \left(|\lambda^k- \bar{\lambda}|^{1/2} +  r_j(u^k)^{1/4}\right).
\end{align}
Moreover denote by~$\{\,\widehat{x}^k_n\,\}^N_{n=1}$ the set of local maximizers of~$P^k$
on \(\Omega_{R'}\) from Lemma~\ref{lem:localmin}. Then we have
\begin{align} \label{contowardsnew}
\max_{x \in \mathcal{A}_k  \cap B_{R}(\bar{x}_n)} \abs{x -\widehat{x}^k_n}_{\R^d}
  \leq c \left(P^k(\widehat{x}^k_n)-\lambda^k\right)^{1/2}.
\end{align}
\end{lemma}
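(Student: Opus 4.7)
The plan is to treat the two estimates separately, but both follow from the same template: a quadratic growth inequality for the relevant dual certificate, combined with the key observation that $P^k(x) = \lambda^k$ whenever $x \in \mathcal{A}_k = \supp u^k$ (which is~\eqref{def:lambdak_2}).

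For the second estimate~\eqref{contowardsnew}, I would fix $n$ and $x \in \mathcal{A}_k \cap B_R(\bar{x}_n)$. By Corollary~\ref{coroll:localizationofsupp}, for $k$ sufficiently large $\mathcal{A}_k \subset \Omega_{R'}$, so in fact $x \in B_{R'}(\bar{x}_n)$ (the balls $B_{R'}(\bar{x}_n)$ are disjoint for $n$ distinct, cf.~\eqref{separation}). Then I invoke the quadratic growth property~\eqref{quadgrowthforPk} of $P^k$ around its local maximizer $\widehat{x}^k_n$ from Lemma~\ref{lem:localmin}, which gives
\[
\tfrac{\theta_0}{8}\abs{x - \widehat{x}^k_n}_{\R^d}^2 \leq P^k(\widehat{x}^k_n) - P^k(x) = P^k(\widehat{x}^k_n) - \lambda^k,
\]
using $P^k(x) = \lambda^k$. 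Taking square roots yields the claim with $c = (8/\theta_0)^{1/2}$.

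For the first estimate~\eqref{contowardscontsupp}, the same localization places $x$ in $B_{R'}(\bar{x}_n)$, so I can apply the quadratic growth~\eqref{contquadgrowth} for the limiting dual certificate $\bar{P}$:
\[
\tfrac{\theta_0}{4}\abs{x - \bar{x}_n}_{\R^d}^2 \leq \bar{P}(\bar{x}_n) - \bar{P}(x) = \bar{\lambda} - \bar{P}(x).
\]
The point is now to express $\bar{\lambda} - \bar{P}(x)$ in terms of iterate quantities. I write
\[
\bar{\lambda} - \bar{P}(x) = (\bar{\lambda} - \lambda^k) + (\lambda^k - P^k(x)) + (P^k(x) - \bar{P}(x)),
\]
where the middle term vanishes since $x \in \mathcal{A}_k$ implies $P^k(x) = \lambda^k$. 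The last difference is bounded in sup-norm by Lemma~\ref{lem:estforsatesandadjoint}, giving $|P^k(x) - \bar{P}(x)| \leq \|P^k - \bar{P}\|_{\Cc(\Omega)} \leq c\,r_j(u^k)^{1/2}$. Hence
\[
\tfrac{\theta_0}{4}\abs{x - \bar{x}_n}_{\R^d}^2 \leq \abs{\bar{\lambda} - \lambda^k} + c\,r_j(u^k)^{1/2},
\]
and taking square roots, together with the elementary inequality $(a+b)^{1/2} \leq a^{1/2} + b^{1/2}$ for $a,b \geq 0$, produces the stated bound.

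Neither step presents a serious obstacle — both are essentially two-line calculations once the previous perturbation lemmas are in place. The main conceptual point is simply to recognise that the identity $P^k(x) = \lambda^k$ on $\mathcal{A}_k$ eliminates the cross term in each case, reducing the bound to either (i) the gap $P^k(\widehat{x}^k_n) - \lambda^k$ (which controls the distance of support points to the interior maximum of $P^k$) or (ii) the deviation of the scalar $\lambda^k$ from $\bar{\lambda}$ together with the uniform convergence $P^k \to \bar{P}$.
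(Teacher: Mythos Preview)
Your proposal is correct and follows essentially the same approach as the paper: both estimates are obtained by combining the relevant quadratic growth inequality (\eqref{contquadgrowth} for $\bar P$ and \eqref{quadgrowthforPk} for $P^k$) with the identity $P^k(x)=\lambda^k$ on $\mathcal{A}_k$, and then invoking Lemma~\ref{lem:estforsatesandadjoint} for the uniform bound $\|P^k-\bar P\|_{\Cc(\Omega)}\leq c\,r_j(u^k)^{1/2}$. The only cosmetic difference is that the paper writes the two-term split $|\bar P(\bar x_n)-P^k(x)|+|P^k(x)-\bar P(x)|$ directly, whereas you insert an explicit vanishing middle term; the content is identical.
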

\begin{proof}
First, let~$0<R'\leq R$ denote the constant from Lemma~\ref{lem:QuadraticGrowth}.
Observe that $\mathcal{A}_k \cap
B_{R'}(\bar{x}_n)\neq  \emptyset$ with Corollary~\ref{coroll:localizationofsupp}.
Let $x \in \mathcal{A}_k \cap B_{R'}(\bar{x}_n)$ be arbitrary but fixed.
Using~\eqref{contquadgrowth} we obtain
\begin{align*}
|x-\bar{x}_n|_{\R^d}
  &\leq (4/\theta_0)^{1/2} \left(\bar{P}(\bar{x}_n)-\bar{P}(x)\right)^{1/2}
    \leq (4/\theta_0)^{1/2}
    \left(\abs{\bar{P}(\bar{x}_n)-P^k(x)}+\abs{P^k(x)-\bar{P}(x)}\right)^{1/2} \\
  &\leq (4/\theta_0)^{1/2} \left(\abs{\bar{\lambda}-\lambda^k}^{1/2} +
    \|P^k-\bar{P}\|_{\Cc(\Omega)}^{1/2}\right) \\
 & \leq c \left(\abs{\bar{\lambda}-\lambda^k}^{1/2} + r_j(u^k)^{1/4}\right)
\end{align*}
for some~$c>0$ independent of~$x$.
Here we used~$P^k(x)=\lambda^k$ for all~$x \in
\mathcal{A}_k$ and $\bar{P}(\bar{x}_n)=\bar{\lambda}$ as well as
Lemma~\ref{lem:estforsatesandadjoint} in the final inequality. Taking the maximum over all
$x \in \mathcal{A}_k \cap B_{R'}(\bar{x}_n)$, and observing that
\(\mathcal{A}_k \cap B_{R'}(\bar{x}_n) = \mathcal{A}_k \cap B_{R}(\bar{x}_n)\) for \(k\)
large enough due to~\eqref{upperboundoutsideiterate} yields~\eqref{contowardscontsupp}.
Moreover, applying~\eqref{quadgrowthforPk}, we get
\[
|x - \widehat{x}^k_n|_{\R^d} \leq (8/\theta_0)^{1/2} \left(P^k(\widehat{x}^k_n)-P^k(x)\right)^{1/2}= (8/\theta_0)^{1/2} \left(P^k(\widehat{x}^k_n)-\lambda^k\right)^{1/2}
\]
for all $x\in \mathcal{A}_k \cap B_{R'}(\bar{x}_n) = \mathcal{A}_k \cap B_{R}(\bar{x}_n)$.
Maximizing with respect to~$x$ yields~\eqref{contowardsnew}.
\end{proof}

\subsubsection{Construction of a descent direction}
With these auxiliary estimates at hand we now proceed to prove the linear
convergence rate for the residual~$r_j(u^k)$.
For this, assume~$k$ large enough such that all previous results
hold and
recall the definition of the trial point
\[
\uklump
  \coloneqq u^k\rvert_{\Omega\setminus \widehat{B}_k}
  + \widehat{\mu}^k \frac{p^k(\widehat{x}^k)}{\cnorm{p^k}} \delta_{\widehat{x}^k}
\quad\text{with } \widehat{B}_k = B_R(\bar{x}_{\,\widehat{n}_k}),
\quad\text{and } \widehat{\mu}^k \coloneqq \mnorm{u^k\rvert_{\widehat{B}_k}}
\]
from~\eqref{eq:uklump}, where
\(\widehat{n}_k\) is the index of the support point \(\bar{x}_{\,\widehat{n}_k}\) closest
to \(\widehat{x}^k\) as defined in Lemma~\ref{lem:localmin}. The next statement
establishes that the search direction \(\Delta^k = \uklump- u^k\) provides a descent
direction with descent proportional to the first order error quantity
\(\cnorm{p^k}-\lambda^k\) related to the gap \(\Phi(u^k)\) (cf.\ Lemma~\ref{lem:PsiEstimatePDAP}).
\begin{proposition}
\label{prop:propertiesOfLump}
Let~$p^k$ and~$\lambda^k$ be defined according to~\eqref{def:itquants}.
For all~$k\geq 1$ the trial point~$\widehat{u}^k$ satisfies
\[
 G(\mnorm{\uklump}) = G(\mnorm{u^k}), \quad \pair{p^k,\uklump- u^k}
 = \widehat{\mu}^k\left(\cnorm{p^k}-\lambda^k\right).
\]
\end{proposition}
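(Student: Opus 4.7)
The plan is to verify the two identities by direct computation, using the defining formula for $\uklump$ together with the optimality conditions from Proposition~\ref{prop:optimalityforsubproblems} (equivalently, the relations~\eqref{def:lambdak_2} and~\eqref{form:subprobOptimality}) for $u^k$ as a solution of the subproblem at the previous step.

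For the first identity, note that $\widehat{x}^k \in \widehat{B}_k$ by~\eqref{eq:hat_ball}, so the two summands in~\eqref{eq:uklump} have disjoint supports and the total variation decomposes additively. The norm of the added Dirac term is
\[
\widehat{\mu}^k\,\hnorm{p^k(\widehat{x}^k)}/\cnorm{p^k} = \widehat{\mu}^k,
\]
since $\hnorm{p^k(\widehat{x}^k)} = \cnorm{p^k}$ by the choice of $\widehat{x}^k$ in step~1.\ of Algorithm~\ref{alg:PDAPgeneral}. Combining with the decomposition $\mnorm{u^k} = \mnorm{u^k\rvert_{\Omega\setminus\widehat{B}_k}} + \mnorm{u^k\rvert_{\widehat{B}_k}} = \mnorm{u^k\rvert_{\Omega\setminus\widehat{B}_k}} + \widehat{\mu}^k$ gives $\mnorm{\uklump} = \mnorm{u^k}$, and therefore $G(\mnorm{\uklump}) = G(\mnorm{u^k})$.

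For the second identity, the contribution of $u^k\rvert_{\Omega\setminus \widehat{B}_k}$ cancels between $\uklump$ and $u^k$, so that
\[
\pair{p^k,\uklump - u^k}
= \widehat{\mu}^k\,\frac{(p^k(\widehat{x}^k),p^k(\widehat{x}^k))_H}{\cnorm{p^k}} - \pair{p^k, u^k\rvert_{\widehat{B}_k}}
= \widehat{\mu}^k\cnorm{p^k} - \pair{p^k, u^k\rvert_{\widehat{B}_k}},
\]
again using $\hnorm{p^k(\widehat{x}^k)} = \cnorm{p^k}$. The key remaining step is to evaluate $\pair{p^k, u^k\rvert_{\widehat{B}_k}} = \lambda^k \widehat{\mu}^k$. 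This is where the optimality of $u^k$ for the previous subproblem enters: for every $x_i \in \mathcal{A}_k$, relation~\eqref{form:subprobOptimality} gives $u^k(x_i)/\hnorm{u^k(x_i)} = p^k(x_i)/\lambda^k$, while~\eqref{def:lambdak_2} gives $\hnorm{p^k(x_i)} = \lambda^k$. Hence $(p^k(x_i), u^k(x_i))_H = \lambda^k\,\hnorm{u^k(x_i)}$, and summing over $x_i \in \mathcal{A}_k \cap \widehat{B}_k$ yields $\lambda^k\,\mnorm{u^k\rvert_{\widehat{B}_k}} = \lambda^k\widehat{\mu}^k$, as required.

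The calculation is essentially bookkeeping; there is no real obstacle. The only point worth flagging is that the active set $\mathcal{A}_k = \supp u^k$ has already been pruned in step~4.\ of Algorithm~\ref{alg:PDAPgeneral}, so every $x_i \in \mathcal{A}_k$ carries a nonzero coefficient and the alignment formula~\eqref{form:subprobOptimality} applies uniformly in the sum. In particular, the identity holds for every $k\geq 1$ (not only asymptotically), which matches the statement of the proposition and isolates the descent contribution $\widehat{\mu}^k(\cnorm{p^k}-\lambda^k)$ attributable to the newly inserted point, to be exploited in Theorem~\ref{lem:linearrate}.
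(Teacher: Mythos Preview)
Your proof is correct and follows essentially the same approach as the paper: both verify $\mnorm{\uklump}=\mnorm{u^k}$ by splitting the total variation over $\widehat{B}_k$ and its complement, and both compute the pairing by cancelling the $\Omega\setminus\widehat{B}_k$ contribution and invoking~\eqref{def:lambdak_2} and~\eqref{form:subprobOptimality} to get $\pair{p^k,u^k\rvert_{\widehat{B}_k}}=\lambda^k\widehat{\mu}^k$. Your version is slightly more explicit about why $\hnorm{p^k(\widehat{x}^k)}=\cnorm{p^k}$ and why the alignment formula applies to every $x_i\in\mathcal{A}_k$, but the argument is the same.
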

\begin{proof}
We note that
\(
\mnorm{u^k}
= \mnorm{u^k\rvert_{\Omega\setminus \widehat{B}_k}} + \mnorm{u^k\rvert_{\widehat{B}_k}} 
= \mnorm{\uklump}
\),
and consequently~$G(\mnorm{\uklump}) = G(\mnorm{u^k})$. Furthermore by construction
of~\eqref{eq:uklump} and conditions~\eqref{def:lambdak_2} and~\eqref{form:subprobOptimality}
there holds
\begin{align*}
\pair{ p^k, \uklump - u^k } 
= \pair*{ p^k, \widehat{\mu}^k \frac{p^k(\widehat{x}^k)}{\cnorm{p^k}} \delta_{\widehat{x}^k} -  u^k\rvert_{\widehat{B}_k}}
= \widehat{\mu}^k\cnorm{p^k} - \mnorm{u^k\rvert_{\widehat{B}_k}}\,\lambda^k
= \widehat{\mu}^k\left(\cnorm{p^k} - \lambda^k\right)
\qquad\qedhere
\end{align*}
\end{proof}
Moreover, the error between \(\uklump\) and \(u^k\) in terms of its observations
\(K \uklump\) and \(K u^k\) can be bounded in terms of the aforementioned dual error quantity.
\begin{lemma}
\label{lem:ObservationLumpingSmall}
There exist a \(c > 0\) such that for all $k$ large enough there holds
\begin{align*}
\|K(\uklump-u^k)\|_Y 
\leq c\, \widehat{\mu}^k \left(\cnorm{p^k}-\lambda^k\right)^{1/2}.
\end{align*}
\end{lemma}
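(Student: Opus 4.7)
The plan is to expand $K(\uklump - u^k)$ using the explicit form of the iterate and then estimate term-by-term using the previously established distance bound~\eqref{contowardsnew}. First, since $u^k\rvert_{\widehat{B}_k} = \sum_{x_i \in \mathcal{A}_k \cap \widehat{B}_k} u^k(x_i)\,\delta_{x_i}$ with $\widehat{\mu}^k = \sum_i \mu_i$ for $\mu_i \coloneqq \hnorm{u^k(x_i)}$, applying the optimality relation~\eqref{form:subprobOptimality} gives $u^k(x_i) = \mu_i\, p^k(x_i)/\lambda^k$. Using linearity of $\kernel$ in its second argument together with $\widehat{\mu}^k = \sum_i \mu_i$, I rewrite
\[
 K(\uklump - u^k) = \sum_{i} \mu_i \bigl[\kernel(\widehat{x}^k, e^k) - \kernel(x_i, e_i^k)\bigr],
 \quad e^k \coloneqq \frac{p^k(\widehat{x}^k)}{\cnorm{p^k}},\; e_i^k \coloneqq \frac{p^k(x_i)}{\lambda^k},
\]
where both $e^k, e_i^k \in H$ have unit norm, and $x_i \in \mathcal{A}_k \cap \widehat{B}_k$.

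Next I split each summand by adding and subtracting $\kernel(x_i, e^k)$ and invoke linearity in the second argument to arrive at
\[
 \kernel(\widehat{x}^k, e^k) - \kernel(x_i, e_i^k)
  = \bigl[\kernel(\widehat{x}^k, e^k) - \kernel(x_i, e^k)\bigr]
  + \kernel\bigl(x_i,\, e^k - e_i^k\bigr).
\]
For the first bracket, smoothness of the kernel in $x$ on $\bar{\Omega}_R$ (see~\eqref{eq:Ctwo_kernel}) together with $\hnorm{e^k}=1$ yields a Lipschitz estimate $\|\kernel(\widehat{x}^k,e^k) - \kernel(x_i,e^k)\|_Y \leq c|\widehat{x}^k-x_i|_{\R^d}$. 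By Lemma~\ref{lem:localmin} the point $\widehat{x}^k$ is the unique local maximizer $\widehat{x}^k_{\,\widehat{n}_k}$ of $P^k$ on $\widehat{B}_k$, so~\eqref{contowardsnew} with $\cnorm{p^k} = P^k(\widehat{x}^k)$ gives
\(
 |\widehat{x}^k - x_i|_{\R^d} \leq c\,(\cnorm{p^k}-\lambda^k)^{1/2}.
\)

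For the second bracket I use the operator bound $\|\kernel(x_i,\cdot)\|_{\mathcal{L}(H,Y)} \leq \norm{\Kstar}_{\mathcal{L}(Y,\Cc)}$ and estimate the difference of unit vectors via the triangle inequality
\[
 \hnorm{e^k - e_i^k}
   \leq \frac{\hnorm{p^k(\widehat{x}^k) - p^k(x_i)}}{\cnorm{p^k}}
   + \hnorm{p^k(x_i)}\,\left|\frac{1}{\cnorm{p^k}} - \frac{1}{\lambda^k}\right|
   = \frac{\hnorm{p^k(\widehat{x}^k) - p^k(x_i)}}{\cnorm{p^k}}
   + \frac{\cnorm{p^k}-\lambda^k}{\cnorm{p^k}},
\]
using $\hnorm{p^k(x_i)} = \lambda^k$. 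The first piece is controlled by the Lipschitz constant of $p^k$ on $\bar{\Omega}_R$ (bounded uniformly in $k$ since $p^k \to \bar{p}$ in $\Cc^2(\bar{\Omega}_R,H)$ by the argument in Lemma~\ref{lem:localmin}), giving again the factor $|\widehat{x}^k - x_i|_{\R^d} \leq c(\cnorm{p^k}-\lambda^k)^{1/2}$. Since $\cnorm{p^k} \geq \bar{\lambda}/2 > 0$ for $k$ large (by Corollary~\ref{corr:converofmax} and Assumption~\ref{ass:strongsource1}), and since $(\cnorm{p^k}-\lambda^k) \leq (\cnorm{p^k}-\lambda^k)^{1/2}$ up to a constant for $k$ large (the quantity tends to zero), both pieces are bounded by $c(\cnorm{p^k}-\lambda^k)^{1/2}$.

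Summing the two bracket bounds over $i$ and pulling the common factor out yields
\[
 \|K(\uklump - u^k)\|_Y \leq \sum_i \mu_i \cdot c\,(\cnorm{p^k}-\lambda^k)^{1/2}
 = c\,\widehat{\mu}^k\,(\cnorm{p^k}-\lambda^k)^{1/2},
\]
as claimed. No genuine obstacle arises: the argument is essentially a bookkeeping exercise combining the spatial regularity of the kernel~\eqref{eq:Ctwo_kernel} with the sharp localization estimate~\eqref{contowardsnew}, which is itself the quadratic-growth consequence of Assumption~\ref{ass:strongsource2} and responsible for the crucial exponent $1/2$ on the right-hand side.
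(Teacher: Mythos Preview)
Your proof is correct and follows essentially the same approach as the paper: decompose term-by-term into a spatial-difference part and a coefficient-direction part, control the former via the kernel's Lipschitz regularity on $\bar{\Omega}_R$ and~\eqref{contowardsnew}, and the latter via the Lipschitz bound on $p^k$ together with $\cnorm{p^k}-\lambda^k\to 0$. The only cosmetic difference is that the paper carries out the estimate by duality (testing against $\varphi\in Y$ and using $\Kstar\varphi\in\Cc^{0,1}(\bar{\Omega}_R,H)$), whereas you work directly with $\kernel$ in $Y$; the splits and the key inputs are identical.
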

\begin{proof}
Let an arbitrary $x \in \mathcal{A}_k \cap \widehat{B}_k$ be given and
denote by $u^k(x) \in H$ the coefficient of the associated
Dirac delta function.
Given $\varphi \in Y$ there holds
\begin{align*}
\inner*{K\left(\frac{p^k(\widehat{x}^k)}{\cnorm{p^k}}\delta_{\widehat{x}^k}-\frac{u^k(x)}{\hnorm{u^k(x)}}\delta_x\right),\; \varphi}_Y
&= \pair*{ \Kstar \varphi,\; \frac{p^k(\widehat{x}^k)}{\cnorm{p^k}}\delta_{\widehat{x}^k}-\frac{p^k(x)}{\lambda^k}\delta_x}\\
&= \inner*{\left[\Kstar\varphi\right](\widehat{x}^k),\; \frac{p^k(\widehat{x}^k)}{\cnorm{p^k}}}_H
- \inner*{\left[\Kstar\varphi\right](x),\; \frac{p^k(x)}{\lambda^k}}_H\\
&\leq  \norm{\Kstar\varphi}_{\mathcal{C}^{0,1}(\bar{\Omega}_R, H)} \abs{\widehat{x}^k - x}_{\R^d}
+ \cnorm{\Kstar\varphi} \norm*{\frac{p^k(\widehat{x}^k)}{\cnorm{p^k}}-\frac{p^k(x)}{\lambda^k}}_H,
\end{align*}
using \eqref{form:subprobOptimality}.
Now, with~\eqref{contowardsnew} and the continuity of \(\Kstar\), the first term is estimated by
\begin{align*}
\norm{\Kstar\varphi}_{\mathcal{C}^{0,1}(\bar{\Omega}_R, H)} \abs{\widehat{x}^k - x}_{\R^d}
\leq c\, \norm{\varphi}_Y \left(\cnorm{p^k}-\lambda^k\right)^{1/2},
\end{align*}
for \(k\) large enough
with a constant~$c>0$ independent of~$x$.
For the second term we use \(\norm{p^k(\widehat{x}^k)}_H =
\cnorm{p^k}\) to estimate
\begin{align*}
\norm*{\frac{p^k(\widehat{x}^k)}{\cnorm{p^k}}-\frac{p^k(x)}{\lambda^k}}_H
& \leq \left|\frac{1}{\cnorm{p^k}}-\frac{1}{\lambda^k}\right| \hnorm{p^k(\widehat{x}^k)}  + \frac{1}{\lambda^k}\norm{p^k(\widehat{x}^k)-p^k(x)}_H \\& = \frac{\cnorm{p^k}-\lambda^k}{\lambda^k}
+ \frac{1}{\lambda^k}\norm{p^k(\widehat{x}^k)-p^k(x)}_H \\
&\leq \frac{1}{\lambda^k}\left[\left(\cnorm{p^k}-\lambda^k\right) + \norm{p^k}_{\mathcal{C}^{0,1}(\bar{\Omega}_R,H)}\abs{\widehat{x}^k-x}_{\R^d}\right]\\&
\leq \frac{1}{\lambda^k}\left[\left(\cnorm{p^k}-\lambda^k\right)^{1/2}+c\right]
\left(\cnorm{p^k}-\lambda^k\right)^{1/2},
\end{align*}
with $c$ as before. Here we used~$\hnorm{p^k(\widehat{x}^k)}=\cnorm{p^k}$ as well as~$\lambda^k \leq \cnorm{p^k}$ in the first equality Since $\lambda^k \rightarrow \bar{\lambda}>0$
and $\norm{p^k}_{\mathcal{C}^{0,1}(\bar{\Omega}_R,H)} \rightarrow
\norm{\bar{p}}_{\mathcal{C}^{0,1}(\bar{\Omega}_R,H)} > 0$ there holds for sufficiently large \(k\) that
\begin{align*}
\inner*{K \left(\frac{p^k(\widehat{x}^k)}{\cnorm{p^k}}\delta_{\widehat{x}^k}-\frac{u^k(x)}{\hnorm{u^k(x)}}\delta_x \right),\; \varphi}_Y
\leq c \left(\cnorm{p^k}-\lambda^k\right)^{1/2} \norm{\varphi}_Y,
\end{align*}
for all \(\varphi \in Y\)
and consequently
\begin{align*}
\norm*{K\left(\frac{p^k(\widehat{x}^k)}{\cnorm{p^k}}\delta_{\widehat{x}^k}-\frac{u^k(x)}{\hnorm{u^k(x)}}\delta_x \right )}_Y
 \leq c\left(\cnorm{p^k}-\lambda^k\right)^{1/2}.
\end{align*}
Now, we rewrite
\[
K(\uklump-u^k)
 = \sum_{x \in \mathcal{A}_k \cap \widehat{B}_k}
 \hnorm{u^k(x)} K \left(\frac{p^k(\widehat{x}^k)}{\cnorm{p^k}}\delta_{\widehat{x}^k}
  -\frac{u^k(x)}{\hnorm{u^k(x)}} \delta_{x}\right),
\]
and applying
the estimate for all $x$ from above and using \(\widehat{\mu}^k =\mnorm{u^k\rvert_{\widehat{B}_k}}
 = \sum_{x \in \mathcal{A}_k \cap \widehat{B}_k}\hnorm{u^k(x)}\) yields the desired result.
\end{proof}
The previous results establish the weak* convergence of~$\uklump$ towards~$\bar{u}$.
\begin{corollary}
There holds \(\uklump \rightharpoonup^* \bar{u}\) and 
\(j(\uklump) \rightarrow j(\bar{u})\) for \(k \to \infty\).
\end{corollary}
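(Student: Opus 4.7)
The plan is to prove convergence of the functional values first and then deduce weak\textsuperscript{*} convergence via a standard compactness and uniqueness argument. The key observation is that the trial point $\uklump$ differs from $u^k$ only on the ball $\widehat{B}_k$ and that this difference is controlled in terms of quantities already shown to vanish.

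First, I would handle the cost term. By Proposition~\ref{prop:propertiesOfLump} we have $\mnorm{\uklump} = \mnorm{u^k}$, and by Theorem~\ref{thm:convergenceGCGmeas} we know $G(\mnorm{u^k}) \to G(\mnorm{\bar{u}})$. Hence $G(\mnorm{\uklump}) \to G(\mnorm{\bar{u}})$ without further work. Next, I would show that $K\uklump \to K\bar{u}$ strongly in $Y$. From the triangle inequality
\[
\norm{K\uklump - K\bar{u}}_Y \leq \norm{K(\uklump - u^k)}_Y + \norm{Ku^k - K\bar{u}}_Y.
\]
The second term vanishes by weak\textsuperscript{*}-to-strong continuity of $K$ combined with $u^k \rightharpoonup^* \bar{u}$ from Theorem~\ref{thm:convergenceGCGmeas}. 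For the first term, Lemma~\ref{lem:ObservationLumpingSmall} gives
\[
\norm{K(\uklump - u^k)}_Y \leq c\, \widehat{\mu}^k (\cnorm{p^k} - \lambda^k)^{1/2} \leq c\, M_0 (\cnorm{p^k} - \lambda^k)^{1/2},
\]
which tends to zero by Corollary~\ref{corr:converofmax}. Since $\bar{y} = K\bar{u} \in \dom F$ and $F$ is continuous on its (open) domain by Assumption~\ref{ass:PDAP}(iii.), we conclude $F(K\uklump) \to F(K\bar{u})$, and therefore $j(\uklump) \to j(\bar{u})$.

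For the weak\textsuperscript{*} convergence, I would argue via compactness and uniqueness. Since $\mnorm{\uklump} = \mnorm{u^k} \leq M_0$, the sequence $\uklump$ is bounded in $\M(\Omega,H)$, which is the dual of the separable space $\Cc(\Omega,H)$, hence weak\textsuperscript{*} sequentially precompact. Let $u^*$ be any weak\textsuperscript{*} accumulation point of $\uklump$. By weak\textsuperscript{*} lower semicontinuity of $j$ (noted after Assumption~\ref{ass:PDAP}) and the just-established convergence $j(\uklump) \to j(\bar{u})$, we obtain $j(u^*) \leq j(\bar{u})$, so $u^*$ is a global minimizer of~\eqref{def:pdaproblem}. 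Under Assumption~\ref{ass:strongsource1}, Proposition~\ref{prop:uniquenesspdap} guarantees uniqueness of the minimizer, so $u^* = \bar{u}$. Since every subsequence has a further subsequence converging weak\textsuperscript{*} to the same limit $\bar{u}$, the full sequence satisfies $\uklump \rightharpoonup^* \bar{u}$.

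No step presents a genuine obstacle; all the work has already been done in Proposition~\ref{prop:propertiesOfLump}, Lemma~\ref{lem:ObservationLumpingSmall}, Corollary~\ref{corr:converofmax}, and Proposition~\ref{prop:uniquenesspdap}. The only mild subtlety is ensuring that convergence of the functional values transfers to weak\textsuperscript{*} convergence, which requires the uniqueness of $\bar{u}$ to avoid passing to subsequences indefinitely; this is supplied precisely by Assumption~\ref{ass:strongsource1}.
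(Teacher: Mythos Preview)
Your proof is correct and follows essentially the same route as the paper: use $G(\mnorm{\uklump})=G(\mnorm{u^k})$ from Proposition~\ref{prop:propertiesOfLump}, control $K(\uklump-u^k)$ via Lemma~\ref{lem:ObservationLumpingSmall} together with $\cnorm{p^k}-\lambda^k\to 0$, conclude $j(\uklump)\to j(\bar{u})$, and then invoke boundedness plus uniqueness of the minimizer for weak\textsuperscript{*} convergence. The paper compresses the first part into the single line $0\le j(\uklump)-j(\bar{u})\le |j(u^k)-j(\bar{u})|+|F(K\uklump)-F(Ku^k)|$ and leaves the compactness argument implicit, but the substance is identical.
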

\begin{proof}
We readily obtain
\begin{align*}
0 \leq j(\uklump)-j(\bar{u}) \leq |j(u^k)-j(\bar{u})|+|F(K\uklump)-F(Ku^k)|.
\end{align*}
The first term tends to zero since~$u^k$ is a minimizing sequence for~$j$ and the second
vanishes due to Lemma~\ref{lem:ObservationLumpingSmall}. Thus~$\uklump$ gives a minimizing
sequence for~$j$. Since~$\bar{u}$ is the unique minimizer of~$j$ the claim on the weak*
convergence follows.
\end{proof}
Finally, we show that~\(\Delta^k=\uklump - u^k\) yields a
search direction that achieves a linear decrease in the objective functional.
\begin{theorem} \label{lem:linearrate}
Suppose that Assumption~\ref{ass:PDAP}, \ref{ass:strongsource1},
\ref{ass:strongsource2}, and~\ref{ass:regularF} hold and that \(u^k\) is generated by
Algorithm~\ref{alg:PDAPgeneral}.
There exists an index $\bar{k}\geq 1$, and constants $c > 0$ and~$\zeta_1 \in (0,1)$ with
\begin{align*}
r_j(u^k) \leq c \, \zeta_1^k \quad \text{for all } k \geq \bar{k}.
\end{align*}
\end{theorem}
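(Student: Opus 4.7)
The plan is to construct an explicit intermediate update $u^{k+1/2} = u^k + s_* \Delta^k$ with $\Delta^k = \uklump - u^k$, prove that it achieves a strict linear decrease in the residual, and then transfer the bound to the algorithmic iterate $u^{k+1}$ via the fact that $u^{k+1}$ resolves the subproblem on $\mathcal{A}_{k+1/2} \supset \supp u^{k+1/2}$.

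First, for arbitrary $s \in [0,1]$ I would estimate $j(u^k + s \Delta^k)$ in two pieces. For the non-smooth term, the decomposition
\[
u^k + s \Delta^k = u^k\rvert_{\Omega\setminus \widehat{B}_k} + (1-s)\, u^k\rvert_{\widehat{B}_k} + s \widehat{\mu}^k \frac{p^k(\widehat{x}^k)}{\cnorm{p^k}} \delta_{\widehat{x}^k}
\]
together with the triangle inequality gives $\mnorm{u^k + s\Delta^k} \leq \mnorm{u^k}$, so the monotonicity of $G$ (Assumption~\ref{ass:PDAP}(i.)) yields $G(\mnorm{u^k + s\Delta^k}) \leq G(\mnorm{u^k})$. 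For the smooth part, the descent lemma for functions with Lipschitz gradient on $K E_j(u^0)$ (Assumption~\ref{ass:regularF}) combined with Proposition~\ref{prop:propertiesOfLump} gives
\[
F(K(u^k + s\Delta^k)) \leq F(Ku^k) - s\, \widehat{\mu}^k (\cnorm{p^k} - \lambda^k) + \tfrac{L}{2} s^2 \norm{K\Delta^k}_Y^2.
\]
The crucial input is now Lemma~\ref{lem:ObservationLumpingSmall}, which upgrades the naive bound $\norm{K\Delta^k}_Y \lesssim \mnorm{\Delta^k}$ to $\norm{K\Delta^k}_Y^2 \leq c^2 (\widehat{\mu}^k)^2 (\cnorm{p^k} - \lambda^k)$. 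Substituting and factoring yields
\[
j(u^k + s\Delta^k) - j(u^k) \leq \widehat{\mu}^k (\cnorm{p^k} - \lambda^k)\Bigl[-s + \tfrac{L c^2 \widehat{\mu}^k}{2} s^2\Bigr].
\]

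Next I would optimize the step size. Since $\widehat{\mu}^k \leq \mnorm{u^k} \leq M_0$ by the descent property of the algorithm, the choice $s_* = \min\{1,\,1/(Lc^2 M_0)\} \in (0,1]$ is admissible and one easily checks $-s_* + \tfrac{Lc^2 \widehat{\mu}^k}{2} s_*^2 \leq -s_*/2$. Combining with Lemma~\ref{lem:PsiEstimatePDAP} (which yields $\cnorm{p^k} - \lambda^k \geq r_j(u^k)/M_0$) gives
\[
r_j(u^k + s_* \Delta^k) \leq r_j(u^k) - \tfrac{s_*}{2M_0}\,\widehat{\mu}^k\, r_j(u^k).
\]
To close the recursion I need a uniform lower bound $\widehat{\mu}^k \geq \mu_0 > 0$. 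This follows from the argument already used in the proof of Corollary~\ref{coroll:localizationofsupp}: the weak* convergence $u^k \rightharpoonup^* \bar{u}$ combined with strong convergence of $p^k$ implies $\mnorm{u^k\rvert_{B_R(\bar{x}_n)}} \to \hnorm{\bar{u}(\bar{x}_n)} > 0$ for every $n=1,\dots,N$, and since $\widehat{B}_k = B_R(\bar{x}_{\widehat{n}_k})$ with $\widehat{n}_k \in \{1,\dots,N\}$ finite, we obtain $\widehat{\mu}^k \geq \mu_0 := \tfrac12 \min_n \hnorm{\bar{u}(\bar{x}_n)}$ for all $k \geq \bar{k}$. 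Setting $\zeta_1 := 1 - s_* \mu_0/(2 M_0) \in (0,1)$ this gives $r_j(u^k + s_* \Delta^k) \leq \zeta_1 r_j(u^k)$, and since $\supp(u^k + s_* \Delta^k) \subset \mathcal{A}_{k+1/2}$, the fully corrective step in Algorithm~\ref{alg:PDAPgeneral} yields $r_j(u^{k+1}) \leq r_j(u^k + s_* \Delta^k) \leq \zeta_1 r_j(u^k)$. Iterating from $\bar{k}$ delivers $r_j(u^k) \leq c\, \zeta_1^k$ with $c = \zeta_1^{-\bar{k}} r_j(u^{\bar{k}})$.

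The genuine obstacle, and the heart of the argument, is controlling the quadratic Lipschitz term. A direct Frank-Wolfe bound $\norm{K\Delta^k}_Y \lesssim \mnorm{\Delta^k}$ only gives a term of order $(\cnorm{p^k}-\lambda^k)^0$, which leads to the familiar $\mathcal{O}(1/k)$ rate. The improvement comes from the localization results (Corollary~\ref{coroll:localizationofsupp} and Lemma~\ref{lem:localmin}) enforced by the non-degeneracy Assumption~\ref{ass:strongsource2}: these ensure that all support points inside $\widehat{B}_k$ are within $\mathcal{O}((\cnorm{p^k}-\lambda^k)^{1/2})$ of $\widehat{x}^k$, so the rank-one lumping incurs only an $\mathcal{O}((\cnorm{p^k}-\lambda^k)^{1/2})$ error in $Y$, and the quadratic term acquires an extra factor $(\cnorm{p^k}-\lambda^k)$ that makes it comparable to (rather than dominating) the first-order descent — this is precisely what enables the step size to remain bounded away from zero and hence produces a linear rate.
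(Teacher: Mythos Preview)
Your proposal is correct and follows essentially the same approach as the paper: construct the intermediate iterate $u^k + s\Delta^k$, use the descent lemma for $F$ together with Proposition~\ref{prop:propertiesOfLump} and Lemma~\ref{lem:ObservationLumpingSmall} to bound the decrease by a multiple of $\widehat{\mu}^k(\cnorm{p^k}-\lambda^k)$, then combine the lower bound $\widehat{\mu}^k \geq \tfrac12\min_n\hnorm{\bar{u}(\bar{x}_n)}$ with Lemma~\ref{lem:PsiEstimatePDAP} and transfer to $u^{k+1}$ via the fully corrective step. The only differences are cosmetic: the paper handles the $G$ term via convexity of $G(\mnorm{\cdot})$ and the equality $G(\mnorm{\uklump})=G(\mnorm{u^k})$ from Proposition~\ref{prop:propertiesOfLump} (you use the triangle inequality and monotonicity instead), the paper optimizes the step size over $s\in[0,1]$ to get $s^k=\min\{1,1/(c_1\widehat{\mu}^k)\}$ rather than your fixed $s_*$, and the paper explicitly checks that $u^{k+1/2}\in E_j(u^0)$ (via $j(\uklump)\to j(\bar u)$ and convexity of the sublevel set) before invoking the Lipschitz bound on $\nabla F$ --- a point you implicitly assume.
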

\begin{proof}
For $s \in [0,1]$ and \(\uklump\) from~\eqref{eq:uklump} define 
\[
  u^{k+1/2} \coloneqq u^k + s\Delta^k = u^k + s(\uklump - u^k) = (1-s)u^k + s \uklump.
\]
Recall the definition of the sets~$E_j(u^0)$ and~$KE_j(u^0)$ from~\eqref{def:sublevset} and~\eqref{def:imsublevset}, respectively.
Since~$j(\uklump)\rightarrow j(\bar{u})$ we conclude~$u^{k+1/2}\in E_j(u^0)$ for all~$s$ and all~$k$ large enough. Let in the following \(k\) be big enough.
Using the convexity of \(F\), the Lipschitz continuity of its gradient
from~\eqref{eq:lipgrad} and the linearity of \(K\) and the convexity of \(G(\mnorm{\cdot})\)
we obtain
\begin{align*}
j(u^{k+1/2}) &= F(Ku^{k+1/2}) + G(\mnorm{u^{k+1/2}}) \\
&\leq F(Ku^k) + s \, \inner{\nabla F(Ku^k), K(\uklump - u^k)}_Y
  + \frac{s^2L_{u^0}}{2} \ynorm{K(\uklump - u^k)}^2 + G(\mnorm{u^{k+1/2}}) \\
&\leq j(u^k) + s \left[\pair{-p^k, \uklump - u^k} +
  G(\mnorm{\uklump}) - G(\mnorm{u^k}) \right]
  + \frac{s^2L_{u^0}}{2} \ynorm{K(\uklump - u^k)}^2,
\end{align*}
where~$L_{u^0}$ denotes the Lipschitz constant of~$\nabla F$ on~$KE_j(u^0)$ from Assumption~\ref{ass:regularF}.
Now, by Proposition~\ref{prop:propertiesOfLump} and
Lemma~\ref{lem:ObservationLumpingSmall}, we derive the estimate
\[
j(u^{k+1/2})
\leq j(u^k)
- s \widehat{\mu}^k \left({\cnorm{p^k} - \lambda^k}\right)
+ c_1 \frac{s^2}{2} \left(\widehat{\mu}^k\right)^2 \left(\cnorm{p^k} - \lambda^k\right),
\]
with~$c_1= L_{u^0} c^2$, where \(c\) is the constant from Lemma~\ref{lem:ObservationLumpingSmall}.
Minimizing for \(s \in [0,1]\), we obtain
\[
j(u^{k+1/2})
\leq
j(u^k) - \frac{1}{2}\min\left\{\widehat{\mu}^k,\; 1/c_1\right\} \left(\cnorm{p^k} - \lambda^k\right),
\]
where \(s^k = \min\{\,1,\;1/(c_1\widehat{\mu}^k)\,\}\).
Note that~$\widehat{\mu}^k = \mnorm{u^k\rvert_{\widehat{B}_k}} \geq \hnorm{\bar{u}(\bar{x}_{\,\widehat{n}_k})}/2$ for all~$k$
large enough,
where \(\widehat{n}_k\) is the index of the optimal
support point closest to \(\widehat{x}^k\) as in~\eqref{eq:hat_ball}.
Let \(M_0\) be the bound on the norm of the elements of \(E_j(u^0)\) from
Section~\ref{subsec:worstcase} such that
\(r_j(u^k) \leq \Phi(u^k) \leq M_0 \left(\cnorm{p^k} - \lambda^k\right)\) with Lemma~\ref{lem:PsiEstimatePDAP}.
Defining the constant~$\delta>0$ by
\begin{align*}
\delta 
= (1/(2M_0))\min_{n=1,\dots,N}\min\{\,\hnorm{\bar{u}(\bar{x}_n)}/2,\;1/c_1\,\}
\leq (1/4) \min_{n=1,\dots,N}\hnorm{\bar{u}(\bar{x}_n)}/M_0
\leq 1/4,
\end{align*}
and combining the previous estimates
we have that
\[
j(u^{k+1/2})
\leq j(u^k) - \delta\, M_0 \left(\cnorm{p^k} - \lambda^k\right)
\leq j(u^k) - \delta\, \Phi(u^k)
\leq j(u^k) - \delta\, r_j(u^k).
\]
Subtracting \(j(\bar{u})\) from both sides, it follows that
\begin{align} \label{eq:desc}
r_j(u^{k+1}) \leq r_j(u^{k+1/2}) \leq (1 - \delta)r_j(u^k).
\end{align}
Denote by $\bar{k}$ an index such that all previous results hold for all
$k\geq\bar{k}$. By induction we obtain
\(r_j(u^k) \leq (1-\delta)^{k-\bar{k}} r_j(u^{\bar{k}})\).
Setting $\zeta_1=(1-\delta)$ and $c = r_j(u^{\bar{k}} ) / \zeta_1^{\bar{k}}$ yields the result.
\end{proof}

\subsection{Improved rates for the iterates} \label{subsec:improiter}
This section is devoted to quantitative convergence results for the sequence of iterates~$u^k$.
While norm convergence towards the minimizer~cannot be expected in general, the weak*
convergence of the iterates implies convergence of the support points of~$u^k$ towards
those of~$\bar{u}$ as well as convergence of the coefficient functions.
\subsubsection{Rates for the support points}
In this section we address the linear convergence of~$\mathcal{A}_k = \supp u^k$
towards the support points of~$\bar{u}$. More in detail we prove that
\[
\max_{n =1, \dots, N} \max_{x \in \mathcal{A}_k \cap B_{R}(\bar{x}_n)}
|x -\bar{x}_n|_{\R^d} \leq c\, \zeta_2 ^k,
\]
for some~$\zeta_2 \in (0,1)$ and \(k\) sufficiently large. For this purpose recall that
\[
\max_{x \in \mathcal{A}_k  \cap B_{R}(\bar{x}_n)} |x - \bar{x}_n|_{\R^d}
\leq c \left(|\lambda^k- \bar{\lambda}|^{1/2} +  r_j(u^k)^{1/4}\right)
\]
for \(k\) sufficiently large
according to Lemma~\ref{lem:convestimatesforpoints}.
In view of Theorem~\ref{lem:linearrate} it thus suffices to quantify the convergence
of~$\bar{\lambda}-\lambda^k$ from Lemma~\ref{corr:converofmax} in terms of the residual~$r_j(u^k)$.
\begin{lemma} \label{lemm:convofvalues}
For all $k$ large enough there exists $c>0$ with
\begin{align*}
|\bar{\lambda}-\lambda^k| \leq c\, r_j(u^{k-1})^{1/2}.
\end{align*}
\end{lemma}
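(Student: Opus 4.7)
My plan is to bound $\bar{\lambda} - \lambda^k$ from above and below separately, both at the rate $r_j(u^{k-1})^{1/2}$, by combining the stability of the dual variables provided by Lemma~\ref{lem:estforsatesandadjoint} with the simple algorithmic fact that the previous candidate point $\widehat{x}^{k-1}$ lies in the active set $\mathcal{A}_{k-1/2}$ on which the subproblem defining $u^k$ is solved.

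First I would establish the easy direction, namely the upper bound $\lambda^k - \bar{\lambda} \leq c\, r_j(u^k)^{1/2}$. Since $\mathcal{A}_{k-1/2} \subset \Omega$, it follows from Proposition~\ref{prop:optimalityforsubproblems} that
\[
\lambda^k = \max_{x\in \mathcal{A}_{k-1/2}} P^k(x) \leq \max_{x\in\Omega} P^k(x) = \cnorm{p^k},
\]
and Lemma~\ref{lem:estforsatesandadjoint} then gives $\cnorm{p^k} \leq \bar{\lambda} + \cnorm{p^k-\bar{p}} \leq \bar{\lambda} + c\, r_j(u^k)^{1/2}$. Since Algorithm~\ref{alg:PDAPgeneral} is a descent method (cf.~\eqref{eq:descent}), this is bounded by $\bar{\lambda} + c\, r_j(u^{k-1})^{1/2}$.

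For the lower bound I would exploit the key inclusion $\widehat{x}^{k-1} \in \mathcal{A}_{k-1/2}$, which holds by construction of step~2 in iteration $k-1$ of Algorithm~\ref{alg:PDAPgeneral}. Consequently
\[
\lambda^k \geq P^k(\widehat{x}^{k-1}) \geq P^{k-1}(\widehat{x}^{k-1}) - \cnorm{p^k - p^{k-1}} = \cnorm{p^{k-1}} - \cnorm{p^k - p^{k-1}},
\]
using that $\widehat{x}^{k-1}$ is a global maximizer of $P^{k-1}$ by step~1 of iteration $k-1$. The triangle inequality and Lemma~\ref{lem:estforsatesandadjoint} yield
\[
\cnorm{p^k - p^{k-1}} \leq \cnorm{p^k - \bar{p}} + \cnorm{\bar{p} - p^{k-1}} \leq c\left(r_j(u^k)^{1/2} + r_j(u^{k-1})^{1/2}\right) \leq c\, r_j(u^{k-1})^{1/2},
\]
and similarly $\bar{\lambda} - \cnorm{p^{k-1}} \leq \cnorm{\bar{p}-p^{k-1}} \leq c\, r_j(u^{k-1})^{1/2}$. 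Combining these gives $\bar{\lambda} - \lambda^k \leq c\, r_j(u^{k-1})^{1/2}$, and together with the upper bound the claim follows.

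The only real conceptual step is recognizing that $\widehat{x}^{k-1}$ belongs to the set over which $\lambda^k$ is maximized; once this membership is observed, the estimate is purely mechanical and uses only the continuity properties of $\nabla F$ already recorded in Lemma~\ref{lem:estforsatesandadjoint}. It is precisely this membership that forces the right-hand side to involve $r_j(u^{k-1})$ rather than $r_j(u^k)$: the value $\lambda^k$ is controlled from below by the \emph{previous} dual certificate's norm, which is the best available lower bound at this stage of the algorithm.
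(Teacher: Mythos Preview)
Your proof is correct and in fact more elementary than the paper's. The paper argues the stronger inclusion $\widehat{x}^{k-1}\in\mathcal{A}_k=\supp u^k$ (not merely $\mathcal{A}_{k-1/2}$), which it obtains by contradiction from the \emph{strict} descent $r_j(u^k)<r_j(u^{k-1})$ already established in Theorem~\ref{lem:linearrate}; this yields the equality $P^k(\widehat{x}^{k-1})=\lambda^k$, and then $|\bar\lambda-\lambda^k|$ is estimated in one stroke via the Lipschitz bound $|\bar P(\bar x_{\widehat n_{k-1}})-\bar P(\widehat{x}^{k-1})|$ combined with the position estimate $|\widehat{x}^{k-1}-\bar x_{\widehat n_{k-1}}|\leq c\,r_j(u^{k-1})^{1/2}$ from Lemma~\ref{lem:localmin}. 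Your route sidesteps all three ingredients (strict descent, Lemma~\ref{lem:localmin}, and the $\Cc^{0,1}$ regularity of $\bar P$ on $\Omega_R$): you only need the trivial membership $\widehat{x}^{k-1}\in\mathcal{A}_{k-1/2}$ and the definition $\lambda^k=\max_{x\in\mathcal{A}_{k-1/2}}P^k(x)$ from Proposition~\ref{prop:optimalityforsubproblems} to get $\lambda^k\geq P^k(\widehat{x}^{k-1})$, and then you compare directly to $\cnorm{p^{k-1}}$ via Lemma~\ref{lem:estforsatesandadjoint}. The paper's detour buys the side fact that the newly inserted point actually survives the pruning in step~4, which may be of independent interest; for the lemma's conclusion, however, your argument is cleaner.
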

\begin{proof}
Recall the definition of the dual certificates~$P^k$ and~$\bar{P}$ from~\eqref{def:optquant} and~\eqref{def:itquants}, respectively.
First note that Lemma~\ref{lemm:convofvalues} holds if~$u^k=\bar{u}$ for
some~$k$. Therefore, without restriction assume that~$u^k \neq \bar{u}$ for
all~$k$. Let~$\widehat{x}^{k-1}$ denote the new candidate point determined in
the previous iteration of Algorithm~\ref{alg:PDAPgeneral}. We now claim
that~$\widehat{x}^{k-1}  \in \mathcal{A}_k = \supp u^k $ for all~$k$ large enough. Indeed, if this is
not the case, then we
have~$\mathcal{A}_k \subset \mathcal{A}_{k-1}$ and thus
\begin{align*}
j(u_k)=\min_{\bd{u}\in H^{\# \mathcal{A}_k}} j\left(U_{\mathcal{A}_k}(\bd{u})\right) \geq \min_{\bd{u}\in H^{\# \mathcal{A}_{k-1}}} j\left(U_{\mathcal{A}_{k-1}}(\bd{u})\right)=j(u_{k-1}).
\end{align*}
This gives a
contradiction to~$r_j(u^k) < r_j(u^{k-1})$ for all~$k$ large enough due to~\eqref{eq:desc}.
From~\eqref{def:lambdak_2} and Lemma~\ref{lem:localmin} we thus conclude
\begin{align*}
\widehat{x}^{k-1} \in \Omega_R, \quad P^k(\widehat{x}^{k-1}) = \lambda^k, \quad
  |\widehat{x}^{k-1} - \bar{x}_{\,\widehat{n}_{k-1}}|_{\R^d} \leq c\, r_j(u^{k-1})^{1/2}
\end{align*}
for $\bar{x}_{\,\widehat{n}_{k-1}}$ the support point closest to $\widehat{x}^{k-1}$ as in
as in~\eqref{eq:hat_ball}.
Summarizing the previous observations we finally have
\begin{align*}
\abs*{\bar{\lambda}-\lambda^k}
  &= \abs*{\bar{P}(\bar{x}_{\,\widehat{n}_{k-1}})-P^k(\widehat{x}^{k-1})}
 \leq \abs*{\bar{P}(\bar{x}_{\,\widehat{n}_{k-1}})-\bar{P}(\widehat{x}^{k-1})}+\|\bar{P}-P^k\|_{\mathcal{C}(\Omega)} \\
  &\leq c \left( \norm{\bar{P}}_{\mathcal{C}^{0,1}(\overline{\Omega}_R)}
      \abs{\bar{x}_{\widehat{n}_{k-1}}- \widehat{x}^{k-1}}_{\R^d}+ r_j(u^k)^{1/2} \right)
\leq c\,r_j(u^{k-1})^{1/2},
\end{align*}
due to the monotonicity of $r_j(u^k)$ and Lemma~\ref{lem:estforsatesandadjoint}.
\end{proof}
Combining Lemma~\ref{lem:convestimatesforpoints} and~\ref{lemm:convofvalues}, we obtain the following convergence results for the support points.
\begin{theorem}\label{thm:rateforsupppoints}
Suppose that Assumption~\ref{ass:PDAP}, \ref{ass:strongsource1},
\ref{ass:strongsource2}, and~\ref{ass:regularF} hold and that \(u^k\) is generated by
Algorithm~\ref{alg:PDAPgeneral}.
There exist $c>0$ and  $0< \zeta_2 <1$ such that for all $k$ large enough it holds
\begin{align} \label{rateforsupppoints}
\max_{n =1, \ldots, N} \max_{x \in \mathcal{A}_k \cap B_{R}(\bar{x}_n)}
|x -\bar{x}_n|_{\R^d} \leq c\, \zeta_2 ^k.
\end{align}
\end{theorem}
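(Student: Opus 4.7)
The proof should be essentially a direct chaining of three prior results, so the main work is just bookkeeping the exponents.

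The plan is as follows. First, I would apply Lemma~\ref{lem:convestimatesforpoints}, which for $k$ large enough gives, uniformly in $n=1,\ldots,N$,
\[
\max_{x \in \mathcal{A}_k \cap B_R(\bar{x}_n)} |x - \bar{x}_n|_{\R^d}
\leq c\left(|\bar{\lambda} - \lambda^k|^{1/2} + r_j(u^k)^{1/4}\right).
\]
Next I would substitute the estimate from Lemma~\ref{lemm:convofvalues}, namely $|\bar{\lambda} - \lambda^k| \leq c\, r_j(u^{k-1})^{1/2}$, into the first term on the right-hand side, which upgrades it to $c\, r_j(u^{k-1})^{1/4}$. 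Thus, for $k$ sufficiently large,
\[
\max_{n=1,\ldots,N}\max_{x \in \mathcal{A}_k \cap B_R(\bar{x}_n)} |x - \bar{x}_n|_{\R^d}
\leq c\left(r_j(u^{k-1})^{1/4} + r_j(u^k)^{1/4}\right).
\]

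Finally, I would invoke the linear rate for the residual from Theorem~\ref{lem:linearrate}, which ensures $r_j(u^k) \leq c\,\zeta_1^k$ for some $\zeta_1 \in (0,1)$ and all $k$ large enough. Both terms on the right are therefore bounded by a constant times $\zeta_1^{(k-1)/4}$, and since $\zeta_1 < 1$ this is in turn bounded by $c\,\zeta_1^{k/4}$ after absorbing the single factor $\zeta_1^{-1/4}$ into the constant. Setting $\zeta_2 = \zeta_1^{1/4} \in (0,1)$ yields~\eqref{rateforsupppoints}.

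There is no serious obstacle here, since the localization $\mathcal{A}_k \subset \Omega_{R'}$ and the fact that $\mathcal{A}_k \cap B_R(\bar{x}_n)$ is nonempty (so the inner maxima are well-defined) are already established in Corollary~\ref{coroll:localizationofsupp} and carried through Lemma~\ref{lem:convestimatesforpoints}. The only thing to keep track of is that all the preceding lemmas hold only for $k$ beyond some threshold $\bar{k}$, so the final constant $c$ absorbs finitely many initial iterates; this is a routine ``enlarge $c$'' step.
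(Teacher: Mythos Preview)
Your proposal is correct and matches the paper's proof essentially step for step: the paper also chains Lemma~\ref{lem:convestimatesforpoints}, Lemma~\ref{lemm:convofvalues}, and Theorem~\ref{lem:linearrate}, using monotonicity of $r_j(u^k)$ to absorb $r_j(u^k)^{1/4}$ into $c\,r_j(u^{k-1})^{1/4}$ and then setting $\zeta_2 = \zeta_1^{1/4}$.
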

\begin{proof}
Due to the monotonicity of $r_j(u^k)$, Theorem~\ref{lem:linearrate} and Lemma~\ref{lemm:convofvalues} there exists $0< \zeta_1 <1$ with
\begin{align} \label{combinedestimatedforvalues}
\abs{\lambda^k- \bar{\lambda}}^{1/2} +  r_j(u^k)^{1/4}
 \leq c\, r_j(u^{k-1})^{1/4}
 \leq c\, \zeta_1^{k/4}.
\end{align}
By setting $\zeta_2= \zeta_1^{1/4}$ we deduce~\eqref{rateforsupppoints} by combining
this with the estimate~\eqref{contowardscontsupp} in Lemma~\ref{lem:convestimatesforpoints}.
\end{proof}
\subsubsection{Rates for the coefficients}
Next we address the convergence of the lumped coefficient function
\(u^k(B_{R}(\bar{x}_n))\) introduced in~\eqref{eq:lumped_coefficients}
towards the optimal coefficient $\bar{u}(\bar{x}_n)$.
We will establish the estimate
\begin{align*}
\max_{n=1, \ldots,N} \norm*{\bar{u}(\bar{x}_n)-u^k(B_{R}(\bar{x}_n))}_H \leq c \, \zeta^k_2,
\end{align*}
with~$\zeta_2 \in (0,1)$ as in the previous section.
We start with the following observation.
\begin{lemma} \label{lem:coeffgeneral}
There exists a constant $c>0$ such that, for all~$k$ large enough,
\begin{align*}
  \bhnorm{\bar{u}(\bar{x}_n)- u^k(B_R(\bar{x}_n))}
  \leq  \abs*{\hnorm{\bar{u}(\bar{x}_n)}-\bhnorm{u^k(B_R(\bar{x}_n))}}
  + M_0\,\max_{x \in \mathcal{A}_k \cap B_R(\bar{x}_n)} \bhnorm{\frac{\bar{u}(\bar{x}_n)}{\hnorm{\bar{u}(\bar{x}_n)}}-\frac{u^k(x)}{\hnorm{u^k(x)}}}. 
\end{align*}
\end{lemma}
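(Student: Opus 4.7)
The plan is to decompose the vector error $\bar{u}(\bar{x}_n) - u^k(B_R(\bar{x}_n))$ into a magnitude part (along the optimal direction) and a direction part, each of which is controlled separately. To fix notation, let $\bar{U}_n = \bar{u}(\bar{x}_n)$, $U^k_n = u^k(B_R(\bar{x}_n))$, $\bar{\mu}_n = \hnorm{\bar{U}_n}$, $\mu^k_n = \bhnorm{U^k_n}$, and write the optimal unit direction as $\bar{e}_n = \bar{U}_n/\bar{\mu}_n$, which is well defined thanks to Assumption~\ref{ass:strongsource2} (i.e.\ $\bar{\mu}_n > 0$). By Corollary~\ref{coroll:localizationofsupp}, for $k$ large enough $\mathcal{A}_k \cap B_R(\bar{x}_n)$ is nonempty, so the sparse representation of $u^k$ gives
\[
U^k_n = \sum_{x \in \mathcal{A}_k \cap B_R(\bar{x}_n)} u^k(x) = \sum_{x \in \mathcal{A}_k \cap B_R(\bar{x}_n)} \hnorm{u^k(x)}\,e^k_x, \quad \text{where } e^k_x = \frac{u^k(x)}{\hnorm{u^k(x)}}.
\]

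The central identity I would use is the additive splitting
\[
\bar{U}_n - U^k_n = \bigl(\bar{\mu}_n - \widehat{\mu}^k_n\bigr)\,\bar{e}_n + \sum_{x \in \mathcal{A}_k \cap B_R(\bar{x}_n)} \hnorm{u^k(x)}\,(\bar{e}_n - e^k_x),
\]
where $\widehat{\mu}^k_n \coloneqq \sum_x \hnorm{u^k(x)} = \mnorm{u^k|_{B_R(\bar{x}_n)}}$ is the total-variation mass in the ball. Taking $H$-norms, applying the triangle inequality, and using $\widehat{\mu}^k_n \leq \mnorm{u^k} \leq M_0$ already handles the direction component and yields
\[
\bhnorm{\bar{U}_n - U^k_n} \leq \abs{\bar{\mu}_n - \widehat{\mu}^k_n} + M_0 \max_{x \in \mathcal{A}_k \cap B_R(\bar{x}_n)} \bhnorm{\bar{e}_n - e^k_x}.
\]
It then remains to replace $\widehat{\mu}^k_n$ by $\mu^k_n$ in the magnitude piece. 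For this I use the reverse triangle inequality, together with the same identity: $\widehat{\mu}^k_n - \mu^k_n = \widehat{\mu}^k_n - \bhnorm{U^k_n} \leq \bhnorm{\widehat{\mu}^k_n \bar{e}_n - U^k_n} \leq \widehat{\mu}^k_n \max_x \bhnorm{\bar{e}_n - e^k_x}$, and then bound $\abs{\bar{\mu}_n - \widehat{\mu}^k_n} \leq \abs{\bar{\mu}_n - \mu^k_n} + (\widehat{\mu}^k_n - \mu^k_n)$.

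I do not expect a serious obstacle: the lemma is essentially a finite-dimensional linear-algebra calculation inside $H$, once the representation of $U^k_n$ as a sum of Dirac coefficients is unpacked and the quantities $\widehat{\mu}^k_n$, $\mu^k_n$ are distinguished. The mildly delicate point is that the chain of triangle inequalities above produces a factor of $2M_0$ in front of the max direction difference, whereas the statement has $M_0$; this should either be absorbed by a tighter choice of the bound on $\mnorm{u^k}$, or is an inessential constant that does not affect the subsequent asymptotic rate analysis.
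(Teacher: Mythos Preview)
Your decomposition is exactly the one the paper intends, and your argument is correct. You are in fact more careful than the paper on one point: the paper's displayed identity places $\lVert u^k(B_R(\bar{x}_n))\rVert_H$ in the scalar coefficient, where only your $\widehat{\mu}^k_n = \sum_x\lVert u^k(x)\rVert_H$ makes the equation an honest identity (the two agree only when all Dirac directions in the ball coincide, which is not guaranteed at finite $k$). You keep the two quantities separate and convert afterwards via the reverse triangle inequality, picking up the extra factor of $2$ you flag; for the downstream rate estimate in Theorem~\ref{thm:convergenceofcoefficients} this is immaterial.

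If you want the sharp constant $M_0$, split instead as $\bar{U}_n - U^k_n = (\bar{\mu}_n - \mu^k_n)\,\bar{e}_n + (\mu^k_n\bar{e}_n - U^k_n)$ and exploit that both vectors in the second difference have $H$-norm $\mu^k_n$. Using $1 - (\bar{e}_n, e^k_x)_H = \tfrac{1}{2}\lVert\bar{e}_n - e^k_x\rVert_H^2$ and $\mu^k_n\leq\widehat{\mu}^k_n$ one computes
\[
\lVert \mu^k_n\bar{e}_n - U^k_n\rVert_H^2 = 2\mu^k_n\bigl(\mu^k_n - (\bar{e}_n,U^k_n)_H\bigr) \leq \mu^k_n\sum_x \lVert u^k(x)\rVert_H\,\lVert\bar{e}_n - e^k_x\rVert_H^2 \leq (\widehat{\mu}^k_n D)^2,
\]
where $D$ is the maximal direction defect, which yields the stated bound with $M_0$ directly.
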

\begin{proof}
First recall that~$\mnorm{u^k}=\sum^N_{n=1}\bhnorm{u^k(B_R(\bar{x}_n))}\leq M_0$. The result readily follows from the triangle inequality and
\begin{align*}
&\bar{u}(\bar{x}_n)- u^k(B_R(\bar{x}_n))=\\
&\left(\hnorm{\bar{u}(\bar{x}_n)}-\bhnorm{u^k(B_R(\bar{x}_n))}\right)\, \frac{\bar{u}(\bar{x}_n)}{\hnorm{\bar{u}(\bar{x}_n)}}+ \sum_{x \in \mathcal{A}_k \cap B_R(\bar{x}_n)}\hnorm{u^k(x)} \bhnorm{\frac{\bar{u}(\bar{x}_n)}{\hnorm{\bar{u}(\bar{x}_n)}}-\frac{u^k(x)}{\hnorm{u^k(x)}}}.
\qedhere
\end{align*} 
\end{proof}
Therefore, in order to establish the desired result, it suffices to quantify the convergence of the norms and the normalized coefficient functions.
We start with the latter one.
\begin{lemma} \label{lem:estimateforsignum}
There exists a $c>0$ such that for all $n$ and $x \in \mathcal{A}_k \cap B_R(\bar{x}_n)$
it holds
\begin{align*}
\bhnorm{\frac{\bar{u}(\bar{x}_n)}{\hnorm{\bar{u}(\bar{x}_n)}}-\frac{u^k(x)}{\hnorm{u^k(x)}}}
  \leq c \, \zeta_2^k.
\end{align*}
\end{lemma}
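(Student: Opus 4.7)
The plan is to exploit the optimality conditions that pin down the normalized coefficient directions in terms of the dual variables, and then to use the already-established convergence rates for $p^k$, $\lambda^k$ and the support points $\mathcal{A}_k$.

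More precisely, by Corollary~\ref{corr:finitesupportedpdap} applied to $\bar{u}$ and by Proposition~\ref{prop:optimalityforsubproblems} applied to $u^k$ (both in the regime where $\bar{\lambda}, \lambda^k > 0$, which holds for $k$ large enough by Corollary~\ref{corr.characofiterates}), the two unit vectors of interest admit the representations
\begin{equation*}
\frac{\bar{u}(\bar{x}_n)}{\hnorm{\bar{u}(\bar{x}_n)}} = \frac{\bar{p}(\bar{x}_n)}{\bar{\lambda}}, \qquad \frac{u^k(x)}{\hnorm{u^k(x)}} = \frac{p^k(x)}{\lambda^k},
\end{equation*}
for every $x \in \mathcal{A}_k$. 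Thus the quantity to bound is $\|\bar{p}(\bar{x}_n)/\bar{\lambda} - p^k(x)/\lambda^k\|_H$, which I would split via triangle inequality into
\begin{equation*}
\frac{1}{\bar{\lambda}} \hnorm{\bar{p}(\bar{x}_n) - p^k(x)} + \hnorm{p^k(x)} \cdot \frac{|\lambda^k - \bar{\lambda}|}{\bar{\lambda}\,\lambda^k}.
\end{equation*}

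For the first term I would further split $\bar{p}(\bar{x}_n) - p^k(x) = (\bar{p}(\bar{x}_n) - \bar{p}(x)) + (\bar{p}(x) - p^k(x))$. The first piece is controlled by the Lipschitz (in fact $\Cc^2$) regularity of $\bar{p}$ on $\bar{\Omega}_R$ times $|x - \bar{x}_n|_{\R^d}$, and Theorem~\ref{thm:rateforsupppoints} gives $|x - \bar{x}_n|_{\R^d} \leq c\,\zeta_2^k$. The second piece is bounded by $\cnorm{p^k - \bar{p}} \leq c\, r_j(u^k)^{1/2}$ thanks to Lemma~\ref{lem:estforsatesandadjoint}, and by Theorem~\ref{lem:linearrate} this is of order $\zeta_1^{k/2}$, which (since $\zeta_2 = \zeta_1^{1/4}$) is $\zeta_2^{2k}$ and thus dominated by $\zeta_2^k$. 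For the second (multiplicative) term I would use that $\lambda^k \to \bar{\lambda} > 0$ so the prefactor is uniformly bounded, together with Lemma~\ref{lemm:convofvalues} giving $|\lambda^k - \bar{\lambda}| \leq c\, r_j(u^{k-1})^{1/2} \leq c\,\zeta_1^{(k-1)/2}$, which is again of higher order than $\zeta_2^k$.

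Collecting the three contributions yields the claimed bound with constant independent of $n$ and of $x \in \mathcal{A}_k \cap B_R(\bar{x}_n)$. No real obstacle appears here: the work has been done upstream in Theorems~\ref{lem:linearrate} and~\ref{thm:rateforsupppoints} and in Lemmas~\ref{lem:estforsatesandadjoint} and~\ref{lemm:convofvalues}, and the only mildly delicate point is to make sure the rate $\zeta_2^k$ coming from the Lipschitz estimate in $|x-\bar{x}_n|_{\R^d}$ really is the bottleneck (it is, because it dominates $\zeta_1^{k/2} = \zeta_2^{2k}$). The result then feeds directly into Lemma~\ref{lem:coeffgeneral} to establish linear convergence of the lumped coefficients.
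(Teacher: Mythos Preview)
Your proposal is correct and follows essentially the same approach as the paper: rewrite both unit directions via the optimality conditions as $\bar{p}(\bar{x}_n)/\bar{\lambda}$ and $p^k(x)/\lambda^k$, then control the difference using the Lipschitz regularity of $\bar{p}$ on $\bar{\Omega}_R$, Lemma~\ref{lem:estforsatesandadjoint}, Lemma~\ref{lemm:convofvalues}, and Theorem~\ref{thm:rateforsupppoints}. Your algebraic splitting (first isolating the numerator difference, then the denominator mismatch) differs cosmetically from the paper's three-term telescoping through $\bar{p}(\bar{x}_n)/\lambda^k$, but the ingredients and rate bookkeeping are the same, and your observation that $\zeta_1^{k/2}=\zeta_2^{2k}$ is of higher order than the bottleneck $\zeta_2^k$ is exactly the point.
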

\begin{proof}
Let $x \in \mathcal{A}_k \cap B_R(\bar{x}_n)$ be arbitrary but fixed. From Corollary~\ref{corr:finitesupportedpdap} and Proposition~\ref{prop:optimalityforsubproblems} we recall that 
\begin{align*}
\bhnorm{\frac{\bar{u}(\bar{x}_n)}{\hnorm{\bar{u}(\bar{x}_n)}}-\frac{u^k(x)}{\hnorm{u^k(x)}}}=\bhnorm{\frac{\bar{p}(\bar{x}_n)}{\bar{\lambda}}-\frac{p^k(x)}{\lambda^k}}.
\end{align*}
Now, the error is split into three parts
\begin{align*}
\bhnorm{\frac{\bar{p}(\bar{x}_n)}{\bar{\lambda}} -\frac{p^k(x)}{\lambda^k}}
 \leq  \bhnorm{\frac{\bar{p}(\bar{x}_n)}{\bar{\lambda}}-\frac{\bar{p}(\bar{x}_n)}{\lambda^k}}
+ \bhnorm{\frac{\bar{p}(\bar{x}_n)}{\lambda^k}-\frac{\bar{p}(x)}{\lambda^k}}
+ \bhnorm{\frac{\bar{p}(x)}{\lambda^k}-\frac{p^k(x)}{\lambda^k}}.
\end{align*}
For the first term we use Lemma~\ref{lemm:convofvalues} to obtain 
\begin{align*}
\bhnorm{\frac{\bar{p}(\bar{x}_n)}{\bar{\lambda}}-\frac{\bar{p}(\bar{x}_n)}{\lambda^k}} \leq \cnorm{\bar{p}} \frac{|\bar{\lambda}-\lambda^k|}{\bar{\lambda} \lambda^k} \leq c\, \zeta_2^k,
\end{align*}
due to~\eqref{combinedestimatedforvalues} and since~$\lambda^k \bar{\lambda}$ is bounded away from zero.
From the Lipschitz continuity of $\bar{p}$ and the uniform convergence of $p^k$ the remaining terms are estimated by
\begin{align*}
\bhnorm{\frac{\bar{p}(\bar{x}_n)}{\lambda^k}-\frac{\bar{p}(x)}{\lambda^k}}+
\bhnorm{\frac{\bar{p}(x)}{\lambda^k}-\frac{p^k(x)}{\lambda^k}}
 \leq \frac{c}{\lambda^k} \left( \abs{\bar{x}_n-x}_{\R^d}+\cnorm{\bar{p}-p^k}\right).
\end{align*}
Using \eqref{rateforsupppoints} and $\cnorm{\bar{p}-p^k}\leq r_j(u^{k})^{1/4}$ from
Lemma~\ref{lem:estforsatesandadjoint}, for all $k$ large enough we obtain
\begin{align*}
\abs{\bar{x}_n-x}_{\R^d} + \cnorm{\bar{p}-p^k} \leq c\, \zeta_2 ^k,
\end{align*}
independent of~$x$ with~\eqref{combinedestimatedforvalues}. Adding both estimates yields the result.
\end{proof}
Next we address the convergence of the norms. For this purpose we require the following auxiliary result.
\begin{lemma} \label{lem:convofsignumsandpoints}
There exists a \(c > 0\) such that for all $n$, $x \in \mathcal{A}_k \cap B_R(\bar{x}_n)$, and \(k\) large enough
it holds
\begin{align*}
\bynorm{K\left( \frac{\bar{u}(\bar{x}_n)}{\hnorm{\bar{u}(\bar{x}_n)}}\delta_{\bar{x}_n}-\frac{u^k(x)}{\hnorm{u^k(x)}}\delta_x \right)} \leq c\,\zeta^k_2.
\end{align*}
\end{lemma}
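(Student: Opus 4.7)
The plan is to recognize that $K(\bd{v} \delta_z) = \kernel(z, \bd{v})$ for a single Dirac mass, and then reduce the statement to the Lipschitz behavior of $\kernel$ in its first argument combined with the linearity in its second argument. By Corollary~\ref{corr:finitesupportedpdap} and Proposition~\ref{prop:optimalityforsubproblems} we have the normalized identifications
\[
\bd{a}_n \coloneqq \frac{\bar{u}(\bar{x}_n)}{\hnorm{\bar{u}(\bar{x}_n)}} = \frac{\bar{p}(\bar{x}_n)}{\bar{\lambda}},
\qquad
\bd{b}^k_x \coloneqq \frac{u^k(x)}{\hnorm{u^k(x)}} = \frac{p^k(x)}{\lambda^k},
\]
which will let us reuse the earlier perturbation estimates for $\bar{p}$, $p^k$, $\bar{\lambda}$, $\lambda^k$.

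First I would split the difference additively by interpolating through $\kernel(x, \bd{a}_n)$:
\[
K\!\left(\bd{a}_n\delta_{\bar{x}_n} - \bd{b}^k_x\delta_x\right)
= \bigl[\kernel(\bar{x}_n, \bd{a}_n) - \kernel(x, \bd{a}_n)\bigr] + \kernel\bigl(x,\,\bd{a}_n - \bd{b}^k_x\bigr),
\]
using linearity of $\kernel$ in the second argument to combine $\kernel(x,\bd{a}_n) - \kernel(x,\bd{b}^k_x)$ into a single evaluation at the difference.

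For the second term, I would invoke that $\norm{\kernel(x,\bd{w})}_Y \leq \norm{\Kstar}_{\mathcal{L}(Y,\Cc(\Omega,H))}\,\hnorm{\bd{w}}$ (which follows directly from the definition of $\Kstar$ and the computation of its operator norm given in Section~\ref{sec:Notation}), and then apply Lemma~\ref{lem:estimateforsignum} to conclude
\[
\bynorm{\kernel(x, \bd{a}_n - \bd{b}^k_x)} \leq \norm{\Kstar}\,\bhnorm{\bd{a}_n - \bd{b}^k_x} \leq c\,\zeta_2^k.
\]
For the first term, I would use the smoothness assumption~\eqref{eq:Ctwo_kernel}: since $\kernel(\cdot,\bd{u}) \in \Cc^2(\bar{\Omega}_R,Y)$ with Hessian bounded uniformly for $\norm{\bd{u}}_H\leq 1$, its gradient is also bounded uniformly on the compact set $\bar{\Omega}_R$, so $\kernel(\cdot,\bd{u})$ is Lipschitz on $\bar{\Omega}_R$ uniformly in $\bd{u}$ with $\hnorm{\bd{u}}\leq 1$. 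Applying this with $\bd{u} = \bd{a}_n$ (which has unit norm) and Theorem~\ref{thm:rateforsupppoints} then gives
\[
\bynorm{\kernel(\bar{x}_n, \bd{a}_n) - \kernel(x, \bd{a}_n)} \leq L\,\abs{\bar{x}_n - x}_{\R^d} \leq c\,\zeta_2^k,
\]
provided $k$ is large enough so that $\mathcal{A}_k \cap B_R(\bar{x}_n) \subset \bar{\Omega}_R$, which is guaranteed by Corollary~\ref{coroll:localizationofsupp}. Summing the two contributions yields the claim. I expect no serious obstacle here: the main work was already done in Lemma~\ref{lem:estimateforsignum} (for the coefficient direction) and Theorem~\ref{thm:rateforsupppoints} (for the support point locations); this lemma essentially packages them together via the triangle inequality applied after the natural split.
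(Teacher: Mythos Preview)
Your argument is correct and rests on exactly the same two ingredients as the paper's proof, namely Theorem~\ref{thm:rateforsupppoints} for the position error and Lemma~\ref{lem:estimateforsignum} for the direction error. The only difference is in how the $Y$-norm is estimated: the paper tests against an arbitrary $\varphi\in Y$, passes to $\Kstar\varphi$, and uses $\norm{\Kstar\varphi}_{\Cc^{0,1}(\bar\Omega_R,H)}\leq c\,\norm{\varphi}_Y$ (as in Lemma~\ref{lem:ObservationLumpingSmall}), whereas you work directly with the Lipschitz continuity of $\kernel(\cdot,\bd a_n)$ on $\bar\Omega_R$; these are dual formulations of the same bound. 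One small point: your claim that the Lipschitz constant of $\kernel(\cdot,\bd u)$ is uniform over the unit ball of $H$ would require a uniform first-derivative bound, which~\eqref{eq:Ctwo_kernel} does not state directly (it controls only the Hessian). This does not matter here, since you only apply it to the finitely many fixed unit vectors $\bd a_1,\dots,\bd a_N$: each $\kernel(\cdot,\bd a_n)\in\Cc^2(\bar\Omega_R,Y)\subset\Cc^{0,1}(\bar\Omega_R,Y)$ carries its own Lipschitz constant $L_n$, and $L=\max_n L_n$ suffices.
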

\begin{proof}
The proof follows similar steps as in~Lemma~\ref{lem:ObservationLumpingSmall}. Fix an index~$n$ and~$x\in\mathcal{A}_k \cap B_R(\bar{x}_n)$. For~$\varphi \in Y$ we obtain
\begin{multline*}
\left(K\left(\frac{\bar{u}(\bar{x}_n)}{\hnorm{\bar{u}(\bar{x}_n)}}\delta_{\bar{x}_n} -
    \frac{u^k(x)}{\hnorm{u^k(x)}}\delta_x\right),\; \varphi\right)_Y
= \pair*{ \Kstar \varphi,\; \frac{\bar{p}(\bar{x}_n)}{\bar{\lambda}}\delta_{\bar{x}_n}-\frac{p^k(x)}{\lambda^k}\delta_x}\\
= \inner*{\left[\Kstar\varphi\right](\bar{x}_n),\; \frac{\bar{p}(\bar{x}_n)}{\bar{\lambda}}}
- \inner*{\left[\Kstar\varphi\right](x),\; \frac{p^k(x)}{\lambda^k}}_H\\
\leq  \norm{\Kstar\varphi}_{\mathcal{C}^{0,1}(\bar{\Omega}_R, H)} \abs{\bar{x}_n - x}_{\R^d}
+ \cnorm{\Kstar\varphi}
\norm*{\frac{\bar{u}(\bar{x}_n)}{\hnorm{\bar{u}(\bar{x}_n)}}-\frac{u^k(x)}{\hnorm{u^k(x)}}}_H
 \leq c \ynorm{\varphi} \zeta^k_2, 
\end{multline*}
for some constant~$c>0$ independent of~$x$ and~$n$;
see Theorem~\ref{thm:rateforsupppoints} and Lemma~\ref{lem:estimateforsignum}.
Since~$\varphi \in Y$ was chosen arbitrarily, the desired statement follows.
\end{proof}
The next statement characterizes the convergence behavior of the norm of the lumped coefficient.
\begin{proposition}\label{prop:locconvofnorms}
Suppose that Assumption~\ref{ass:PDAP}, \ref{ass:strongsource1},
\ref{ass:strongsource2}, and~\ref{ass:regularF} hold and that \(u^k\) is generated by
Algorithm~\ref{alg:PDAPgeneral}.
There exists a constant $c>0$ such that, for all \(k\) large enough,
\begin{align*}
\max_{n =1, \dots,N} \abs*{\hnorm{\bar{u}(\bar{x}_n)}- \hnorm{u^k(B_R(\bar{x}_n))}} \leq c\,\zeta_2^k.
\end{align*}
\end{proposition}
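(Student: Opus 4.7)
The plan is to leverage the quadratic growth condition for the reduced finite-dimensional problem $\mu \mapsto F(\bar{\bd{K}}\mu) + G(\abs{\mu}_{\ell^1})$ stated at the end of Section~\ref{subsec:strongsource}, where $\bar{\bd{K}}$ is the injective linear map from~\eqref{eq:Kmatrix}. Set $\mu^k_n \coloneqq \hnorm{u^k(B_R(\bar{x}_n))}$ and build the trial measure
\begin{equation*}
v^k \coloneqq \sum_{n=1}^N \mu^k_n\,\frac{\bar{p}(\bar{x}_n)}{\bar{\lambda}}\,\delta_{\bar{x}_n},
\end{equation*}
so that $K v^k = \bar{\bd{K}}\mu^k$. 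By construction $\mnorm{v^k} = \sum_n \mu^k_n = \mnorm{u^k}$, hence $G(\mnorm{v^k}) = G(\mnorm{u^k})$, and the whole difference $j(v^k) - j(u^k)$ is carried by the $F$-term.

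First I would estimate $\bynorm{K v^k - K u^k}$ by decomposing
\begin{equation*}
K v^k - K u^k
 = \sum_{n=1}^N \sum_{x \in \mathcal{A}_k \cap B_R(\bar{x}_n)} \hnorm{u^k(x)}\,
    K\!\left(\frac{\bar{p}(\bar{x}_n)}{\bar{\lambda}}\delta_{\bar{x}_n} - \frac{u^k(x)}{\hnorm{u^k(x)}}\delta_{x}\right),
\end{equation*}
applying Lemma~\ref{lem:convofsignumsandpoints} to each inner term, and using $\sum_n \mu^k_n = \mnorm{u^k} \leq M_0$ to obtain $\bynorm{K v^k - K u^k} \leq c\,\zeta_2^k$.

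Second, since $y^k = Ku^k \to \bar{y}$ and $\bynorm{Kv^k - Ku^k} \to 0$, both points lie in a bounded set on which $\nabla F$ is Lipschitz (Assumption~\ref{ass:regularF}), hence $F$ itself is Lipschitz there. Combined with $r_j(u^k) \leq c\,\zeta_1^k \leq c\,\zeta_2^k$ from Theorem~\ref{lem:linearrate} (since $\zeta_1 \leq \zeta_2$), this yields
\begin{equation*}
 j(v^k) - j(\bar{u}) = \bigl[F(K v^k) - F(K u^k)\bigr] + r_j(u^k) \leq c\,\zeta_2^k.
\end{equation*}
Third, the quadratic growth estimate for the reduced functional gives
\begin{equation*}
\theta\,\abs{\mu^k - \bar{\mu}}_{\R^N}^2 \leq F(\bar{\bd{K}}\mu^k) + G(\abs{\mu^k}_{\ell^1}) - F(\bar{\bd{K}}\bar{\mu}) - G(\abs{\bar{\mu}}_{\ell^1}) = j(v^k) - j(\bar{u}) \leq c\,\zeta_2^k,
\end{equation*}
where applicability is guaranteed because $\mu^k \geq 0$, and under Assumption~\ref{ass:strongsource2} the coefficients $\bar{\mu}_n = \hnorm{\bar{\bd{u}}_n}$ are strictly positive so $\bar{\mu}$ is the unique minimizer and the strong convexity hypothesis on $F$ from Assumption~\ref{ass:regularF} is in force near $\bar{y}$. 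Taking square roots gives $\abs{\mu^k_n - \bar{\mu}_n} \leq c\,\zeta_2^{k/2}$, and after absorbing the square root by replacing $\zeta_2$ with $\zeta_2^{1/2}$ (which is still in $(0,1)$; the global $\zeta_2$ in Theorem~\ref{thm:fastconvergencepdapdisclaimer} is taken as the slowest rate arising), this yields the claim since $\hnorm{\bar{u}(\bar{x}_n)} = \bar{\mu}_n$ and $\hnorm{u^k(B_R(\bar{x}_n))} = \mu^k_n$.

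The main obstacle is the book-keeping around the quadratic growth constant $\theta$, in particular justifying that the hypotheses for the reduced-problem quadratic growth (strong convexity of $F$ on the image of $\bar{\bd{K}}$, strict positivity of $\bar{\mu}$, and full column rank of $\bar{\bd{K}}$) are all available; the first follows from Assumption~\ref{ass:regularF} together with $K v^k \to \bar{y}$, the second from Assumption~\ref{ass:strongsource2}, and the third from~\eqref{eq:linindset}. The only other subtlety is that the square-root loss forces a redefinition of the rate constant $\zeta_2$, which is harmless because Theorem~\ref{thm:fastconvergencepdapdisclaimer} permits any $\zeta \in (0,1)$ dominating all the individual rates.
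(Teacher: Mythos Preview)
There is a recurring algebraic slip: you conflate $\mu^k_n = \hnorm{u^k(B_R(\bar{x}_n))}$, the norm of the \emph{sum} of coefficients in the $n$-th ball, with $\tilde{\mu}^k_n \coloneqq \sum_{x \in \mathcal{A}_k \cap B_R(\bar{x}_n)} \hnorm{u^k(x)}$, the sum of the norms. In general only $\mu^k_n \leq \tilde{\mu}^k_n$ holds. Consequently your equality $\mnorm{v^k} = \mnorm{u^k}$ fails (the left side is $\sum_n \mu^k_n$, the right is $\sum_n \tilde{\mu}^k_n$), and your displayed decomposition of $Kv^k - Ku^k$ fails too: its right-hand side equals $\bar{\bd{K}}\tilde{\mu}^k - Ku^k$, not $\bar{\bd{K}}\mu^k - Ku^k$. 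Both are repairable---monotonicity of $G$ gives $G(\mnorm{v^k}) \leq G(\mnorm{u^k})$, which is all the upper bound on $j(v^k)$ needs; and from Lemma~\ref{lem:estimateforsignum} all normalized coefficients in the same ball are $O(\zeta_2^k)$-close to a common unit vector, whence $|\tilde{\mu}^k_n - \mu^k_n| \leq c\,\zeta_2^k$ and the missing term $\bar{\bd{K}}(\tilde{\mu}^k - \mu^k)$ is absorbed.

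More substantively, the paper takes a different and sharper route: rather than invoking the quadratic growth of the reduced functional, it uses directly that $\bar{\bd{K}}$ is injective on $\R^N$ (hence has closed range), so $\abs{\bar{\mu} - \mu^k}_{\R^N} \leq c\,\bynorm{\bar{\bd{K}}(\bar{\mu} - \mu^k)}$. It then bounds the right-hand side by $\bynorm{K(\bar{u} - u^k)} + c\,\zeta_2^k$ via the same correction terms from Lemma~\ref{lem:convofsignumsandpoints}, and finishes with $\bynorm{K(\bar{u} - u^k)} \leq c\,r_j(u^k)^{1/2} \leq c\,\zeta_1^{k/2} = c\,\zeta_2^{2k}$ (recall $\zeta_2 = \zeta_1^{1/4}$ from the proof of Theorem~\ref{thm:rateforsupppoints}). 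This yields the stated rate $\zeta_2^k$. Your path through quadratic growth gives only $\abs{\mu^k - \bar{\mu}}^2 \leq c\,\zeta_2^k$, hence $\abs{\mu^k - \bar{\mu}} \leq c\,\zeta_2^{k/2}$; since $\zeta_2$ is already fixed, you cannot redefine it here. Your argument therefore proves a valid linear rate adequate for the aggregate Theorem~\ref{thm:fastconvergencepdapdisclaimer}, but not Proposition~\ref{prop:locconvofnorms} as stated.
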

\begin{proof}
Define the vectors $\bar{\mu}, \mu^k \in \R^{N}$ with
$\bar{\mu}_n=\hnorm{\bar{u}(\bar{x}_n)}$ and
$\mu^k_n=\hnorm{u^k(B_R(\bar{x}_n))}$ and recall the definition of the operator \(\bar{\bd{K}}\mu
= \sum_n \kernel(\bar{x}_n, \bar{\bd{u}}_n/\hnorm{\bar{\bd{u}}_n}) \mu_n\) from~\eqref{eq:Kmatrix}, which
is injective due to Assumption~\ref{ass:regularF}.
Thus, by standard arguments, there exists $c>0$ with
\(|\mu|_{\R^N} \leq c \ynorm{\bar{\bd{K}}\mu}\)
for any $\mu \in \R^{N}$.
Using this, we estimate
\begin{align*}
\max_{n =1, \dots,N} \abs*{\hnorm{\bar{u}(\bar{x}_n)}- \hnorm{u^k(B_R(\bar{x}_n))}}
= \max_{n =1, \dots, N} |\bar{\mu}_n-\mu^k_n| 
\leq |\bar{\mu}-\mu^k|_{\R^N} \leq c \bynorm{\bar{\bd{K}}\left( \bar{\mu}-\mu^k\right)}.
\end{align*}
We further estimate
\begin{align*}
 \bynorm{\bar{\bd{K}}\left( \bar{\mu}-\mu^k\right)} 
 & \leq \bynorm{K\left(\bar{u}-u^k\right)}
 + \sum_{n=1}^{N}\sum_{x \in \mathcal{A}_k \cap B_R(\bar{x}_n)} 
    \hnorm{u^k(x)}\bynorm{K \left(\frac{\bar{{u}}(\bar{x}_n)}{\hnorm{\bar{{u}}(\bar{x}_n)}} \delta_{\bar{x}_n} -\frac{u^k(x)}{\hnorm{u^k(x)}}\right)}
\\ &\leq  \bynorm{K\left(\bar{u}-u^k\right)} + c\,\zeta^k_2
\end{align*}
where we use Lemma~\ref{lem:convofsignumsandpoints} and~$\mnorm{u^k}
= \sum_{n=1}^{N}\sum_{x\in \mathcal{A}_k \cap B_R(\bar{x}_n)} \hnorm{u^k(x)}\leq M_0$. 
Using Lemma~\ref{lem:estforsatesandadjoint} we obtain
\[
\bynorm{K\left(\bar{u}-u^k\right)} \leq  r(u^k)^{1/2} \leq c\,\zeta^k_2,
\]
for all $k$ large enough, finishing the proof.
\end{proof}
Summarizing all previous estimates we arrive at the following theorem.
\begin{theorem} \label{thm:convergenceofcoefficients}
There exists a constant $c>0$ such that, for all~$k$ large enough it holds
\begin{align*}
\max_{n =1,\ldots,N} \bhnorm{\bar{u}(\bar{x}_n)- u^k(B_R(\bar{x}_n))} \leq c \, \zeta_2^k .
\end{align*}
\end{theorem}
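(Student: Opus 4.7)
The plan is to simply combine the three preparatory results that have been carefully arranged in the preceding subsection. Specifically, I will apply Lemma~\ref{lem:coeffgeneral} to decompose the coefficient error into two parts: a norm-error term \(\abs{\hnorm{\bar{u}(\bar{x}_n)}-\bhnorm{u^k(B_R(\bar{x}_n))}}\), and a direction-error term involving the maximum over \(x\in\mathcal{A}_k\cap B_R(\bar{x}_n)\) of \(\bhnorm{\bar{u}(\bar{x}_n)/\hnorm{\bar{u}(\bar{x}_n)}-u^k(x)/\hnorm{u^k(x)}}\). Since Lemma~\ref{lem:coeffgeneral} is purely algebraic (based on the triangle inequality and the fact that \(\mnorm{u^k}\leq M_0\)), no further work is needed at this step beyond quoting it.

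Next, I would estimate each of the two terms separately. For the direction-error term, I invoke Lemma~\ref{lem:estimateforsignum}, which provides the uniform bound \(c\,\zeta_2^k\) over all \(x \in \mathcal{A}_k \cap B_R(\bar{x}_n)\) and all \(n\); taking the maximum over such \(x\) and multiplying by the constant \(M_0\) keeps this term of order \(\zeta_2^k\). For the norm-error term, I apply Proposition~\ref{prop:locconvofnorms}, which already provides the desired \(c\,\zeta_2^k\) bound, again uniformly in \(n\).

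Finally, adding the two estimates and taking the maximum over \(n=1,\dots,N\) yields the claim, with a new constant \(c>0\) that absorbs both contributions. The result only holds for \(k\) large enough, since all three invoked results are themselves valid only asymptotically (the support localization from Corollary~\ref{coroll:localizationofsupp} needs \(\mathcal{A}_k\subset\Omega_R\) and \(\mathcal{A}_k\cap B_R(\bar{x}_n)\neq\emptyset\), while the rates require \(u^k\) to be in the linear convergence regime from Theorem~\ref{lem:linearrate}).

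There is no main obstacle here: all the hard work has already been done. The nontrivial estimates were the localization of the support points in Theorem~\ref{thm:rateforsupppoints}, the quantitative control of the signum via the dual variable in Lemma~\ref{lem:estimateforsignum}, and the injectivity-based argument for the lumped norms in Proposition~\ref{prop:locconvofnorms} (which hinged on Assumption~\ref{ass:strongsource1} through the full column rank of \(\bar{\bd{K}}\) and the auxiliary observation estimate from Lemma~\ref{lem:convofsignumsandpoints}). The present theorem is therefore a short combination step, essentially a corollary packaging the previous two bounds into a single linear convergence statement for the lumped coefficient error.
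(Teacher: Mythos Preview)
Your proposal is correct and matches the paper's approach exactly: the paper's proof is a single sentence citing Lemma~\ref{lem:coeffgeneral}, Lemma~\ref{lem:estimateforsignum}, and Proposition~\ref{prop:locconvofnorms}, which is precisely the decomposition-and-combine argument you describe.
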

\begin{proof}
This follows with Lemma~\ref{lem:coeffgeneral} and the estimates in
Lemma~\ref{lem:estimateforsignum} and Proposition~\ref{prop:locconvofnorms}.
\end{proof}

\subsubsection{Convergence rates in weaker norms}
\label{sec:conv_dual_norm}
As already pointed out the norm convergence of~$u^k$ towards the unique
minimizer~$\bar{u}$ in~$\M(\Omega,H)$ cannot be expected in general. However norm
convergence results can still be obtained by resorting to weaker spaces. In particular
since the space of Lipschitz continuous functions embeds compactly into~$\Cc(\Omega, H)$
weak* convergence on~$\M(\Omega,H)$ implies strong convergence with respect to the
canonical norm
on the topological dual space of~$\Cc^{0,1}(\Omega,H)$.
To this end we point out that
\begin{align*}
\norm{u}_{\Cc^{0,1}(\Omega,H)^*} =
  \sup_{\norm{\varphi}_{\Cc^{0,1}(\Omega,H)}\leq 1} \pair{ \varphi , u }, 
\end{align*}
for all~$u \in \M(\Omega,H)$.
This closely relates the considered dual norm to the Wasserstein-1 distance for
probability measures~\cite{gibbs}, and the Kantorovich-Rubinshtein norm for
scalar-valued measures~\cite{bogachev}.
\begin{theorem}
\label{thm:convindualpdap}
There exists a constant~$c>0$ such that, for all \(k\) large enough,
\begin{align*} 
\norm{u^k- \bar{u}}_{\Cc^{0,1}(\Omega,H)^*} \leq c \, \zeta_2^k.
\end{align*}
\end{theorem}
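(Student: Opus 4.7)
The plan is to exploit the dual norm characterization displayed just before the theorem, $\norm{u^k - \bar{u}}_{\Cc^{0,1}(\Omega,H)^*} = \sup_{\norm{\varphi}_{\Cc^{0,1}} \leq 1} \pair{\varphi, u^k - \bar{u}}$, and decompose the action of $u^k - \bar{u}$ on a test function $\varphi$ according to the balls $B_R(\bar{x}_n)$ that localize both measures. The two linear rates already proved—Theorem~\ref{thm:rateforsupppoints} for the support points and Theorem~\ref{thm:convergenceofcoefficients} for the lumped coefficients—should then be sufficient to conclude, together with the $\Cc^{0,1}$ regularity of $\varphi$.

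More concretely, fix $k$ large enough that all previous localization results hold, so that $\supp u^k \subset \Omega_R = \bigcup_{n=1}^N B_R(\bar{x}_n)$ with pairwise disjoint balls. For any $\varphi \in \Cc^{0,1}(\Omega,H)$ with $\norm{\varphi}_{\Cc^{0,1}(\Omega,H)} \leq 1$, I would write
\begin{align*}
\pair{\varphi, u^k - \bar{u}}
 = \sum_{n=1}^N \left[\sum_{x_i \in \mathcal{A}_k \cap B_R(\bar{x}_n)} (\varphi(x_i), u^k(x_i))_H - (\varphi(\bar{x}_n), \bar{u}(\bar{x}_n))_H\right],
\end{align*}
and then add and subtract $(\varphi(\bar{x}_n), u^k(B_R(\bar{x}_n)))_H$ within each bracket to split each $n$-term as
\begin{align*}
\sum_{x_i \in \mathcal{A}_k \cap B_R(\bar{x}_n)} (\varphi(x_i) - \varphi(\bar{x}_n), u^k(x_i))_H
 + (\varphi(\bar{x}_n), u^k(B_R(\bar{x}_n)) - \bar{u}(\bar{x}_n))_H.
\end{align*}

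The first piece is controlled using the Lipschitz bound $\norm{\varphi(x_i) - \varphi(\bar{x}_n)}_H \leq |x_i - \bar{x}_n|_{\R^d}$, giving the estimate
\begin{align*}
\left|\sum_{x_i \in \mathcal{A}_k \cap B_R(\bar{x}_n)} (\varphi(x_i) - \varphi(\bar{x}_n), u^k(x_i))_H\right|
 \leq \max_{x_i \in \mathcal{A}_k \cap B_R(\bar{x}_n)} |x_i - \bar{x}_n|_{\R^d} \cdot \mnorm{u^k\rvert_{B_R(\bar{x}_n)}},
\end{align*}
which by Theorem~\ref{thm:rateforsupppoints} and $\mnorm{u^k} \leq M_0$ is bounded by $c\, \zeta_2^k$. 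The second piece is bounded using $\cnorm{\varphi} \leq 1$ and Theorem~\ref{thm:convergenceofcoefficients} as
\begin{align*}
|(\varphi(\bar{x}_n), u^k(B_R(\bar{x}_n)) - \bar{u}(\bar{x}_n))_H|
 \leq \hnorm{u^k(B_R(\bar{x}_n)) - \bar{u}(\bar{x}_n)} \leq c\,\zeta_2^k.
\end{align*}
Summing these two estimates over the finitely many indices $n = 1, \ldots, N$ and taking the supremum over $\varphi$ yields the claim, with a new constant $c$.

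I do not foresee a real obstacle: the decomposition is the one naturally suggested by the splitting used throughout subsection~\ref{subsec:improiter}, and both ingredients—linear rates for support points and for lumped coefficients—have already been established. The only mild care is to ensure that the triangle-type splitting respects the disjointness of the balls $B_R(\bar{x}_n)$ (which holds by~\eqref{separation}) so that the full mass of $u^k$ is accounted for exactly once, and to use the fact that $\mathcal{A}_k \subset \Omega_R$ for large $k$, so no terms are lost outside the balls.
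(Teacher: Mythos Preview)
The proposal is correct and follows essentially the same approach as the paper: the paper likewise fixes $\varphi$ with $\norm{\varphi}_{\Cc^{0,1}(\Omega,H)}\leq 1$, restricts to the balls $B_R(\bar{x}_n)$, adds and subtracts $(\varphi(\bar{x}_n),\,u^k(B_R(\bar{x}_n)))_H$, and bounds the two resulting pieces via Theorem~\ref{thm:convergenceofcoefficients} and Theorem~\ref{thm:rateforsupppoints} (together with the Lipschitz bound on $\varphi$ and $\mnorm{u^k}\leq M_0$). The only cosmetic difference is that the paper writes the pairing as integrals over $B_R(\bar{x}_n)$ rather than as finite sums over $\mathcal{A}_k\cap B_R(\bar{x}_n)$.
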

\begin{proof}
Let~$\varphi \in \Cc^{0,1}(\Omega, H)$ with $\|\varphi\|_{\Cc^{0,1}(\Omega,H)} \leq 1$ be given. We estimate
\begin{align*}
|\langle \varphi, u^k- \bar{u} \rangle| \leq \sum^N_{n=1}
\left| \int_{B_R(\bar{x}_n)} \varphi(x) \de \bar{u}(x)  -
  \int_{B_R(\bar{x}_n)} \varphi(x) \de u^k(x) \right|.
\end{align*}
Fix an arbitrary index~$n$ and split the error on the right hand side of the last inequality as
\begin{align*}
 &\abs*{\int_{B_R(\bar{x}_n)} \varphi(x) \de \bar{u}(x) -
   \int_{B_R(\bar{x}_n)} \varphi(x) \de u^k(x)} \\
 &= \abs*{\left(\varphi(\bar{x}_n),\, \bar{u}(\bar{x}_n) - u^k(B_R(\bar{x}_n))\right)_H}
 + \left|\int_{B_R(\bar{x}_n)}\varphi(\bar{x}_n) -\varphi(x) \de u^k(x) \right|
\end{align*}
The first term is bounded by
\begin{align*}
\abs*{\left(\varphi(\bar{x}_n),\, \bar{u}(\bar{x}_n) - u^k(B_R(\bar{x}_n))\right)_H}
  \leq \hnorm{\varphi(\bar{x}_n)} \hnorm{\bar{u}(\bar{x}_n)-u^k(B_R(\bar{x}_n))}
 \leq c\, \|\varphi\|_{\Cc^{0,1}(\Omega,H)} \zeta_2^k
\end{align*}
for some constant~$c>0$ independent of~$n$ following Theorem~\ref{thm:convergenceofcoefficients}. For the second term we use the Lipschitz continuity of~$\varphi$ to obtain
\begin{align*} 
   \left|\int_{B_R(\bar{x}_n)}\varphi(\bar{x}_n) -\varphi(x) \de u^k(x) \right|
\leq 
\sup_{x\in \Omega_R} \frac{\hnorm{\varphi(x) - \varphi(\bar{x}_n)}}{|x-\bar{x}_n|_{\R^d}}
 \max_{x \in \mathcal{A}_k \cap
  B_R(\bar{x}_n) } |x-\bar{x}_n|_{\R^d} \mnorm{u^k\rvert_{B_R(\bar{x}_n)}}  \leq
c\,\zeta_2^k,
\end{align*}
using \(\norm{\varphi}_{\Cc^{0,1}(\Omega,H)} \leq 1\) and the convergence results for the
support points in Theorem~\ref{thm:rateforsupppoints}. Again, the constant~$c>0$ can be
chosen independent of the index~$n$. Combining all previous observations we conclude
\[
  \abs{\pair{\varphi, u^k- \bar{u}}}  \leq c\,\zeta_2^k.
\]
Taking the supremum over all $\varphi \in \Cc^{0,1}(\Omega,H)$ with
$\|\varphi\|_{\Cc^{0,1}(\Omega,H)}\leq 1$ yields the claim.
\end{proof}

\section{Numerical experiments}
\label{sec:numerics}

In order to illustrate the theoretical results, we perform tests on a simple example
with \(\Omega = [-1,1] \subset \R\). We consider the vector valued case with \(H = \C^2 \cong \R^4\).
Motivated by the task of inverse sound source location, we consider the convolution kernel
\[
g_\kappa(\xi) = \frac{\exp(\mathrm{i}\,\kappa\,r(\xi))}{r(\xi)}
\quad\text{with } r(\xi) = \sqrt{\xi^2 + D^2},
\]
corresponding to the
fundamental solutions of the three dimensional free-space Helmholtz equation at wave
number \(\kappa\) evaluated at distance \(D = 1/2\).

For testing purposes, we consider an exact source \(u^\star \in \M(\Omega,H)\) consisting
of three Dirac delta functions and observe the solutions of the Helmholtz equation
with wave numbers \(\kappa_1 = 4 \pi\) and \(\kappa_2 = 6\pi\)
at several points \(y_m \in [-1,1]\), \(m=1,\ldots,M\).
Choosing the kernel
\[
\kernel(x,\bd{u})
 = \left(\begin{matrix}g_{\kappa_1} (x - y_m) \bd{u}_1 \\ g_{\kappa_2} (x - y_m) \bd{u}_2\end{matrix}\right)_{m = 1,\ldots,M}
 \in Y = \C^{2M} \cong \R^{4M},
\]
the corresponding integral operator \(K\) from~\eqref{eq:convolution} describes these
observations. Then we consider observations of this source perturbed by additive Gaussian noise 
\(y_d = K u^\star + w\), with relative noise level of \(10\%\).
The exact source and the observations are visualized in Figure~\ref{fig:ex_sol}.
\begin{figure}[htb]
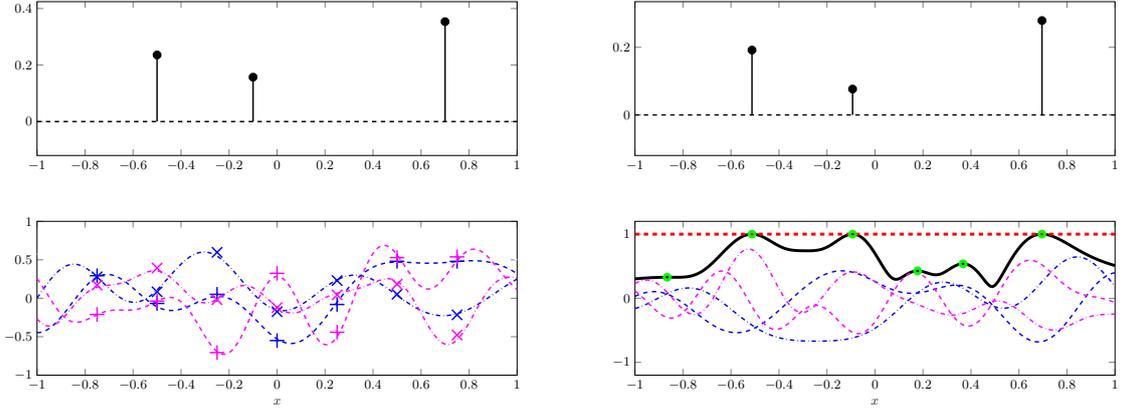

\begin{subfigure}[t]{.45\linewidth}
\centering
\scalebox{.55}{
\input{figures/exact_source}
}
\caption{Exact source \(u^\star\), corresponding convolutions \(g_\kappa \ast u^\star\) (colors
  indicate frequency; real part is dashed, imaginary part is dashed/dotted), and noisy
  observations at seven points.}
\label{fig:ex_sol}
\end{subfigure}
\quad\quad
\begin{subfigure}[t]{.45\linewidth}
\centering
\scalebox{.55}{
\input{figures/recovered_source}
}
\caption{Recovered solution \(\bar{u}\) for \(\beta = 1\),
  corresponding dual variable \(\bar{p}\) (colors
  indicate frequency; real part is dashed, imaginary part is dashed/dotted), and magnitude
  \(\bar{P}\) with corresponding local maxima.}
\label{fig:recovered_source}
\end{subfigure}
\caption{Test problem data and recovered solution.}
\end{figure}
To recover the source from these measurements, we solve the convex regularized source
location problem~\eqref{eq:Psource}
with \(\beta = 1\).
Since the analytical solution \(\bar{u}\) is unknown, we compute a reference solution up to high
tolerance by employing Algorithm~\ref{alg:PDAPgeneral} with
\(\mathrm{TOL}=10^{-13}\). Additionally, to obtain the correct number of Dirac delta
functions \(N=3\) we post-process the final
iterate along the lines of~\eqref{eq:posprocessed_iterate} by combining sources where
the location points differs by less than \(R=10^{-5}\).
We depict this approximation to the optimal solution in
Figure~\ref{fig:recovered_source}, together with the dual variable
\(\bar{p} = - \Kstar(K\bar{u} - y_d)\) and the absolute value
\(\bar{P}(x) = \norm{\bar{p}(x)}_H\). It is evident that the strong sufficient
conditions from Assumption~\ref{ass:strongsource2}
are fulfilled, validating the post-processing described above.
Also, we numerically compute the condition number
of the matrix~\eqref{eq:Kmatrix} as~\(\operatorname{cond}(\bar{\bd{K}}) \approx 1.44\),
providing numerical evidence for Assumption~\ref{ass:strongsource1}.

We compare Algorithm~\ref{alg:PDAPgeneral} to different versions of the accelerated GCG method
Algorithm~\ref{alg:GCGmeasgeneral}, in order to
study the influence of the optional coefficient minimization step 4. We consider:
\begin{itemize}[labelwidth=6em,leftmargin=!]
\item[\textrm{GCG}]
  For plain GCG, we omit step 4.\ in Algorithm~\ref{alg:GCGmeasgeneral} and set \(u^{k+1}=u^{k+1/2}\).
\item[\textrm{SPINAT}(\(l\))]
  Here, we adapt the procedure from~\cite{bredies2013inverse}, and perform \(l\geq 1\)
  additional proximal gradient steps for~\eqref{eq:subprobpdap} on the current support to
  obtain \(u^{k+1}\) started at \(u^{k+1/2}\) in step 4.\ of
  Algorithm~\ref{alg:GCGmeasgeneral}. We select the proximal gradient
  stepsize by an Armijo line-search rule.
\item[\textrm{PDAP}]
  We solve the subproblem~\eqref{eq:subprobpdap} arising in step~4.\ to machine precision,
  resulting in Algorithm~\ref{alg:PDAPgeneral}.
\end{itemize}
We briefly discuss the practical implementation aspects. Concerning the computation of the global
maximum~\(\widehat{x}^k\) of \(x\mapsto P^k(x) = \hnorm{p^k(x)}\), we solve a number of
independent local nonlinear optimization problems using a Newton method initialized at 30
uniformly spaced points in \([-1,1]\) and at the existing support points \(\supp
u^k\). From those local maxima we select the point \(\widehat{x}^k\) by a direct search.
For the solution of the
subproblems~\eqref{eq:subprobpdap} in PDAP we employ a semismooth Newton
method (SSN)~\cite{ulbrich2002semismooth,milzarekfilter} with a globalization strategy based on
a line-search on the objective functional~\cite[Section~3.5]{pieper15}. The SSN algorithm
is initialized with the coefficients of the
intermediate iterate \(u^{k+1/2}\) from step 3.\ of Algorithm~\ref{alg:GCGmeasgeneral}.
Due to the superlinear convergence properties, it terminates in a finite number of steps
with the solution \(\bd{u}^{k+1}\) up to machine precision, and also identifies the nonzero coefficients
of \(\bd{u}^{k+1}\) as part of the solution process, which
define the support the new iterate~\(u^{k+1}\).

Now, we run the aforementioned algorithms with a tolerance of \(\mathrm{TOL} = 10^{-12}\) for a maximum of
\(50\) steps. The corresponding functional residuals are given in Figure~\ref{fig:res}.
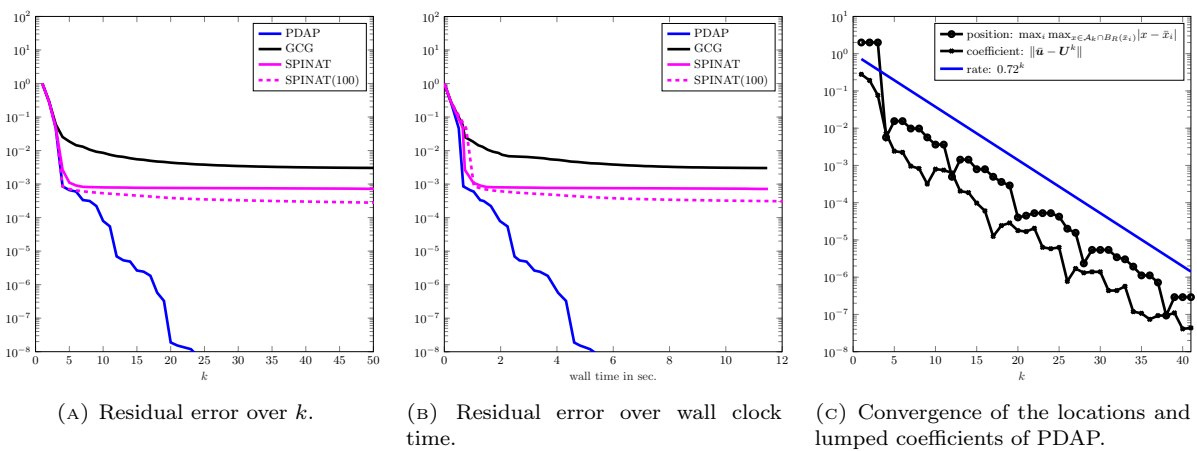
\begin{figure}[htb]
\begin{subfigure}[t]{.31\linewidth}
\centering
\scalebox{.5}{
%
%
\definecolor{mycolor1}{rgb}{1.00000,0.00000,1.00000}%
\begin{tikzpicture}

\begin{axis}[%
width=3.5in,
height=3.5in,
at={(0.758in,0.481in)},
scale only axis,
xmin=0,
xmax=50,
xlabel style={font=\color{white!15!black}},
xlabel={$k$},
ymode=log,
ymin=1e-08,
ymax=100,
yminorticks=true,
axis background/.style={fill=white},
legend style={legend cell align=left, align=left, legend plot pos=left, draw=black}
]
\addplot [color=blue, line width=2.0pt]
  table[row sep=crcr]{%
1	1.00363237120483\\
2	0.289137850583879\\
3	0.0463942869452283\\
4	0.000855674975331455\\
5	0.000659587688155883\\
6	0.000598854578536701\\
7	0.000333050751388342\\
8	0.000312743139651572\\
9	0.000221339996342662\\
10	7.93142065456109e-05\\
11	5.46872974115375e-05\\
12	6.95489904345425e-06\\
13	5.3041958736344e-06\\
14	4.88438718426742e-06\\
15	2.6422349214128e-06\\
16	2.434163818954e-06\\
17	1.83957919663857e-06\\
18	5.76241528049692e-07\\
19	3.28653832259995e-07\\
20	1.88723264793467e-08\\
21	1.52760769589477e-08\\
22	1.34642711424604e-08\\
23	1.18885714606165e-08\\
24	7.38653738174833e-09\\
25	4.75009342970623e-09\\
26	2.56612620042063e-09\\
27	1.23960886000418e-09\\
28	1.20257470648255e-10\\
29	1.01730401880218e-10\\
30	8.51320125505595e-11\\
31	7.70488117751711e-11\\
32	5.15300024872545e-11\\
33	3.28657101533736e-11\\
34	2.22223350831996e-11\\
35	7.91333665262073e-12\\
36	7.10353997845914e-12\\
37	1.33082433961818e-12\\
38	3.8880010322373e-13\\
39	3.37729844090973e-13\\
40	2.93876034618279e-13\\
41	2.77999845366139e-13\\
};
\addlegendentry{PDAP}

\addplot [color=black, line width=2.0pt]
  table[row sep=crcr]{%
1	1.00363237120483\\
2	0.289137850583878\\
3	0.0599121212327516\\
4	0.0253298848346353\\
5	0.018608697827437\\
6	0.0143866920394038\\
7	0.0130264220037906\\
8	0.0106353220704365\\
9	0.00921042621701984\\
10	0.00854971720455999\\
11	0.00750556099215682\\
12	0.00680044936715829\\
13	0.00645580085550579\\
14	0.0058910183444093\\
15	0.00549180593335918\\
16	0.00528858424662726\\
17	0.00494983153194872\\
18	0.00470336872282706\\
19	0.00457467134974288\\
20	0.00435741634715969\\
21	0.00419629731001236\\
22	0.0041106456742509\\
23	0.00396472320691799\\
24	0.00385501779437902\\
25	0.00379592425399666\\
26	0.00369454593862473\\
27	0.00361754857281793\\
28	0.00357565311564467\\
29	0.003503389577029\\
30	0.00344807166369321\\
31	0.0034177325535113\\
32	0.00336517667696801\\
33	0.00332469366746913\\
34	0.00330234862308454\\
35	0.00326350585638013\\
36	0.00323343491927619\\
37	0.00321675004271738\\
38	0.00318766346102151\\
39	0.00316505226926722\\
40	0.00315245196882319\\
41	0.00313043374290878\\
42	0.00311325841182464\\
43	0.00310365243638056\\
44	0.00308683306783175\\
45	0.00307367516038615\\
46	0.00306629338485298\\
47	0.00305334654917822\\
48	0.00304319330472624\\
49	0.00303748218642408\\
50	0.00302745102670132\\
51	0.00301956783855539\\
};
\addlegendentry{GCG}

\addplot [color=mycolor1, line width=2.0pt]
  table[row sep=crcr]{%
1	1.00363237120483\\
2	0.289137850583879\\
3	0.0482386674798602\\
4	0.00266053853032056\\
5	0.00110195554020531\\
6	0.000906046386211168\\
7	0.000825977729215222\\
8	0.000812189089547166\\
9	0.000805098138578542\\
10	0.000796120906408682\\
11	0.000791400706668277\\
12	0.000789040159244303\\
13	0.000786702022346097\\
14	0.000781095290462885\\
15	0.000774666771485744\\
16	0.000771900934882552\\
17	0.000770066771434208\\
18	0.000768721060343092\\
19	0.000766959529643119\\
20	0.000765318710400864\\
21	0.000763570934660507\\
22	0.000761763588575826\\
23	0.000760574513563927\\
24	0.000759367758667695\\
25	0.000758149348393466\\
26	0.00075692048043019\\
27	0.000755679328050651\\
28	0.000754405464521324\\
29	0.000753196677293788\\
30	0.000751943357089946\\
31	0.000750699266602428\\
32	0.000749444654392972\\
33	0.000748173764302362\\
34	0.000746917854510754\\
35	0.000745658105304026\\
36	0.000744418888764864\\
37	0.000743167197692962\\
38	0.00074191885417374\\
39	0.000740700726718724\\
40	0.000739454006186691\\
41	0.000738196459547202\\
42	0.000737000519641984\\
43	0.000735752052224314\\
44	0.000734531717785813\\
45	0.000733290873148773\\
46	0.000732040884362517\\
47	0.000729550062857687\\
48	0.000725398281536394\\
49	0.000721727515125892\\
50	0.000719137242437773\\
51	0.00071783288439331\\
};
\addlegendentry{SPINAT}

\addplot [color=mycolor1, dashed, line width=2.0pt]
  table[row sep=crcr]{%
1	1.00363237120483\\
2	0.289137850583879\\
3	0.0463942869452314\\
4	0.000855674875034018\\
5	0.000708410559020267\\
6	0.000646630726503505\\
7	0.000607211631261872\\
8	0.000578541007359656\\
9	0.000554471850273131\\
10	0.000533516275395662\\
11	0.000517480211118193\\
12	0.00049912991530654\\
13	0.000484699421978707\\
14	0.000469182496936837\\
15	0.000452358413131959\\
16	0.000439098289354067\\
17	0.00042469137881751\\
18	0.000410482228233366\\
19	0.0003975154464394\\
20	0.000383975916335877\\
21	0.000374752027083147\\
22	0.000367210517603578\\
23	0.000360853430354524\\
24	0.000354716451019699\\
25	0.000349559645963282\\
26	0.000344779401342254\\
27	0.000340309626316926\\
28	0.000336363581206034\\
29	0.000332519542538545\\
30	0.000328903371498157\\
31	0.00032541045199741\\
32	0.000322050459172774\\
33	0.000319152063312678\\
34	0.000316144421722897\\
35	0.000313419669249138\\
36	0.00031056755285086\\
37	0.000307896585029499\\
38	0.000305460386970213\\
39	0.000302940104473737\\
40	0.00030066908502091\\
41	0.000298344766626046\\
42	0.00029601010289515\\
43	0.000293709524831098\\
44	0.00029136574934463\\
45	0.000289283078999891\\
46	0.000287130801655211\\
47	0.000285156374153828\\
48	0.000283090394484153\\
49	0.000281026917165694\\
50	0.000279164028578971\\
51	0.000277210138861728\\
};
\addlegendentry{SPINAT(100)}

\end{axis}
\end{tikzpicture}%
}
\caption{Residual error over $k$.}
\label{fig:res}
\end{subfigure}
\quad
\begin{subfigure}[t]{.31\linewidth}
\centering
\scalebox{.5}{
%
%
\definecolor{mycolor1}{rgb}{1.00000,0.00000,1.00000}%
\begin{tikzpicture}

\begin{axis}[%
width=3.5in,
height=3.5in,
at={(0.758in,0.481in)},
scale only axis,
xmin=0,
xmax=12,
xlabel style={font=\color{white!15!black}},
xlabel={wall time in sec.},
ymode=log,
ymin=1e-08,
ymax=100,
yminorticks=true,
axis background/.style={fill=white},
legend style={legend cell align=left, align=left, legend plot pos=left, draw=black}
]
\addplot [color=blue, line width=2.0pt]
  table[row sep=crcr]{%
0	1.00363237120483\\
0.229209184646606	0.289137850583879\\
0.512773990631104	0.0463942869452283\\
0.674416065216064	0.000855674975331455\\
0.918915987014771	0.000659587688155883\\
1.04168105125427	0.000598854578536701\\
1.27252817153931	0.000333050751388342\\
1.40097618103027	0.000312743139651572\\
1.65467810630798	0.000221339996342662\\
1.97444200515747	7.93142065456109e-05\\
2.24580812454224	5.46872974115375e-05\\
2.49362802505493	6.95489904345425e-06\\
2.66986298561096	5.3041958736344e-06\\
2.91901302337646	4.88438718426742e-06\\
3.19708514213562	2.6422349214128e-06\\
3.39564204216003	2.434163818954e-06\\
3.67357611656189	1.83957919663857e-06\\
4.04052305221558	5.76241528049692e-07\\
4.3219530582428	3.28653832259995e-07\\
4.61259603500366	1.88723264793467e-08\\
4.82360506057739	1.52760769589477e-08\\
5.00778007507324	1.34642711424604e-08\\
5.18121314048767	1.18885714606165e-08\\
5.52097010612488	7.38653738174833e-09\\
5.78635501861572	4.75009342970623e-09\\
6.12638711929321	2.56612620042063e-09\\
6.46723604202271	1.23960886000418e-09\\
6.77659106254578	1.20257470648255e-10\\
6.95613408088684	1.01730401880218e-10\\
7.13736414909363	8.51320125505595e-11\\
7.31851100921631	7.70488117751711e-11\\
7.55889916419983	5.15300024872545e-11\\
7.81733798980713	3.28657101533736e-11\\
8.0795681476593	2.22223350831996e-11\\
8.33923101425171	7.91333665262073e-12\\
8.52166604995728	7.10353997845914e-12\\
8.79525899887085	1.33082433961818e-12\\
9.04553508758545	3.8880010322373e-13\\
9.24114418029785	3.37729844090973e-13\\
9.44189214706421	2.93876034618279e-13\\
9.64681220054626	2.77999845366139e-13\\
};
\addlegendentry{PDAP}

\addplot [color=black, line width=2.0pt]
  table[row sep=crcr]{%
0	1.00363237120483\\
0.22858190536499	0.289137850583878\\
0.644999980926514	0.0599121212327516\\
0.729635953903198	0.0253298848346353\\
1.01585388183594	0.018608697827437\\
1.20653390884399	0.0143866920394038\\
1.35664200782776	0.0130264220037906\\
1.53655505180359	0.0106353220704365\\
1.73991203308105	0.00921042621701984\\
1.91899800300598	0.00854971720455999\\
2.0524959564209	0.00750556099215682\\
2.28655385971069	0.00680044936715829\\
2.94582986831665	0.00645580085550579\\
3.54166388511658	0.0058910183444093\\
3.77395486831665	0.00549180593335918\\
4.0560290813446	0.00528858424662726\\
4.32316398620605	0.00494983153194872\\
4.5018949508667	0.00470336872282706\\
4.74602389335632	0.00457467134974288\\
4.99586892127991	0.00435741634715969\\
5.19738698005676	0.00419629731001236\\
5.48888897895813	0.0041106456742509\\
5.7587718963623	0.00396472320691799\\
5.95587301254272	0.00385501779437902\\
6.12713503837585	0.00379592425399666\\
6.35490608215332	0.00369454593862473\\
6.58001899719238	0.00361754857281793\\
6.7659740447998	0.00357565311564467\\
6.98010993003845	0.003503389577029\\
7.10247588157654	0.00344807166369321\\
7.36298203468323	0.0034177325535113\\
7.5550639629364	0.00336517667696801\\
7.74484896659851	0.00332469366746913\\
7.93554091453552	0.00330234862308454\\
8.10597205162048	0.00326350585638013\\
8.28876805305481	0.00323343491927619\\
8.43586087226868	0.00321675004271738\\
8.61835503578186	0.00318766346102151\\
8.80731201171875	0.00316505226926722\\
9.11604785919189	0.00315245196882319\\
9.306401014328	0.00313043374290878\\
9.50699305534363	0.00311325841182464\\
9.68326902389526	0.00310365243638056\\
9.85659694671631	0.00308683306783175\\
10.2032649517059	0.00307367516038615\\
10.3372099399567	0.00306629338485298\\
10.5598449707031	0.00305334654917822\\
10.7587778568268	0.00304319330472624\\
10.9743220806122	0.00303748218642408\\
11.2149648666382	0.00302745102670132\\
11.4704349040985	0.00301956783855539\\
};
\addlegendentry{GCG}

\addplot [color=mycolor1, line width=2.0pt]
  table[row sep=crcr]{%
0	1.00363237120483\\
0.23180103302002	0.289137850583879\\
0.653666019439697	0.0482386674798602\\
0.729163885116577	0.00266053853032056\\
1.03149199485779	0.00110195554020531\\
1.27808904647827	0.000906046386211168\\
1.48490405082703	0.000825977729215222\\
1.63253402709961	0.000812189089547166\\
1.8292829990387	0.000805098138578542\\
2.44142198562622	0.000796120906408682\\
2.6594078540802	0.000791400706668277\\
2.86090588569641	0.000789040159244303\\
3.06162095069885	0.000786702022346097\\
3.32260990142822	0.000781095290462885\\
3.49480891227722	0.000774666771485744\\
3.78041791915894	0.000771900934882552\\
4.04691386222839	0.000770066771434208\\
4.21757292747498	0.000768721060343092\\
4.40989708900452	0.000766959529643119\\
4.62822294235229	0.000765318710400864\\
4.84261989593506	0.000763570934660507\\
5.05839800834656	0.000761763588575826\\
5.32825708389282	0.000760574513563927\\
5.55545592308044	0.000759367758667695\\
5.74427485466003	0.000758149348393466\\
5.98739886283875	0.00075692048043019\\
6.24109601974487	0.000755679328050651\\
6.50242400169373	0.000754405464521324\\
6.79114890098572	0.000753196677293788\\
7.04404807090759	0.000751943357089946\\
7.32112288475037	0.000750699266602428\\
7.52580499649048	0.000749444654392972\\
7.72983694076538	0.000748173764302362\\
7.9351499080658	0.000746917854510754\\
8.14101696014404	0.000745658105304026\\
8.34703493118286	0.000744418888764864\\
8.57366299629211	0.000743167197692962\\
8.80912208557129	0.00074191885417374\\
9.04106593132019	0.000740700726718724\\
9.23976397514343	0.000739454006186691\\
9.44208288192749	0.000738196459547202\\
9.66021800041199	0.000737000519641984\\
9.82595300674438	0.000735752052224314\\
10.0297389030457	0.000734531717785813\\
10.3289968967438	0.000733290873148773\\
10.5323309898376	0.000732040884362517\\
10.7446529865265	0.000729550062857687\\
10.8972609043121	0.000725398281536394\\
11.0525469779968	0.000721727515125892\\
11.2767798900604	0.000719137242437773\\
11.4955399036407	0.00071783288439331\\
};
\addlegendentry{SPINAT}

\addplot [color=mycolor1, dashed, line width=2.0pt]
  table[row sep=crcr]{%
0	1.00363237120483\\
0.229507923126221	0.289137850583879\\
0.774095773696899	0.0463942869452314\\
1.05145597457886	0.000855674875034018\\
1.40851879119873	0.000708410559020267\\
1.72388195991516	0.000646630726503505\\
2.05536985397339	0.000607211631261872\\
2.3531448841095	0.000578541007359656\\
2.6248779296875	0.000554471850273131\\
2.97779989242554	0.000533516275395662\\
3.22516083717346	0.000517480211118193\\
3.57205677032471	0.00049912991530654\\
3.82656097412109	0.000484699421978707\\
4.0916428565979	0.000469182496936837\\
4.34663796424866	0.000452358413131959\\
4.60741782188416	0.000439098289354067\\
4.87281680107117	0.00042469137881751\\
5.19900894165039	0.000410482228233366\\
5.52543878555298	0.0003975154464394\\
5.84979677200317	0.000383975916335877\\
6.16908884048462	0.000374752027083147\\
6.47436499595642	0.000367210517603578\\
6.83748483657837	0.000360853430354524\\
7.18383979797363	0.000354716451019699\\
7.53261399269104	0.000349559645963282\\
7.85872888565063	0.000344779401342254\\
8.18281698226929	0.000340309626316926\\
8.54005885124207	0.000336363581206034\\
8.89160799980164	0.000332519542538545\\
9.23531198501587	0.000328903371498157\\
9.63593697547913	0.00032541045199741\\
10.0267817974091	0.000322050459172774\\
10.4500849246979	0.000319152063312678\\
10.8350348472595	0.000316144421722897\\
11.3103759288788	0.000313419669249138\\
11.7506449222565	0.00031056755285086\\
12.070916891098	0.000307896585029499\\
12.5188829898834	0.000305460386970213\\
12.8908569812775	0.000302940104473737\\
13.2670629024506	0.00030066908502091\\
13.7286539077759	0.000298344766626046\\
14.1046769618988	0.00029601010289515\\
14.5631678104401	0.000293709524831098\\
14.9969539642334	0.00029136574934463\\
15.5371189117432	0.000289283078999891\\
15.9689569473267	0.000287130801655211\\
16.4077949523926	0.000285156374153828\\
16.8214199542999	0.000283090394484153\\
17.3060097694397	0.000281026917165694\\
17.7533187866211	0.000279164028578971\\
18.2231779098511	0.000277210138861728\\
};
\addlegendentry{SPINAT(100)}

\end{axis}
\end{tikzpicture}%
}
\caption{Residual error over wall clock time.}
\label{fig:time}
\end{subfigure}
\quad
\begin{subfigure}[t]{.31\linewidth}
\centering
\scalebox{.5}{
%
%
\begin{tikzpicture}

\begin{axis}[%
width=3.5in,
height=3.5in,
at={(0.758in,0.481in)},
scale only axis,
xmin=0,
xmax=41,
xlabel style={font=\color{white!15!black}},
xlabel={$k$},
ymode=log,
ymin=1e-08,
ymax=10,
yminorticks=true,
axis background/.style={fill=white},
legend style={legend cell align=left, align=left, legend plot pos=left, draw=black}
]
\addplot [color=black, line width=2.0pt, mark=o, mark options={solid, black}]
  table[row sep=crcr]{%
1	2\\
2	2\\
3	2\\
4	0.00565588130695138\\
5	0.0154689485506645\\
6	0.0154689485506645\\
7	0.0097956501791211\\
8	0.0097956501791211\\
9	0.00565588130695138\\
10	0.00362951178438475\\
11	0.00362951178438475\\
12	0.000496003047057436\\
13	0.00142985357542036\\
14	0.00142985357542036\\
15	0.00079140113438092\\
16	0.00079140113438092\\
17	0.000496003047057436\\
18	0.000360627983627029\\
19	0.000290377572058631\\
20	4.02115661393421e-05\\
21	4.49092524687222e-05\\
22	5.24158797392538e-05\\
23	5.24158797392538e-05\\
24	5.24158797392538e-05\\
25	4.21911988397472e-05\\
26	2.00384352283534e-05\\
27	1.55514385531408e-05\\
28	2.34644611360468e-06\\
29	5.4014677103309e-06\\
30	5.4014677103309e-06\\
31	5.4014677103309e-06\\
32	3.43455356298372e-06\\
33	3.01878307362724e-06\\
34	1.92532776144283e-06\\
35	1.11668947444521e-06\\
36	1.11668947444521e-06\\
37	7.21734029859866e-07\\
38	9.36251935979016e-08\\
39	2.91376246797093e-07\\
40	2.91376246797093e-07\\
41	2.91376246797093e-07\\
};
\addlegendentry{position: $\max_i\max_{x\in \mathcal{A}_k \cap B_R(\bar{x}_i)}\abs{x - \bar{x}_i}$}

\addplot [color=black, line width=2.0pt, mark=x, mark options={solid, black}]
  table[row sep=crcr]{%
1	0.279161694893724\\
2	0.191702539214056\\
3	0.0764605169232723\\
4	0.00725302383169925\\
5	0.00241410524457001\\
6	0.00224500611079191\\
7	0.000956731419273857\\
8	0.000827706074239302\\
9	0.000318171196694007\\
10	0.000796419336751176\\
11	0.000748091853313484\\
12	0.000631074773679783\\
13	0.000204568904217804\\
14	0.000186793199327638\\
15	9.7513423527936e-05\\
16	6.03629002230598e-05\\
17	1.25021005624465e-05\\
18	2.42985009487202e-05\\
19	2.86219458279885e-05\\
20	1.78574296865408e-05\\
21	1.68073696552081e-05\\
22	2.0545407261344e-05\\
23	6.33193669799908e-06\\
24	5.82561497734645e-06\\
25	6.2913527624248e-06\\
26	7.77593247530267e-07\\
27	1.70890799337037e-06\\
28	1.31143136306157e-06\\
29	1.39023857660795e-06\\
30	1.39183443922561e-06\\
31	4.36994032920604e-07\\
32	4.38516199456041e-07\\
33	5.68763470449123e-07\\
34	1.18809551603182e-07\\
35	1.07209794492151e-07\\
36	7.37579243620097e-08\\
37	9.32574061399064e-08\\
38	9.35101678420385e-08\\
39	1.10192928601997e-07\\
40	4.08986878554097e-08\\
41	4.34908191904566e-08\\
};
\addlegendentry{coefficient: $\norm{\bar{\bd{u}} - \bd{U}^k}$}

\addplot [color=blue, line width=2.0pt]
  table[row sep=crcr]{%
1	0.72\\
2	0.5184\\
3	0.373248\\
4	0.26873856\\
5	0.1934917632\\
6	0.139314069504\\
7	0.10030613004288\\
8	0.0722204136308736\\
9	0.051998697814229\\
10	0.0374390624262449\\
11	0.0269561249468963\\
12	0.0194084099617653\\
13	0.013974055172471\\
14	0.0100613197241791\\
15	0.00724415020140899\\
16	0.00521578814501447\\
17	0.00375536746441042\\
18	0.0027038645743755\\
19	0.00194678249355036\\
20	0.00140168339535626\\
21	0.00100921204465651\\
22	0.000726632672152685\\
23	0.000523175523949933\\
24	0.000376686377243952\\
25	0.000271214191615645\\
26	0.000195274217963265\\
27	0.000140597436933551\\
28	0.000101230154592156\\
29	7.28857113063526e-05\\
30	5.24777121405739e-05\\
31	3.77839527412132e-05\\
32	2.72044459736735e-05\\
33	1.95872011010449e-05\\
34	1.41027847927523e-05\\
35	1.01540050507817e-05\\
36	7.31088363656281e-06\\
37	5.26383621832523e-06\\
38	3.78996207719416e-06\\
39	2.7287726955798e-06\\
40	1.96471634081745e-06\\
41	1.41459576538857e-06\\
};
\addlegendentry{$\text{rate: } 0.72^k$}

\end{axis}
\end{tikzpicture}%
}
\caption{Convergence of the locations and lumped coefficients of PDAP.}
\label{fig:pos_coeff}
\end{subfigure}
\caption{Convergence metrics of different GCG versions.}
\end{figure}
For GCG, we clearly observe the predicted sublinear convergence rate from
Theorem~\ref{thm:convergenceGCGmeas}, which leads to a slowly
decreasing residual in later iterations, which appears effectively stagnant.
SPINAT achieves a larger reduction in the
residual, but is affected by the same effective stagnation in later iterations. PDAP
initially performs very similar to either of these methods, but converges at a linear rate
after the third step. In particular, it terminates within the tolerance after $41$
steps. This agrees with the convergence result from
Theorem~\ref{thm:fastconvergencepdapdisclaimer}.
Clearly, PDAP yields the best results compared to other methods in every
iteration. Considering that the full resolution of the subproblem~\eqref{eq:subprobpdap}
is more expensive than the simple update of the GCG and SPINAT method, we also plot the
residual over the wall clock time in Figure~\ref{fig:time}. We observe that the added cost
of PDAP does not outweigh the benefits, since, in fact, the computation time in each step
is heavily dominated by the computation of the global nonconvex maximum \(\widehat{x}^k\).
To validate if the improved convergence
estimates for source points and coefficients can be observed in practice, we compute the
maximum error of each
support point of the iterates of GCG to the closest support point of the reference
solution as in Theorem~\ref{thm:rateforsupppoints}. Moreover, we compute the locally lumped
coefficients as given in Theorem~\ref{thm:convergenceofcoefficients} and compute their
maximum error to
the corresponding reference values. As predicted by theory, both quantities converge at a
linear rate, where we empirically estimate \(\zeta_2 \approx 0.72\); see
Figure~\ref{fig:pos_coeff}.

To further assess the properties of the solutions obtained by each method, we plot the
evolution of the support sizes of the computed
iterates in Figure~\ref{fig:supp_size}.
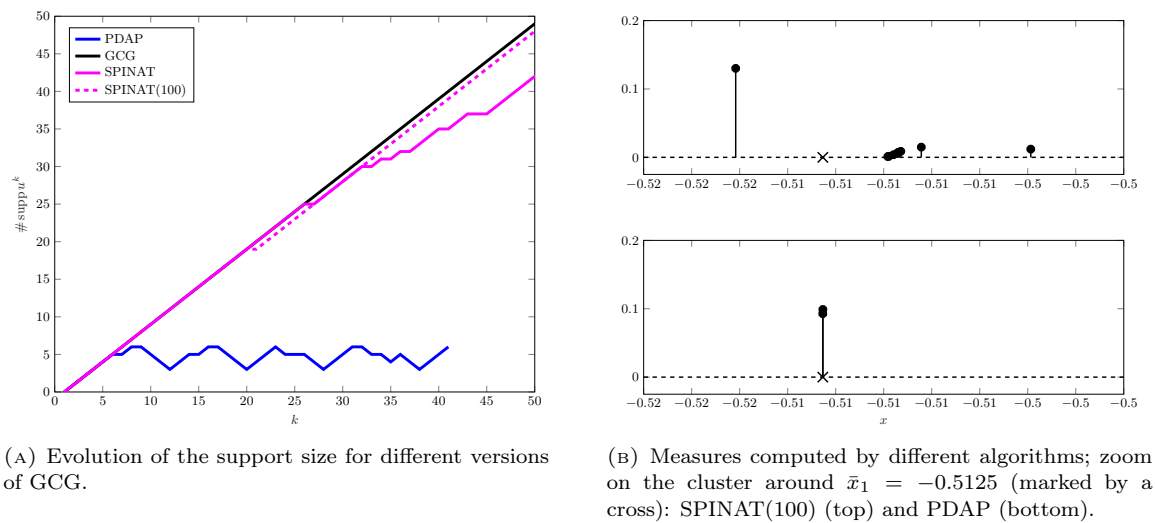
\begin{figure}[htb]
\begin{subfigure}[t]{.45\linewidth}
\centering
\scalebox{.55}{
%
%
\definecolor{mycolor1}{rgb}{1.00000,0.00000,1.00000}%
\begin{tikzpicture}

\begin{axis}[%
width=4.521in,
height=3.566in,
at={(0.758in,0.481in)},
scale only axis,
xmin=0,
xmax=50,
xlabel style={font=\color{white!15!black}},
xlabel={$k$},
ymin=0,
ymax=50,
ylabel style={font=\color{white!15!black}},
ylabel={$\#\supp u^k$},
axis background/.style={fill=white},
legend pos=north west,
legend style={legend cell align=left, align=left, legend plot pos=left, draw=black}
]
\addplot [color=blue, line width=2.0pt]
  table[row sep=crcr]{%
1	0\\
2	1\\
3	2\\
4	3\\
5	4\\
6	5\\
7	5\\
8	6\\
9	6\\
10	5\\
11	4\\
12	3\\
13	4\\
14	5\\
15	5\\
16	6\\
17	6\\
18	5\\
19	4\\
20	3\\
21	4\\
22	5\\
23	6\\
24	5\\
25	5\\
26	5\\
27	4\\
28	3\\
29	4\\
30	5\\
31	6\\
32	6\\
33	5\\
34	5\\
35	4\\
36	5\\
37	4\\
38	3\\
39	4\\
40	5\\
41	6\\
};
\addlegendentry{PDAP}

\addplot [color=black, line width=2.0pt]
  table[row sep=crcr]{%
1	0\\
2	1\\
3	2\\
4	3\\
5	4\\
6	5\\
7	6\\
8	7\\
9	8\\
10	9\\
11	10\\
12	11\\
13	12\\
14	13\\
15	14\\
16	15\\
17	16\\
18	17\\
19	18\\
20	19\\
21	20\\
22	21\\
23	22\\
24	23\\
25	24\\
26	25\\
27	26\\
28	27\\
29	28\\
30	29\\
31	30\\
32	31\\
33	32\\
34	33\\
35	34\\
36	35\\
37	36\\
38	37\\
39	38\\
40	39\\
41	40\\
42	41\\
43	42\\
44	43\\
45	44\\
46	45\\
47	46\\
48	47\\
49	48\\
50	49\\
51	50\\
};
\addlegendentry{GCG}

\addplot [color=mycolor1, line width=2.0pt]
  table[row sep=crcr]{%
1	0\\
2	1\\
3	2\\
4	3\\
5	4\\
6	5\\
7	6\\
8	7\\
9	8\\
10	9\\
11	10\\
12	11\\
13	12\\
14	13\\
15	14\\
16	15\\
17	16\\
18	17\\
19	18\\
20	19\\
21	20\\
22	21\\
23	22\\
24	23\\
25	24\\
26	25\\
27	25\\
28	26\\
29	27\\
30	28\\
31	29\\
32	30\\
33	30\\
34	31\\
35	31\\
36	32\\
37	32\\
38	33\\
39	34\\
40	35\\
41	35\\
42	36\\
43	37\\
44	37\\
45	37\\
46	38\\
47	39\\
48	40\\
49	41\\
50	42\\
51	43\\
};
\addlegendentry{SPINAT}

\addplot [color=mycolor1, dashed, line width=2.0pt]
  table[row sep=crcr]{%
1	0\\
2	1\\
3	2\\
4	3\\
5	4\\
6	5\\
7	6\\
8	7\\
9	8\\
10	9\\
11	10\\
12	11\\
13	12\\
14	13\\
15	14\\
16	15\\
17	16\\
18	17\\
19	18\\
20	19\\
21	19\\
22	20\\
23	21\\
24	22\\
25	23\\
26	24\\
27	25\\
28	26\\
29	27\\
30	28\\
31	29\\
32	30\\
33	31\\
34	32\\
35	33\\
36	34\\
37	35\\
38	36\\
39	37\\
40	38\\
41	39\\
42	40\\
43	41\\
44	42\\
45	43\\
46	44\\
47	45\\
48	46\\
49	47\\
50	48\\
51	49\\
};
\addlegendentry{SPINAT(100)}

\end{axis}
\end{tikzpicture}%
}
\caption{Evolution of the support size for different versions of GCG.}
\label{fig:supp_size}
\end{subfigure}
\quad\quad
\begin{subfigure}[t]{.45\linewidth}
\centering
\scalebox{.55}{
%
%
\begin{tikzpicture}

\begin{axis}[%
width=4.522in,
height=1.46in,
at={(0.759in,2.567in)},
scale only axis,
xmin=-0.52,
xmax=-0.5,
ymin=-0.025,
ymax=0.2,
axis background/.style={fill=white}
]
\addplot [color=black, line width=1.0pt, forget plot]
  table[row sep=crcr]{%
-0.516165668065716	0\\
-0.516165668065716	0.130125101606283\\
};
\addplot [color=black, line width=1.0pt, forget plot]
  table[row sep=crcr]{%
-0.503873002378835	0\\
-0.503873002378835	0.0120200659706277\\
};
\addplot [color=black, line width=1.0pt, forget plot]
  table[row sep=crcr]{%
-0.508434156211127	0\\
-0.508434156211127	0.0149444809368156\\
};
\addplot [color=black, line width=1.0pt, forget plot]
  table[row sep=crcr]{%
-0.509288698402886	0\\
-0.509288698402886	0.00879807103149004\\
};
\addplot [color=black, line width=1.0pt, forget plot]
  table[row sep=crcr]{%
-0.509395401740388	0\\
-0.509395401740388	0.00731579177915526\\
};
\addplot [color=black, line width=1.0pt, forget plot]
  table[row sep=crcr]{%
-0.509389523897114	0\\
-0.509389523897114	0.00673491646222786\\
};
\addplot [color=black, line width=1.0pt, forget plot]
  table[row sep=crcr]{%
-0.509466790468309	0\\
-0.509466790468309	0.00540841518086352\\
};
\addplot [color=black, line width=1.0pt, forget plot]
  table[row sep=crcr]{%
-0.509593376539372	0\\
-0.509593376539372	0.00379816825526543\\
};
\addplot [color=black, line width=1.0pt, forget plot]
  table[row sep=crcr]{%
-0.509781081548473	0\\
-0.509781081548473	0.00160326772107496\\
};
\addplot [color=black, line width=1.0pt, forget plot]
  table[row sep=crcr]{%
-0.509829627533261	0\\
-0.509829627533261	0.00111453918356896\\
};
\addplot [color=black, dashed, line width=1.0pt, forget plot]
  table[row sep=crcr]{%
-0.52	0\\
-0.5	0\\
};
\addplot [color=black, line width=3.0pt, draw=none, mark size=1.5pt, mark=o, mark options={solid, black}, forget plot]
  table[row sep=crcr]{%
-0.516165668065716	0.130125101606283\\
-0.503873002378835	0.0120200659706277\\
-0.508434156211127	0.0149444809368156\\
-0.509288698402886	0.00879807103149004\\
-0.509395401740388	0.00731579177915526\\
-0.509389523897114	0.00673491646222786\\
-0.509466790468309	0.00540841518086352\\
-0.509593376539372	0.00379816825526543\\
-0.509781081548473	0.00160326772107496\\
-0.509829627533261	0.00111453918356896\\
};
\addplot [color=black, line width=1.0pt, draw=none, mark size=5.pt, mark=x, mark options={solid, black}, forget plot]
  table[row sep=crcr]{%
-0.512536156281331 0\\
};
\end{axis}

\begin{axis}[%
width=4.522in,
height=1.46in,
at={(0.759in,0.481in)},
scale only axis,
xmin=-0.52,
xmax=-0.5,
ymin=-0.025,
ymax=0.2,
xlabel style={font=\color{white!15!black}},
xlabel={$x$},
axis background/.style={fill=white}
]
\addplot [color=black, line width=3.0pt, draw=none, mark size=1.5pt, mark=o, mark options={solid, black}, forget plot]
  table[row sep=crcr]{%
-0.512538081609093	0.092711064042043\\
-0.512534418217991	0.0989914342557115\\
};
\addplot [color=black, line width=1.0pt, forget plot]
  table[row sep=crcr]{%
-0.512538081609093	0\\
-0.512538081609093	0.092711064042043\\
};
\addplot [color=black, line width=1.0pt, forget plot]
  table[row sep=crcr]{%
-0.512534418217991	0\\
-0.512534418217991	0.0989914342557115\\
};
\addplot [color=black, dashed, line width=1.0pt, forget plot]
  table[row sep=crcr]{%
-0.52	0\\
-0.5	0\\
};
\addplot [color=black, line width=1.0pt, draw=none, mark size=5.pt, mark=x, mark options={solid, black}, forget plot]
  table[row sep=crcr]{%
-0.512536156281331 0\\
};
\end{axis}
\end{tikzpicture}%
}
\caption{Measures computed by different algorithms; zoom on the cluster around
  \(\bar{x}_1 = -0.5125\) (marked by a cross):
  SPINAT(\(100\)) (top) and PDAP (bottom).}
\label{fig:supp_plot}
\end{subfigure}
\caption{Evolution of the support for different algorithms.}
\end{figure}
Clearly, GCG inserts a new point in every
iteration, which means that the
support size is proportional to the iteration counter.
PDAP behaves almost ideally, since
the number of points is bounded by a number that is only twice the number of support
points of the true source. The versions of SPINAT are also able to eliminate some support
points, but only in later iterations and they do not achieve a meaningful reduction. For
all of the methods, we observe a clustering of sources around the optimal location, but as
the zoomed in plot from Figure~\ref{fig:supp_plot} shows, PDAP produces a cluster of only
two points at high accuracy, whereas SPINAT(100), albeit delivering the smallest residual
out of the GCG and SPINAT experiments, has a large cluster of points at substantial distance from the optimal location.
While for GCG this
behavior is to be expected, it may appear surprising that \(100\) proximal gradient iterations on
the current support are not sufficient to move enough ``mass'' of the coefficients to the
improved location points inserted in more recent iterations. This stems from the fact that
the mapping \((\bd{u}_i)_{i=1,\ldots,N^k} \mapsto
\left(\kernel(x_i^k,\bd{u}_i)\right)_{i=1,\ldots,N^k}\) is increasingly ill-conditioned
the more multiple \(x_i^k \in \mathcal{A}_k\) cluster around the same point \(\bar{x}_n\), and this adversely affects the
convergence of the proximal gradient method.
This highlights the benefit of employing second order optimization methods for the subproblems
of~\eqref{eq:subprobpdap}, which are not affected as much by this ill-conditioning. In
particular, for the given implementation using semismooth Newton methods, new support
points are inserted at vastly improved locations due to the improved descent in the
functional in the previous iteration and old support
points at locations far from the optimum can be eliminated reliably.

\bibliographystyle{siam}
\bibliography{Diss}

\end{document}